\documentclass[final]{siamltex}

\usepackage{latexsym, amssymb, amsmath}
\usepackage{cancel}
\usepackage{graphicx}
\usepackage{mathtools}
\usepackage{float}
\usepackage{chngcntr}
\usepackage{moreverb}
\usepackage{mathdots}
\usepackage{color}
\usepackage{enumitem}

\newtheorem{example}[theorem]{Example}
\let\oldexample\example
\renewcommand{\example}{\oldexample\normalfont}
\newtheorem{remark}[theorem]{Remark}
\let\oldremark\remark
\renewcommand{\remark}{\oldremark\normalfont}
\usepackage{mathdots}

\title{Conditioning  and backward errors of eigenvalues of homogeneous matrix polynomials \\ under M\"{o}bius transformations\thanks{{ This work was partially supported by the Ministerio de Econom\'ia, Industria y Competitividad (MINECO) of Spain through grants MTM2012-32542, MTM2015-65798-P, and MTM2017-90682-REDT. The research of L.M. Anguas is funded by the ``contrato predoctoral'' BES-2013-065688 of MINECO.}}}

\author{Luis Miguel  Anguas\thanks{Departamento de Matem\'aticas, Universidad Carlos III de Madrid,
		Avda.\ Universidad 30, 28911 Legan\'es, Spain ({\tt languas@math.uc3m.es})} \and Maria Isabel Bueno\thanks{Department of Mathematics and College of Creative Studies,
		University of California, Santa Barbara, CA 93106, USA ({\tt mbueno@math.ucsb.edu}){ .}} \and Froil\'an M.  Dopico\thanks{Departamento de Matem\'aticas, Universidad Carlos III de Madrid,
		Avda.\ Universidad 30, 28911 Legan\'es, Spain ({\tt dopico@math.uc3m.es})}}

\begin{document}
\maketitle
\begin{abstract}
	M\"{o}bius transformations have been used in numerical algorithms for computing eigenvalues and invariant subspaces of structured generalized and polynomial eigenvalue problems (PEPs). These transformations convert problems with certain structures arising in applications into problems with other structures and whose eigenvalues and invariant subspaces are easily related to the ones of the original problem. Thus, an algorithm that is efficient and stable for some particular structure can be used for solving efficiently another type of structured problem via an adequate M\"{o}bius transformation. A key question in this context is whether these transformations may change significantly the conditioning of the problem and the backward errors of the computed solutions, since, in that case, their use might lead to unreliable results. We present the first general study on the effect of M\"{o}bius transformations on the eigenvalue condition numbers and backward errors of approximate eigenpairs of PEPs. By using the homogeneous formulation of PEPs, we are able to obtain two clear and simple results. First, we show that, if the matrix inducing the M\"{o}bius transformation is well conditioned, then such transformation approximately preserves the eigenvalue condition numbers and backward errors when they are defined with respect to perturbations of the matrix polynomial which are small relative to the norm of the whole polynomial. However, if 
	the perturbations in each coefficient of the matrix polynomial are small relative to the norm of that coefficient, then the corresponding eigenvalue condition numbers and backward errors are preserved approximately by the M\"{o}bius transformations induced by well-conditioned matrices only if a penalty factor, depending on the norms of those matrix coefficients, is moderate. It is important to note that these simple results are no longer true if a non-homogeneous formulation of the PEP is used.
	
\end{abstract}

\begin{keywords} 
	backward error, eigenvalue condition number, matrix polynomial, M\"{o}bius transformation, polynomial eigenvalue problem
\end{keywords}

\begin{AMS}
	65F15, 65F35, 15A18, 15A22
\end{AMS}
\section{Introduction}\label{sect.intro} 
M\"{o}bius transformations are a standard tool in the theory of matrix polynomials and in their applications. The use of M\"{o}bius transformations of matrix polynomials can be traced back to  at least   \cite{MCM1,MCM2}, where they are defined for general rational matrices which are not necessarily polynomials. Since M\"{o}bius transformations change the eigenvalues of a matrix polynomial in a simple way and preserve most of the properties of the polynomial \cite{Mobius}, they have often been used  to transform a matrix polynomial with infinite eigenvalues into another polynomial with only finite eigenvalues and for which a certain problem can be solved more easily. Recent examples of this theoretical use can be found, for instance, in \cite{dtdopvd2015,TisZab}.

A fundamental property of some M\"{o}bius transformations, called Cayley transformations, is to convert matrix polynomials with certain structures arising in control applications into matrix polynomials with other structures that also arise in applications. This allows to translate many properties from one structured class of matrix polynomials into another. The origins of these results on structured problems are found in classical group theory, where Cayley transformations are used, for instance, to transform Hamiltonian into symplectic matrices and vice versa \cite{weyl}. Such results were extended to Hamiltonian and symplectic matrix pencils, i.e., matrix polynomials of degree one, in \cite{Mehr91,cayley} (with the goal of relating discrete and continuous control problems) and generalized to several classes of structured matrix polynomials of degree larger than one in \cite{good}. A thorough treatment of  the properties of M\"{o}bius transformations of matrix polynomials is presented in a unified way in \cite{Mobius}.

The Cayley transformations mentioned in the previous paragraph are not just of theoretical interest, since they, and some variants, have been used explicitly in a number of important numerical algorithms for eigenvalue problems. Some examples are: \cite[Algorithm 4.1]{benneretal}, where they are used for computing the eigenvalues of a sympletic pencil by transforming such pencil into a Hamiltonian pencil, and then using a structured eigenvalue algorithm for Hamiltonian pencils; \cite[Sec. 3.2]{MehrPol}, where they are used to transform a matrix pencil into another one so that the eigenvalues in the left-half plane of the original pencil are moved into the unit disk, an operation that is a preprocessing before applying an inverse-free disk function method for computing certain stable/un-stable deflating subspaces of the matrix pencil; and \cite[Sec. 6]{MehrXu}, where they are used for transforming palindromic/anti-palindromic pencils into even/odd pencils with the goal of deflating the $\pm 1$ eigenvalues  of the palindromic/anti-palindromic pencils via algorithms for deflating the infinite eigenvalues of the even/odd pencils. Other examples can be found in the literature, although, sometimes, the use of the Cayley transformations is not mentioned explicitly. For instance, the algorithm in \cite{byers} for computing the structured staircase form of skew-symmetric/symmetric pencils can be used via a Cayley transformation and its inverse for computing a structured staircase form of palindromic pencils, although this is not mentioned in \cite{byers}.

The numerical use of M\"{o}bius transformations in structured algorithms for pencils discussed in the previous paragraph can be extended to matrix polynomials of degree larger than one. Assume that a structured matrix polynomial $P$ is given and we want to solve the corresponding polynomial eigenvalue problem (PEP). Then, the standard procedure is to consider  one of the (strong) linearizations $L$ of $P$ of the same structure  available in the literature (see, for instance, \cite{buenoetal,dtdomack,good}), assuming it exists. Assume also that a backward stable structured algorithm is available for a certain type of structured pencils and that $L$ can be transformed into a pencil with such structure through a M\"obius transformation, $M_A$. By \cite[Corollary 8.6]{Mobius}, $M_A(L)$ is a (strong) linearization of $M_A(P)$. However, even if the structured algorithm guarantees that  the PEP associated with $M_A(P)$ is solved in a backward stable way \cite{dopevd}, it is not guaranteed that it solves the PEP associated with $P$ in a backward stable way as well. Thus, a direct way of checking if this is the case is to analyze how the M\"obius transformation affects the backward errors of the computed eigenpairs of the polynomial $P$.

As illustrated in the previous paragraph, when the numerical solution of a problem is obtained by transforming the problem into another one, a fundamental question is whether or not such transformation deteriorates the conditioning of the problem and/or the backward errors of the approximate solutions, because a significant deterioration of such quantities may lead to unreliable solutions. We have not found in the literature any  analysis of this kind concerning the use of M\"{o}bius transformations for solving PEPs, apart from a few vague comments in some papers. The results in this paper are a first step in this direction. More specifically, we present the first general study on the effect of M\"{o}bius transformations on the eigenvalue condition numbers and backward errors of approximate eigenpairs of PEPs. We are aware that this analysis does not cover all the numerical applications of M\"{o}bius transformations that can be found in the literature, since, for instance, the effect on the conditioning of the deflating subspaces of pencils is not covered in our study. For brevity, this and other related problems will be considered in future works.

In this paper, the PEP is formulated in homogeneous form \cite{Ded-first,DedTis2003,Stewart} and the corresponding homogeneous eigenvalue condition numbers \cite{DedTis2003,comparison} and backward errors \cite{HigLiTis} are used. This homogeneous formulation has clear mathematical advantages  over the standard non-homogeneous one \cite{DedTis2003,comparison} and has been used recently in the analysis of algorithms for solving PEPs via linearizations \cite{hamarling, HigMacTis}. In addition, when a PEP is solved by applying the QZ algorithm to a linearization of the corresponding matrix polynomial, the computed eigenvalues are, in fact, the homogeneous eigenvalues. Note that the non-homogeneous eigenvalues are obtained from the homogeneous ones  by the division of its two components,  and this operation is performed only after the algorithm QZ has converged. The analysis of the effect of M\"{o}bius transformations on the eigenvalue condition numbers of non-homogeneous matrix polynomials is postponed to a future paper for brevity, but also because it is cumbersome since it requires to distinguish several cases. Such complications are related to the fact that, for any M\"{o}bius transformation, it is possible to find matrix polynomials for which the modulus of some of its non-homogenous eigenvalues changes wildly under the transformation. This fact has led to the popular belief that any M\"{o}bius transformation affects dramatically the conditioning of certain critical eigenvalues, something that is not true in the homogeneous formulation. 

By using the homogeneous formulation of PEPs, we are able to obtain, among many others, two clear and simple results that are highlighted in the next lines. First, we show in Theorems \ref{main-homo1} and \ref{eta-bounds} that, if the matrix inducing the M\"{o}bius transformation is well conditioned, then, for any matrix polynomial and simple eigenvalue, such transformation approximately preserves the eigenvalue condition numbers and backward errors when, in the definition of these magnitudes,  small perturbations of the matrix polynomial relative to the norm of the whole polynomial are considered. However, if we consider condition numbers and backward errors for which 
the perturbations in each coefficient of the matrix polynomial are small relative to the norm of that coefficient, then these magnitudes are approximately preserved  by the M\"{o}bius transformations induced by well-conditioned matrices only if a penalty factor, depending on the norms of the coefficients of the polynomial, is moderate. This is proven in Theorems \ref{main-homo} and \ref{eta-bounds}.

The paper is organized as follows. In Section \ref{sec-notation}, we introduce some notation and basic definitions about matrix polynomials. Section \ref{Sec-Mobius} contains results about M\"{o}bius transformations of homogeneous matrix polynomials. Most of such results are well-known, but the ones in Subsection \ref{subsec.mobius} are new, as far as we know. In Section \ref{cond-back-sect}, we recall the definitions and expressions of eigenvalue condition numbers and backward errors of PEPs. Sections \ref{SecHom} and \ref{sec.backwerrors}
include the most important results proven in this paper about the effect of M\"{o}bius transformations on eigenvalue condition numbers and backward errors of approximate eigenpairs of PEPs. Numerical experiments that illustrate the theoretical results in previous sections are described in Section \ref{sec:num}. Finally, Section \ref{sec.conclusions} discusses the conclusions and some lines of future research.

%%%%%%%%%%%%%%%%%%%%%%%%%%%

\section{Notation and basic definitions} \label{sec-notation}
To begin with, let us introduce some general notation that will be used throughout this paper. Given positive integers $a$ and $b$, we define
$$a:b := \left \{ \begin{array}{ll} a, a+1, \ldots, b, & \textrm{if $a\leq b$,} \\ \emptyset, & \textrm{if $a> b$.}\end{array} \right.$$
For any real number $\alpha$, $\lfloor \alpha \rfloor$ denotes the largest integer smaller than or equal to $\alpha$. The field of complex numbers is denoted by $\mathbb{C}$.
For any complex vector $x=[x_1, \ldots, x_n]^T \in \mathbb{C}^n$, $\|x\|_p$ denotes its $p$-norm, i.e., $\|x\|_p := (\sum_{i=1}^n |x_i|^p)^{1/p}$, for $1 \leq p < \infty$. We also denote $\|x\|_{\infty}:=\max_{i=1:n}\{ |x_i|\}$. For any complex matrix $A\in\mathbb{C}^{m\times n}$, $\|A\|_2$ denotes its spectral or 2-norm, that is, its largest singular value;  
$\|A\|_\infty$ denotes its $\infty$-norm, that is, the maximum row sum of the moduli of its entries;  and $\|A\|_1$ denotes its 1-norm, that is, the maximum column sum of the moduli of its entries. Additionally,  $\|A\|_M:=\max\{|A_{ij}|, i=1:m, j=1:n\}$ denotes the max norm of $A$.

Let us present now some basic concepts that will be used frequently in this paper.

A matrix polynomial $P(\alpha,\beta)$ is said to be a \emph{homogeneous matrix polynomial of degree $k$}  if it is of the form
\begin{equation}\label{homo-pol}
P(\alpha, \beta)=\sum_{i=0}^k \alpha^i \beta^{k-i} B_i,\quad B_i\in\mathbb{C}^{m\times n}{ ,}
\end{equation}
where all the matrix coefficients $B_i$ but one are allowed to be zero. If all matrix coefficients $B_i$ are zero, then we say that $P$ has degree $-\infty$ or it is undefined.  If $m=n$ and the determinant of $P(\alpha,\beta)$ is not identically equal to zero, $P$ is said to be \emph{regular}. Otherwise, it is said to be \emph{singular}. 

Given a regular homogeneous matrix polynomial $P(\alpha,\beta)$, the \emph{(homogeneous) polynomial eigenvalue problem} (PEP) associated to $P(\alpha, \beta)$ consists of finding pairs of scalars $(\alpha_0,\beta_0)\neq (0,0)$ and nonzero vectors $x,y\in\mathbb{C}^{n}$ such that
\begin{equation}
\label{homo-PEP}
y^*P(\alpha_0,\beta_0)=0 \quad \text{and}\quad P(\alpha_0,\beta_0)x=0.
\end{equation}
{The pair} $(\alpha_0, \beta_0)$ is called an \emph{eigenvalue} of $P(\alpha, \beta)$, the vectors $x$ and $y$ are called, respectively, a \emph{right and a left eigenvector} of $P(\alpha, \beta)$ associated with $(\alpha_0, \beta_0)$, and the pairs $(x, (\alpha_0, \beta_0))$ and $(y^*, (\alpha_0, \beta_0))$ are called, respectively, a right and a left eigenpair of $P(\alpha, \beta)$. 

Note that, {for any complex number $a\neq 0$}, the pair  $ (a \alpha_0, a\beta_0)$ is an eigenvalue of $P(\alpha, \beta)$ if and only if $(\alpha_0, \beta_0)$ is an eigenvalue of $P(\alpha, \beta)$. Thus, an eigenvalue of a  matrix polynomial $P(\alpha, \beta)$ can be seen as a line in $\mathbb{C}^2$ passing through the origin whose points are solutions to the equation $\textrm{det}(P(\alpha, \beta))=0$. Throughout the paper, we denote eigenvalues, i.e., lines, as pairs $(\alpha_0, \beta_0)$ and a specific (nonzero) representative of this eigenvalue, i.e., a specific (nonzero) point on the line $(\alpha_0, \beta_0)$ in $\mathbb{C}^2$, by $[\alpha_0, \beta_0]^T$. We will also use the notation  $\langle x \rangle$, where $x\in \mathbb{C}^2$,  to denote the line generated by the vector $x$ in $\mathbb{C}^2$ through scalar multiplication. In particular, $\langle [\alpha_0, \beta_0]^T \rangle = (\alpha_0, \beta_0).$
%as pairs $(\alpha_0, \beta_0)$ and a specific representative of this eigenvalue{, i.e., a specific point on the line defined by $(\alpha_0, \beta_0)$ in $\mathbb{C}^2$,} by $[\alpha_0, \beta_0]$. 
Notice that all representatives of an eigenvalue of $P(\alpha, \beta)$ are nonzero scalar multiples of each other. 

In future sections, we will need to calculate the norm of  a homogeneous matrix polynomial $P(\alpha, \beta)$ as in \eqref{homo-pol}.  In this paper we will use the norm $\|P\|_\infty:=\max_{i=0:k}\{\|B_i\|_2\}$.

Some of the results for homogeneous matrix polynomials that will be introduced in Section \ref{Sec-Mobius} were proven in the literature for \emph{non-homogeneous matrix polynomials}, that is, matrix polynomials {written in} the form
\begin{equation}
\label{nonhom-pol}
P(\lambda)=\sum_{i=0}^k\lambda^i B_i, \quad B_i\in\mathbb{C}^{m\times n}.
\end{equation}
{To extend} those results to homogeneous matrix polynomials it will be enough to notice that a homogeneous matrix polynomial $P(\alpha,\beta)$ can be rewritten in { non-homogeneous} form as follows
\begin{equation}
\label{rel-hom-non}
P(\alpha, \beta)= \left \{ \begin{array}{ll} \beta^k P(\alpha/\beta), & \textrm{if $\beta\neq 0$,}\\
\alpha^k B_k, & \textrm{if $\beta=0$.} \end{array} \right.
\end{equation}

When $n=m$, we say that a non-homogeneous matrix polynomial $P(\lambda)$ is regular if its determinant is not identically zero. In this case, we can consider the non-homogeneous PEP associated to  $P(\lambda)$. As in the homogeneous case, it consists of  finding scalars $\lambda_0$ and nonzero vectors $x$ and $y$ { such that $y^*P(\lambda_0)=0$ and $P(\lambda_0)x=0.$
	The vectors $x$ and $y$ are said to be right} and left eigenvectors of $P(\lambda)$ associated with the eigenvalue $\lambda_0$, and the pairs $(x, \lambda_0)$ and $(y^*, \lambda_0)$ are called, respectively, a right and a left eigenpair of $P(\lambda)$. 

The next lemma provides a relationship between the eigenvalues and eigenvectors of a matrix polynomial when expressed in homogeneous and non-homogeneous { forms}.  We omit its proof because it is straightforward.
\begin{lemma}
	\label{rel-hom-nonhom-PEP}
	A pair $(x, (\alpha_0, \beta_0))$ (resp. $(y^*, (\alpha_0, \beta_0) )$) is a right (resp. left) eigenpair for a { regular} homogeneous matrix polynomial $P(\alpha, \beta)$ if and only if $(x, \lambda_0)$ (resp. $(y^*, \lambda_0))$ is a right (resp. left) eigenpair for the same polynomial when expressed in non-homogeneous form,  where $\lambda_0=\alpha_0/\beta_0$ if $\beta_0 \neq 0$ and  $\lambda_0=\infty$ if $\beta_0=0$.
\end{lemma}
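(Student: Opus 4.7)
The plan is to unwind the definitions using the identity \eqref{rel-hom-non}, which converts the homogeneous form of $P$ to its non-homogeneous form, and then case-split on whether $\beta_0 = 0$. I will write out the argument only for the right eigenpair statement; the statement for left eigenpairs follows by repeating the reasoning with $y^*P(\alpha_0,\beta_0) = 0$ in place of $P(\alpha_0,\beta_0)x = 0$.

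First, I would fix a representative $[\alpha_0,\beta_0]^T$ of the eigenvalue and consider the case $\beta_0 \neq 0$. By \eqref{rel-hom-non}, $P(\alpha_0,\beta_0) = \beta_0^k\,P(\alpha_0/\beta_0)$, and since $\beta_0^k \neq 0$, multiplying by $x$ shows that $P(\alpha_0,\beta_0)x = 0$ is equivalent to $P(\lambda_0)x = 0$ with $\lambda_0 = \alpha_0/\beta_0$. This gives precisely the claimed equivalence in the finite case. Moreover, since eigenpairs are defined up to a nonzero rescaling of the representative, the conclusion does not depend on the particular choice of $[\alpha_0,\beta_0]^T$.

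Next I would handle the case $\beta_0 = 0$. Because an eigenvalue cannot have $(\alpha_0,\beta_0) = (0,0)$, necessarily $\alpha_0 \neq 0$. Again by \eqref{rel-hom-non}, $P(\alpha_0,0) = \alpha_0^k B_k$, so $P(\alpha_0,0)x = 0$ if and only if $B_k x = 0$. By the standard convention extending the non-homogeneous definition to infinity, $(x,\infty)$ is a right eigenpair of $P(\lambda)$ precisely when $B_k x = 0$ (equivalently, when $x$ is a right eigenvector of the reversal polynomial $\lambda^k P(1/\lambda)$ at $\lambda = 0$), which yields the equivalence in the infinite case.

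The only point requiring any care, rather than being a genuine obstacle, is the convention for an eigenpair at $\lambda_0 = \infty$, since the paper only states the non-homogeneous definition for finite $\lambda_0$. Once that convention is fixed through the leading coefficient (or, equivalently, the reversal), the proof reduces to the two algebraic identities above, which is why the authors were content to omit it.
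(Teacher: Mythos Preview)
Your proposal is correct and is exactly the straightforward argument the authors had in mind: the paper omits the proof entirely, stating only that it is immediate, and your use of \eqref{rel-hom-non} together with the case split on $\beta_0$ is precisely the intended unwinding of definitions. Your remark about the convention for $\lambda_0=\infty$ is the only point requiring comment, and you handle it appropriately.
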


%%%%%%%%%%%%%%%%%%%%%%%%%%	
\section{M\"{o}bius transformations of homogeneous matrix polynomials}
\label{Sec-Mobius}  
Before introducing the definition of M\"obius transformation of matrix polynomials, we present some notation that will be used in this section. We denote by $GL(2, \mathbb{C})$ the set of nonsingular $2\times 2$ matrices with complex entries and by $\mathbb{C}[\alpha, \beta]_k^{m\times n}$ the vector space of $m\times n$ homogeneous matrix polynomials of degree $k$ whose matrix coefficients have complex entries together with the zero polynomial, that is, polynomials of the form (\ref{homo-pol}) whose matrix coefficients are allowed to be all zero.

Next we introduce the concept of M\"{o}bius transformation.
\begin{definition}\cite{Mobius}
	Let $A=\left[ \begin{array}{cc} a & b \\ c & d \end{array} \right] \in GL(2,\mathbb{C})$. Then the M\"{o}bius transformation on $\mathbb{C}[\alpha, \beta]_k^{m\times n}$ induced by $A$ is the map $M_A: \mathbb{C}[\alpha, \beta]^{m\times n}_{k} \rightarrow \mathbb{C}[\alpha, \beta]^{m\times n}_{k}$ given   by
	\begin{equation}\label{mobiusP}
	M_A\left(\sum_{i=0}^k \alpha^i \beta^{k-i} B_i\right)(\gamma, \delta)=\sum_{i=0}^k(a\gamma+b\delta)^i(c\gamma+d\delta)^{k-i}B_i.
	\end{equation}
	We call M\"{o}bius transform of $P(\alpha,\beta)$ under $M_A$ to the matrix polynomial $M_A(P)(\gamma,\delta)$, that is, the image of $P(\alpha, \beta)$  under $M_A$.
\end{definition}

It is important to highlight that the M\"{o}bius {transform} of a homogeneous matrix polynomial $P$ of degree $k$ is another homogeneous matrix polynomial of the same degree.  %However, $M_A(P)$ and $P$ do not necessarily have the same degree. 

The next example shows that the well-known reversal of a {matrix polynomial $P(\alpha, \beta)$ \cite{Mobius} can be seen as a M\"obius transform} of $P$. 

\begin{example}
	\label{ex-reversal}
	Let us consider the M\"obius transformation induced by the matrix $R=\left[ \begin{array}{cc} 0 & 1 \\ 1 & 0 \end{array} \right].$ Given $P(\alpha, \beta)=\sum_{i=0}^k \alpha^i \beta^{k-i}B_i$, we have
	$$M_R(P)(\gamma, \delta)= \sum_{i=0}^k  \gamma^{k-i} \delta^{i} B_i = \textrm{rev}\; P (\gamma, \delta).$$ 
	%We notice that $M_R(P)$ is associated with the simple M\"obius function $m_A(\mu)=\frac{1}{\mu}$, that is, with an inversion. 
\end{example}

%The previous example motivates the following definition.
%\begin{definition}\label{simpleMobius}
%We call \emph{simple M\"obius transformations} to any M\"obius transformation induced by a matrix associated with a simple M\"obius function.
%\end{definition}

In the next definition, we introduce some M\"obius transformations that  are useful in converting some types of structured matrix polynomials into others { \cite{lan-rod, good, Mobius, cayley}.}

\begin{definition}\label{cayley}
	The M\"{o}bius transformations induced by the matrices
	\begin{equation}\label{cayley-trans}
	A_{+1}=\left[ \begin{array}{rr} 1 & 1 \\ -1 & 1 \end{array} \right ], \quad A_{-1}=\left[ \begin{array}{rr} 1 & -1 \\ 1 & 1 \end{array} \right]
	\end{equation}
	are called Cayley transformations. %In particular,  if $P(\lambda)=\sum_{i=0}^k \lambda^i B_i$, then
	%$$M_{A_{+1}}(P)(\mu) = \sum_{i=0}^k (\mu+1)^i (1-\mu)^{k-i} B_i,$$
	%and 
	%$$M_{A_{-1}}(P)(\mu)=\sum_{i=0}^k (1-\mu)^i (\mu+1)^{k-i} B_i.$$
\end{definition}

%%%%%%%%%%%%%%%%%%%%%%%%%%

\subsection{Properties of M\"obius transformations}

In this section, we present some properties of the M\"obius transformations that were proven in \cite{Mobius} for  non-homogeneous matrix polynomials, that is, matrix polynomials of the form \eqref{nonhom-pol}.  The proof of the equivalent statement of those properties for homogeneous polynomials follows immediately from the results in \cite{Mobius} and the relationship \eqref{rel-hom-non} between the homogeneous and non-homogeneous expressions of the same matrix polynomial.

\begin{proposition}{\rm\cite[Proposition 3.5]{Mobius}}
	For any $A\in GL(2,\mathbb{C})$, $M_A$ is a $\mathbb{C}$-linear operator on the vector space $\mathbb{C}[\alpha, \beta]_k^{m\times n}$, that is, for any $\mu\in\mathbb{C}$ and $P,Q\in \mathbb{C}[\alpha, \beta]_k^{m\times n}$,
	\begin{center}
		$M_A(P+Q)=M_A(P)+M_A(Q)$ \ \text{and} \ $M_A(\mu P)=\mu M_A(P)$.
	\end{center}
	
\end{proposition}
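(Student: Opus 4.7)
The plan is to verify both linearity properties by direct computation from the defining formula \eqref{mobiusP}, exploiting the fact that every element of $\mathbb{C}[\alpha,\beta]_k^{m\times n}$ is written using the same exponent set $\{(i,k-i):0\le i\le k\}$, so that addition and scalar multiplication on this space act coefficient-wise on the matrix coefficients $B_i$.

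First I would expand $P(\alpha,\beta)=\sum_{i=0}^{k}\alpha^i\beta^{k-i}B_i$ and $Q(\alpha,\beta)=\sum_{i=0}^{k}\alpha^i\beta^{k-i}C_i$, so that $P+Q$ has matrix coefficients $B_i+C_i$ in the same common indexing. Applying \eqref{mobiusP} then gives the $i$-th summand of $M_A(P+Q)(\gamma,\delta)$ as $(a\gamma+b\delta)^i(c\gamma+d\delta)^{k-i}(B_i+C_i)$; distributing the scalar polynomial factor over the matrix sum and splitting the outer sum into two yields $M_A(P)(\gamma,\delta)+M_A(Q)(\gamma,\delta)$ term by term. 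The scalar-homogeneity identity $M_A(\mu P)=\mu M_A(P)$ follows from exactly the same template, since $\mu P$ has matrix coefficients $\mu B_i$ and the complex scalar $\mu$ can be pulled out of every summand in \eqref{mobiusP}.

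An alternative route, if one prefers to reuse the non-homogeneous statement \cite[Proposition 3.5]{Mobius}, would be to dehomogenize via the identity \eqref{rel-hom-non}, invoke the already-proved $\mathbb{C}$-linearity of the non-homogeneous M\"obius transform, and then rehomogenize, but this adds unnecessary case splitting at $\beta=0$, so I would stick with the direct argument. There is essentially no obstacle here: the only subtlety worth making explicit is the convention that all polynomials in $\mathbb{C}[\alpha,\beta]_k^{m\times n}$ share the full exponent tuple $i=0,\ldots,k$ with possibly zero coefficients, which is exactly what makes coefficient-wise addition well-defined and guarantees that the common scalar factor $(a\gamma+b\delta)^i(c\gamma+d\delta)^{k-i}$ commutes past the matrix coefficient in each summand.
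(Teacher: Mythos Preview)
Your direct verification from the defining formula \eqref{mobiusP} is correct and complete; the coefficient-wise argument you give is exactly the right one, and the remark about the shared exponent set making addition well-defined addresses the only point that needs care.

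The paper does not actually prove this proposition: it simply cites \cite[Proposition 3.5]{Mobius} and remarks, in the preamble to the subsection, that the homogeneous version follows from the non-homogeneous one via the identity \eqref{rel-hom-non}. That is precisely the alternative route you describe and then set aside. Your direct computation is more self-contained and avoids the $\beta=0$ case split, so it is arguably preferable for a statement this elementary; the paper's route has the advantage of not reproving anything already in \cite{Mobius}.
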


\begin{proposition}\label{prop-mobius}{\rm\cite[Theorem 3.18 and Proposition 3.27]{Mobius}}
	Let  $A, B \in GL(2, \mathbb{C})$ and let $I_2$ denote the $2 \times 2$ identity matrix. Then, when  the M\"obius transformations are seen as operators on $\mathbb{C}[\alpha, \beta]^{m\times n}_k$,  the following properties hold:
	\begin{enumerate}
		\item $M_{I_2}$ is the identity operator; 
		\item $M_A \circ M_B = M_{BA}$;
		\item $(M_A)^{-1}=M_{A^{-1}}$;
		\item $M_{\mu A} = \mu^k M_A$, for any nonzero $\mu \in \mathbb{C}$; \item {If $m=n$, then  $\mathrm{det}(M_A(P)) = M_A(\mathrm{det}(P))$, where the M\"obius transformation on the right-hand side operates on $\mathbb{C}[\alpha, \beta]^{1\times 1}_{nk}$.}
	\end{enumerate}
\end{proposition}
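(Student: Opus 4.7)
My plan is to prove all five items directly from a substitution-based restatement of the definition, exploiting the homogeneity of $P$. The key observation is that \eqref{mobiusP} is equivalent to
\begin{equation*}
M_A(P)(\gamma, \delta) \;=\; P(a\gamma + b\delta,\; c\gamma + d\delta),
\end{equation*}
that is, $M_A(P)$ is obtained from $P$ by the linear change of variables $[\alpha,\beta]^T = A\,[\gamma,\delta]^T$. This is immediate by substituting $\alpha = a\gamma + b\delta$ and $\beta = c\gamma + d\delta$ into $\sum_{i=0}^{k} \alpha^i \beta^{k-i} B_i$ and comparing term by term with the right-hand side of \eqref{mobiusP}. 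Once this viewpoint is established, items 1 and 4 follow at once: taking $A = I_2$ reproduces $P$ verbatim, while replacing $A$ by $\mu A$ multiplies each factor $(a\gamma + b\delta)^i$ and $(c\gamma + d\delta)^{k-i}$ by $\mu^i$ and $\mu^{k-i}$, respectively, yielding a common factor $\mu^{k}$ in every summand.

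For item 2, I would compose two such substitutions. Applying $M_B$ first replaces the variables of $P$ by the entries of $B\,[\alpha',\beta']^T$, producing a polynomial in the intermediate pair $(\alpha',\beta')$; applying $M_A$ afterwards replaces $(\alpha',\beta')$ by $A\,[\gamma,\delta]^T$. Composing the two linear maps on the argument produces $(BA)\,[\gamma,\delta]^T$, so the result coincides with $M_{BA}(P)(\gamma,\delta)$. The only subtle point is the order of the product: the outermost substitution is the one performed \emph{last}, which forces $BA$ rather than $AB$ on the right-hand side. Item 3 is then a one-line corollary of items 1 and 2: $M_A \circ M_{A^{-1}} = M_{A^{-1}A} = M_{I_2}$, and the same identity holds with the factors swapped, so $(M_A)^{-1} = M_{A^{-1}}$.

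For item 5, I would use that the determinant commutes with polynomial substitution. Since $\det(P(\alpha,\beta))$ is a scalar homogeneous polynomial of degree $nk$ in $(\alpha,\beta)$, evaluating the matrix $M_A(P)(\gamma,\delta) = P(a\gamma+b\delta,\, c\gamma+d\delta)$ and taking its determinant yields the same scalar as substituting $(a\gamma+b\delta,\, c\gamma+d\delta)$ directly into $\det(P)$; the latter is, by definition, $M_A(\det P)(\gamma,\delta)$, where $M_A$ is now viewed as an operator on $\mathbb{C}[\alpha,\beta]^{1\times 1}_{nk}$. I expect the only real pitfall in the whole proof to be the order of composition in item 2; the other assertions reduce to bookkeeping once the substitution viewpoint is set up. As noted in the excerpt, an entirely equivalent route is to invoke the non-homogeneous versions of these identities from \cite{Mobius} and transfer them via the correspondence \eqref{rel-hom-non}, but the direct substitution argument above seems more transparent in the homogeneous setting.
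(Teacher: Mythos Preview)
Your proof is correct. The paper itself does not prove this proposition at all: it simply cites \cite[Theorem 3.18 and Proposition 3.27]{Mobius} for the non-homogeneous versions and remarks in the preamble to the subsection that the homogeneous analogues follow immediately via the correspondence \eqref{rel-hom-non}. You take a genuinely different, more self-contained route by recasting the definition \eqref{mobiusP} as the substitution $M_A(P)(\gamma,\delta) = P\bigl(A[\gamma,\delta]^T\bigr)$ and then reading off all five items directly. This buys you a proof that works natively in the homogeneous setting without ever passing through the non-homogeneous formulation, and it makes the contravariance in item~2 (the order $BA$ rather than $AB$) completely transparent as composition of linear substitutions on the argument. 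You even acknowledge the paper's citation-and-transfer route at the end, so you are aware of both approaches; your direct argument is arguably cleaner here, and there are no gaps.
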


\begin{remark}\label{regular}
	An immediate consequence of Proposition \ref{prop-mobius}(5.) is that $P$ is a regular matrix polynomial if and only if $M_A(P)$ is.
\end{remark}

{ The following result provides a connection between the eigenpairs of a regular homogeneous matrix polynomial $P(\alpha, \beta)$ and the eigenpairs of a M\"obius transform $M_A(P)(\gamma, \delta)$ of $P(\alpha, \beta)$.
	As the previous properties, this result was proven for non-homogeneous matrix polynomials in \cite{Mobius}. It is easy to see that an analogous result follows when $P$ is expressed in homogeneous form using (\ref{rel-hom-non}) and Lemma \ref{rel-hom-nonhom-PEP}.}

\begin{lemma}\label{eig-hom}{\rm\cite[Remark 6.12 and Theorem 5.3]{Mobius}}
	Let $P(\alpha, \beta)$ be a regular homogeneous matrix polynomial and let $A=\left[\begin{array}{cc} a & b \\ c & d\end{array} \right] \in GL(2, \mathbb{C})$.  If 
	$(x, (\alpha_0,\beta_0))$ (resp. $ (y^*, (\alpha_0, \beta_0 )$) is a right (resp. left) eigenpair of $P(\alpha,\beta)$, then  $(x, \langle A^{-1}[\alpha_0, \beta_0]^T\rangle)$ (resp. $(y^*, \langle A^{-1}[\alpha_0, \beta_0]^T \rangle)$)  is a right (resp. left) eigenpair of $M_A(P)(\gamma,\delta)$.
	Moreover, $(\alpha_0, \beta_0)$, as an eigenvalue of $P(\alpha, \beta)$, has the same algebraic multiplicity as $\langle A^{-1}[\alpha_0, \beta_0]^T\rangle$, when considered an eigenvalue of $M_A(P)(\gamma, \delta)$. {In particular, $(\alpha_0, \beta_0)$ is a simple eigenvalue of $P(\alpha, \beta)$ if and only if $\langle A^{-1}[\alpha_0, \beta_0]^T\rangle$  is a simple eigenvalue of $M_A(P)(\gamma, \delta)$.}
\end{lemma}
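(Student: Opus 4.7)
The plan is to exploit the basic identity
\begin{equation*}
M_A(P)(\gamma,\delta) \;=\; P\bigl(a\gamma+b\delta,\; c\gamma+d\delta\bigr),
\end{equation*}
which is immediate from the definition \eqref{mobiusP}: the substitution $(\alpha,\beta)\mapsto A[\gamma,\delta]^T$ turns $P$ into $M_A(P)$. All of the claims reduce to a change of variables via this identity. One could alternatively derive the homogeneous statement from the non-homogeneous version in \cite{Mobius} through \eqref{rel-hom-non} and Lemma \ref{rel-hom-nonhom-PEP}, as the authors point out before the lemma, but the direct route sketched here avoids having to split off the case $\beta_0=0$.

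For the eigenpair part, I would set $[\gamma_0,\delta_0]^T := A^{-1}[\alpha_0,\beta_0]^T$, which is nonzero because $A$ is invertible and $(\alpha_0,\beta_0)\neq(0,0)$. Since $A[\gamma_0,\delta_0]^T = [\alpha_0,\beta_0]^T$, the identity above gives $M_A(P)(\gamma_0,\delta_0) = P(\alpha_0,\beta_0)$. Multiplying on the right by $x$ yields $M_A(P)(\gamma_0,\delta_0)\,x = 0$, which proves the right-eigenpair claim; the left-eigenpair statement follows by multiplying on the left by $y^*$.

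For the algebraic multiplicity statement, the tool is Proposition \ref{prop-mobius}(5), which says $\det(M_A(P)) = M_A(\det(P))$. I would factor $\det(P)(\alpha,\beta) = (\beta_0\alpha - \alpha_0\beta)^m\, q(\alpha,\beta)$, where $m$ is the multiplicity of $(\alpha_0,\beta_0)$ and the scalar homogeneous polynomial $q$ does not vanish on the line $(\alpha_0,\beta_0)$. Applying $M_A$ (equivalently, the substitution above) converts this into
\begin{equation*}
\det(M_A(P))(\gamma,\delta) \;=\; L(\gamma,\delta)^m\, q\bigl(a\gamma+b\delta,\; c\gamma+d\delta\bigr),
\end{equation*}
with $L(\gamma,\delta) := \beta_0(a\gamma+b\delta) - \alpha_0(c\gamma+d\delta)$. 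A direct evaluation gives $L(\gamma_0,\delta_0) = \alpha_0\beta_0 - \alpha_0\beta_0 = 0$, so $L$ is a linear form vanishing exactly on $\langle A^{-1}[\alpha_0,\beta_0]^T\rangle$, while the remaining factor equals $q(\alpha_0,\beta_0)\neq 0$ at $(\gamma_0,\delta_0)$. Hence $\langle A^{-1}[\alpha_0,\beta_0]^T\rangle$ has multiplicity exactly $m$ as an eigenvalue of $M_A(P)(\gamma,\delta)$, and the simple-eigenvalue claim is the special case $m=1$.

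The only subtle point — and essentially the sole obstacle in an otherwise routine argument — is verifying that $L$ is genuinely nonzero, so that the displayed factorization really contributes multiplicity $m$ rather than being degenerate. Its coefficient vector is $[\beta_0,-\alpha_0]\,A$, which is nonzero because $[\beta_0,-\alpha_0]\neq 0$ and $A$ is invertible; this is the only place in the multiplicity argument where the hypothesis $A\in GL(2,\mathbb{C})$ is actively used. Everything else amounts to plugging formulas into \eqref{mobiusP}.
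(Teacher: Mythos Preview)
Your argument is correct. The paper does not actually prove this lemma: it cites \cite[Remark 6.12 and Theorem 5.3]{Mobius} for the non-homogeneous case and remarks that the homogeneous version follows from \eqref{rel-hom-non} and Lemma \ref{rel-hom-nonhom-PEP}. You explicitly acknowledge that route and opt instead for a direct proof via the substitution identity $M_A(P)(\gamma,\delta)=P(a\gamma+b\delta,c\gamma+d\delta)$, together with Proposition \ref{prop-mobius}(5) for the multiplicity. This is a genuinely different (and in fact cleaner) approach: the paper's suggested derivation forces a case split on $\beta_0=0$ versus $\beta_0\neq 0$ when passing through the non-homogeneous formulation, whereas your direct argument treats all eigenvalues uniformly. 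Your check that the linear form $L$ is nonzero (via $[\beta_0,-\alpha_0]A\neq 0$) is exactly what is needed to ensure the factorization yields multiplicity precisely $m$, and the rest is routine.
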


Motivated by the previous result, we introduce the following definition.

\begin{definition} \label{def.associated}
	Let $P(\alpha, \beta)$ be a regular homogeneous matrix polynomial and let $A=\left[\begin{array}{cc} a & b \\ c & d\end{array} \right] \in GL(2, \mathbb{C})$. Let $(\alpha_0, \beta_0)$ be an eigenvalue of $P(\alpha, \beta)$ and let $[\alpha_0, \beta_0]^T$ be a representative of $(\alpha_0, \beta_0)$. Then, we 
	call $\langle A^{-1}[\alpha_0, \beta_0]^T\rangle$  the \emph{eigenvalue of $M_A(P)$ associated with the eigenvalue $(\alpha_0, \beta_0)$ of $P(\alpha, \beta)$}
	and we call  $ A^{-1}[\alpha_0, \beta_0]^T $ the \emph{representative of the eigenvalue of $M_A(P)$  associated with $[\alpha_0, \beta_0]^T$}.
	
\end{definition}

In the following remark we explain how to compute an explicit expression for the components of the vector $A^{-1}[\alpha_0, \beta_0]^T$.

\begin{remark}
	We recall that, for $A =\left[\begin{array}{cc} a & b \\ c & d\end{array} \right]\in GL(2, \mathbb{C})$,
	\begin{equation}\label{agblfor}A^{-1}= \frac{\mathrm{adj}(A)}{\mathrm{det}(A)},\end{equation}
	where $\mathrm{adj}(A)$ denotes the  adjugate  of the matrix $A$, given by
	$$\mathrm{adj}(A):=  \left[\begin{array}{rr} d & -b \\ -c & a \end{array}\right].$$
	Thus, given a simple eigenvalue $(\alpha_0, \beta_0)$ of a homogeneous matrix polynomial $P$ and a representative $[\alpha_0, \beta_0]^T$ of $(\alpha_0, \beta_0)$,  the components of the representative $[\gamma_0, \delta_0]^T:=A^{-1}[\alpha_0, \beta_0]^T $ of the eigenvalue of $M_A(P)$ associated with $[\alpha_0, \beta_0]^T$ are given by
	\begin{equation}\label{rel-eig}
	\gamma_0:=\frac{d\alpha_0-b \beta_0}{\mathrm{det}(A)}, \quad \delta_0:=\frac{a\beta_0-c\alpha_0}{\mathrm{det}(A)}.
	\end{equation}
	
\end{remark}

%The following lemma gives a relationship between the representative $[\alpha_0, \beta_0]$ of $(\alpha_0, \beta_0)$ and the representative of $\left (\frac{d\alpha_0-b \beta_0}{det(A)}, \frac{a\beta_0-c\alpha_0}{det(A)}\right)$ associated with $[\alpha_0, \beta_0]$. {The proof is immediate and is omitted.}

%\begin{lemma}\label{rel-eig}
%???Let $P(\alpha, \beta)$ be a regular homogeneous matrix polynomial, let $A=\left[ \begin{array}{cc} a & b \\ c & d\end{array} \right] \in GL(2, \mathbb{C}),$ and let $(\alpha_0, \beta_0)$ be a simple eigenvalue of $P(\alpha, \beta)$.
%Let $[\gamma_0, \delta_0]:=\left [\frac{d\alpha_0-b \beta_0}{det(A)}, \frac{a\beta_0-c\alpha_0}{det(A)}\right ]$ be the representative of  the eigenvalue $(\gamma_0, \delta_0)$ of $M_A(P)$ associated with the representative $[\alpha_0, \beta_0]$ of $P(\alpha, \beta)$.  Then,
%\begin{equation}\label{agbl}
%\left[ \begin{array}{c}\gamma_0\\ \delta_0 \end{array} \right] = A^{-1}\left[ \begin{array}{c} \alpha_0\\ \beta_0\end{array} \right].
%\end{equation}???

%\end{lemma}

The following fact, which follows taking into account that $\|\mathrm{adj}(A)\|_{\infty} = \|A\|_1$ and $\|\mathrm{adj}(A)\|_{1} = \|A\|_{\infty}$,  will be used to simplify the bounds on the quotients of condition numbers presented  in Section \ref{SecHom}: %when $A$ is a $2\times 2$ invertible matrix.
\begin{equation}\label{quotient-norm-eig} \frac{1}{|\mathrm{det}(A)|} = \frac{\|A^{-1}\|_{\infty}}{\|A\|_{1}}= \frac{\|A^{-1}\|_{1}}{\|A\|_{\infty}}.
\end{equation}

\subsection{The matrix coefficients of the M\"obius transform of a matrix polynomial}
\label{subsec.mobius}
When comparing the condition number of an eigenvalue  of a regular homogeneous matrix polynomial $P(\alpha, \beta)$ with the condition number of the associated eigenvalue  of  the  M\"obius transform $M_A(P)$ of $P$, it will be useful to have an explicit expression for the coefficients of the matrix polynomial $M_A(P)$ in terms of the matrix coefficients of $P$, as well as an upper bound on the 2-norm of each coefficient of $M_A(P)$ in terms of the norms of the coefficients of $P$. We provide such expression and upper bound in the following proposition. 

\begin{proposition}\label{cotaMobius}
	Let $P(\alpha, \beta)=\sum_{i=0}^k \alpha^i \beta^{k-i} B_i \, {\in \mathbb{C}[\alpha , \beta]^{m\times n}_k}$, $A=\left[ \begin{array}{cc} a & b \\ c & d \end{array} \right] \in GL(2,\mathbb{C})$, and $M_A$ be the M\"{o}bius transformation induced by $A$ on $\mathbb{C}[\alpha , \beta]^{m\times n}_k$. Then, $M_A(P)(\gamma, \delta)=\sum_{\ell=0}^k \gamma^{\ell}  \delta^{k-\ell} \widetilde{B}_{\ell},$ where 
	\begin{equation}\label{Bl}
	\widetilde{B}_{\ell}=\sum_{i=0}^k\sum_{j=0}^{k-\ell} {i \choose j} {k-i \choose k-j-\ell}a^{i-j}b^jc^{j+\ell-i}d^{k-j-\ell}B_i, \quad \ell=0: k,
	\end{equation}
	and $\displaystyle {s \choose t} :=0$ for $s < t$. Moreover,
	\begin{align}
	\|\widetilde{B}_{\ell}\|_2 &  \leq \|A\|^k_{\infty} {k \choose \lfloor k/2 \rfloor} \sum_{i=0}^k\|B_i\|_2, \quad 
	\ell=0: k.\label{normBi}
	\end{align}
	
\end{proposition}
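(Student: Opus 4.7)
The plan is to obtain the explicit formula for $\widetilde{B}_\ell$ by a direct binomial expansion and then to bound $\|\widetilde{B}_\ell\|_2$ by taking absolute values and collapsing the resulting double binomial sum via Vandermonde's identity.

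First I would start from the definition
\begin{equation*}
M_A(P)(\gamma,\delta)=\sum_{i=0}^k (a\gamma+b\delta)^i (c\gamma+d\delta)^{k-i} B_i,
\end{equation*}
and expand each factor with the binomial theorem:
\begin{equation*}
(a\gamma+b\delta)^i=\sum_{j=0}^i {i\choose j} a^{i-j} b^j \gamma^{i-j}\delta^j,\qquad (c\gamma+d\delta)^{k-i}=\sum_{r=0}^{k-i} {k-i\choose r} c^{k-i-r} d^r \gamma^{k-i-r}\delta^r.
\end{equation*}
Multiplying these two expansions produces a monomial $\gamma^{(i-j)+(k-i-r)}\delta^{j+r}=\gamma^{k-j-r}\delta^{j+r}$. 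To read off the coefficient of $\gamma^\ell\delta^{k-\ell}$, I set $r=k-\ell-j$, which converts the exponent of $c$ into $k-i-r=j+\ell-i$ and the exponent of $d$ into $k-\ell-j$. Summing over $i=0:k$ and $j=0:k-\ell$, with the convention ${s\choose t}=0$ whenever $t<0$, $t>s$, or $s<0$ (so the out-of-range terms coming from the constraints $j\le i$ and $k-\ell-j\le k-i$ vanish automatically), yields exactly formula \eqref{Bl}.

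For the norm bound I would apply $\|\cdot\|_2$ inside the sum by the triangle inequality, replacing each entry of $A$ by its absolute value. Since $\|A\|_\infty=\max(|a|+|b|,\,|c|+|d|)\ge \max(|a|,|b|,|c|,|d|)$ and the four exponents $(i-j),\,j,\,(j+\ell-i),\,(k-\ell-j)$ sum to $k$, I get the uniform bound
\begin{equation*}
|a|^{i-j}|b|^{j}|c|^{j+\ell-i}|d|^{k-\ell-j}\le \|A\|_\infty^{k}.
\end{equation*}
This pulls $\|A\|_\infty^{k}$ out of the double sum, leaving
\begin{equation*}
\|\widetilde{B}_\ell\|_2\le \|A\|_\infty^{k}\sum_{i=0}^{k}\|B_i\|_2 \sum_{j=0}^{k-\ell}{i\choose j}{k-i\choose k-\ell-j}.
\end{equation*}
The key step is recognizing the inner sum: by Vandermonde's convolution identity,
\begin{equation*}
\sum_{j=0}^{k-\ell}{i\choose j}{k-i\choose k-\ell-j}={k\choose k-\ell}={k\choose \ell}\le {k\choose \lfloor k/2\rfloor}.
\end{equation*}
Substituting this bound gives \eqref{normBi}.

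The calculation is essentially pure bookkeeping; the only nonroutine step is noticing that the double binomial sum collapses by Vandermonde, which is what ultimately produces the clean central-binomial factor ${k\choose \lfloor k/2\rfloor}$ in the estimate. I would be careful to state the convention ${s\choose t}=0$ for out-of-range arguments explicitly, since it is what lets the summation ranges in \eqref{Bl} be written as $i=0:k$ and $j=0:k-\ell$ without further case distinctions.
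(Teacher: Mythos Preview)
Your proof is correct and follows essentially the same route as the paper: binomial expansion, the substitution $r=k-\ell-j$, the convention on out-of-range binomial coefficients to unify the summation limits, and then the Chu--Vandermonde identity together with ${k\choose \ell}\le{k\choose\lfloor k/2\rfloor}$ for the norm bound. The only cosmetic difference is that the paper first bounds the product of absolute values by $\|A\|_M^k$ and then uses $\|A\|_M\le\|A\|_\infty$, whereas you go directly to $\|A\|_\infty^k$; the content is identical.
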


\begin{proof}
	%Let $P(\lambda)=\sum_{i=0}^k \lambda^i B_i.$ 
	By the Binomial Theorem,  %M_A(P(\lambda))(\mu)=\sum_{i=0}^m(a\mu+b)^i(c\mu +d)^{m-i}B_i$.\\
	$$(a\gamma+b\delta)^i=\sum_{j=0}^i \left(\begin{array}{c}i\\ j\end{array}\right)(a\gamma)^{i-j}(b\delta)^j, \quad 
	(c\gamma +d\delta)^{k-i}=\sum_{r=0}^{k-i}\left(\begin{array}{c}k-i\\ r\end{array}\right)(c\gamma)^{k-i-r}(d\delta)^r.$$
	Thus, from (\ref{mobiusP}) we get
	\begin{align*}
	M_A(P)(\gamma, \delta)&= \sum_{i=0}^k \sum_{j=0}^i\sum_{r=0}^{k-i}{i \choose j}{k-i \choose r}\gamma^{k-j-r}\delta^{j+r}a^{i-j}b^jc^{k-i-r}d^rB_i\\ 
	&=
	\sum_{i=0}^k\sum_{j=0}^i\sum_{\ell=i-j}^{k-j}{i\choose j}{k-i\choose k-j-\ell}\gamma^{\ell}\delta^{k-\ell} a^{i-j}b^jc^{j+\ell-i}d^{k-j-\ell}B_i \\
	& = \sum_{\ell=0}^k \sum_{i=0}^k \sum_{j=\max\{0, i-\ell\}}^{\min\{i, k-\ell\}}{i \choose j} {k-i \choose k-j-\ell}\gamma^{\ell}\delta^{k-\ell} a^{i-j}b^jc^{j+\ell-i}d^{k-j-\ell}B_i\\
	& = \sum_{\ell=0}^k \sum_{i=0}^k \sum_{j=0}^{k-\ell} {i \choose j} {k-i \choose k-j-\ell}\gamma^{\ell}\delta^{k-\ell} a^{i-j}b^jc^{j+\ell-i}d^{k-j-\ell}B_i,
	\end{align*}
	where the second equality follows by applying the change of variable $\ell=k-j-r,$ and the fourth equality follows because if  $i < k - \ell$ and $j > i$, then ${i \choose j} =0$, and if $i-\ell >0$ and $j < i- \ell$, then ${k-i \choose k- \ell - j} =0.$ Hence, (\ref{Bl}) follows.

	%Notice that 
	%\begin{equation}\label{extsum}
	%\sum_{j=\max\{0, i-\ell\}}^{\min\{i, k-\ell\}} {i \choose j} {k-i \choose k-\ell-j} = \sum_{j=0}^{k-\ell} {i \choose j} {k-i \choose k-\ell-j},
	%\end{equation}
	%since if  $i \leq k - \ell$ and $j > i$, then ${i \choose j} =0$, and if $i \geq \ell$ and $j < i- \ell$, then ${k-i \choose k- \ell - j} =0.$
	From (\ref{Bl}),  we have { 
		\begin{align*}
		\|\widetilde{B}_{\ell}\|_2
		%&=
		%\left|\left|\sum_{i=0}^k \sum_{j=0}^{k-\ell}{i\choose j} {k-i \choose k-j-\ell} %a^{i-j}b^jc^{j+\ell-i}d^{k-j-{\ell}}B_i\right|\right|_2 \\ 
		%&\leq
		%\sum_{i=0}^k\left|\sum_{j=0}^{k-\ell} a^{i-j}b^jc^{j+\ell-i}d^{k-j-{\ell}}\left(\begin{array}{c}i \\ j \end{array}\right)
		%\left(\begin{array}{c}k-i \\ k-j-{\ell}\end{array}\right)\right|\ \|B_i\|_2\\ 
		&\leq
		\sum_{i=0}^k \sum_{j=0}^{k-\ell} {i \choose j} {k-i \choose k-j-\ell}|a|^{i-j}|b|^j|c|^{j+\ell-i}|d|^{k-j-{\ell}}\|B_i\|_2\\
		%&\leq
		%\sum_{i=0}^m\sum_{j=0}^i\sum_{\ell=i-j}^{m-j}|a|^{i-j}|b|^j|c|^{j+\ell-i}|d|^{m-j-{\ell}}\left(\begin{array}{c}i \\ j \end{array}\right)
		%\left(\begin{array}{c}m-i \\ m-j-{\ell}\end{array}\right)\|B_i\|\\
		&\leq \|A\|_M^k \sum_{i=0}^k {k \choose k-l} \|B_i\|_2\leq  \|A\|_{\infty}^k \sum_{i=0}^k {k \choose k-l} \|B_i\|_2, %\leq \sigma_1^k {k\choose k-l} \sum_{i=0}^k \|B_i\|_2, 
		\end{align*} }
	where the second inequality follows from  the Chu-Vandermonde identity \cite{vandermonde}
	\begin{equation}\label{chu}
	\sum_{j=0}^{k-\ell} {i \choose j} {k-i \choose k-\ell-j} ={k \choose k-\ell}.
	\end{equation} 
	The inequality in (\ref{normBi}) follows taking into account that \cite{brualdi}
	$${k \choose k - \ell} \leq {k \choose \lfloor k/2 \rfloor}, \quad 0 \leq \ell \leq k.$$
\end{proof}

%%%%%%%%%%%%%%%%%%%%%%%%%%%
%%%%%%%%%%%%%%%%%%%%%%%%%%%
\section{Eigenvalue condition numbers and backward errors of matrix polynomials}\label{cond-back-sect}

To measure the change of the condition number of a simple eigenvalue $(\alpha_0,\beta_0)$ of a regular homogeneous matrix polynomial $P(\alpha, \beta)$
%=\sum_{i=0}^k \alpha^i \beta^{k-i} B_i$ 
when a M\"{o}bius transformation is applied to $P(\alpha, \beta)$, we consider  two eigenvalue condition numbers that have been introduced in the literature. One of them was presented by Dedieu and Tisseur in \cite{DedTis2003} as an alternative to the Wilkinson-like condition number defined for non-homogeneous matrix polynomials in \cite{Tis2000}. We call it the \textit{Dedieu-Tisseur condition number} and denote it by $\kappa_{h}((\alpha_0,\beta_0),P)$.  The other eigenvalue condition number is a generalization to matrix polynomials of the condition number introduced by Stewart and Sun for pencils \cite{Stewart} and we refer to it as the \textit{Stewart-Sun condition number}. We  denote it by $\kappa_{\theta}((\alpha_0,\beta_0),P)$. An advantage of these two eigenvalue condition numbers for homogeneous matrix polynomials over the relative eigenvalue condition number for non-homogeneous matrix polynomials often used in the literature \cite{Tis2000} is that they are well-defined for all simple eigenvalues, including zero and infinity.

We start by introducing the definition of the Stewart-Sun eigenvalue condition number, which is expressed in terms of the chordal distance whose definition we recall next.

\begin{definition}{\rm\cite[Chapter VI, Definition 1.20]{Stewart}}
	\label{def-chord}
	Let $x$ and $y$ be two nonzero vectors in $\mathbb{C}^2$ and let $\langle x \rangle$ and $\langle y \rangle$ denote the lines passing through zero in the direction of $x$ and $y$, respectively.
	The chordal distance between $\langle x \rangle$ and $\langle y \rangle$ is given by $$\chi(\langle x\rangle, \langle y \rangle):= \mathrm{sin}(\theta(\langle x \rangle, \langle y \rangle)),$$
	where $$\theta(\langle x \rangle, \langle y \rangle) := \mathrm{arccos} \frac{|\langle x, y \rangle |}{\|x\|_2\|y\|_2}, \quad 0\leq\theta(\langle x \rangle, \langle y \rangle)\leq\pi/2,$$
	and $\langle x, y \rangle$ denotes the standard Hermitian inner product, i.e., $\langle x,y\rangle=y^*x$.
\end{definition}

\begin{definition}
	\label{cord-cond}
	Let $(\alpha_0, \beta_0)$ be a simple eigenvalue of a regular matrix polynomial
	$P(\alpha, \beta)= \sum_{i=0}^k \alpha^i \beta^{k-i} B_i$ of degree $k$ and let $x$ be a right eigenvector of $P(\alpha, \beta)$ associated with $(\alpha_0, \beta_0)$. We define
	\begin{align*}
	& \kappa_{\theta}((\alpha_0,\beta_0),P):=\lim_{\epsilon\to 0}\sup \bigg\{\frac{\chi((\alpha_0,\beta_0),(\alpha_0+\Delta\alpha_0,\beta_0+\Delta\beta_0))}{\epsilon}:\\
	& [P(\alpha_0+\Delta\alpha_0,\beta_0+\Delta\beta_0)+\Delta P(\alpha_0+\Delta\alpha_0,\beta_0+\Delta\beta_0)](x+\Delta x)=0,\\
	& \|\Delta B_i\|_2\leq\epsilon\;\omega_i, i=0:k\bigg\},
	\end{align*}
	where  $\Delta P(\alpha, \beta)=\sum_{i=0}^k \alpha^i \beta^{k-i}\Delta B_i$ and $\omega_i$, $i=0:k$, are nonnegative weights that allow flexibility in how the perturbations of $P(\alpha, \beta)$ are measured.
\end{definition}

The next theorem  presents an explicit formula for this condition number.

\begin{theorem}\label{teorhomcondnumb}
	{\rm\cite[Theorem 2.13]{comparison}}
	Let $(\alpha_0,\beta_0)$ be a simple eigenvalue of a regular matrix polynomial $P(\alpha,\beta)=\sum_{i=0}^k\alpha^i\beta^{k-i}B_i$, and let  $y$ and $x$ be, respectively, a left and a right eigenvector of $P(\alpha, \beta)$  associated with $(\alpha_0, \beta_0)$. Then, the Stewart-Sun eigenvalue condition number of $(\alpha_0,\beta_0)$ is given by
	\begin{equation}\label{hom-form}
	\kappa_{\theta}((\alpha_0,\beta_0),P)=\left (\sum_{i=0}^k|\alpha_0|^{i}|\beta_0|^{k-i} \omega_i \right) \frac{\|y\|_2\|x\|_2}{|y^*(\overline{\beta_0}D_{\alpha} P(\alpha_0, \beta_0)-\overline{\alpha_0}D_{\beta} P(\alpha_0, \beta_0))x|},
	\end{equation}
	where $D_{z} \equiv \frac{\partial}{\partial z}$ denotes the partial derivative with respect to $z \in \{\alpha , \beta \}$.%, and $\omega_i$, $i=0:k$, is a nonnegative weight that defines how the perturbation of the coefficient $B_i$ is measured.
\end{theorem}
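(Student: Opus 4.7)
My plan is a standard first-order perturbation analysis, adapted to the projective ``eigenvalue-as-a-line'' viewpoint inherent in the Stewart-Sun condition number.

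I would start from the perturbed equation in Definition \ref{cord-cond} and expand to first order in $\epsilon$. Using $P(\alpha_0,\beta_0)x = 0$ to cancel the zeroth-order contribution, this yields
\begin{equation*}
P(\alpha_0,\beta_0)\Delta x + D_\alpha P(\alpha_0,\beta_0)\, x\,\Delta\alpha_0 + D_\beta P(\alpha_0,\beta_0)\, x\,\Delta\beta_0 + \Delta P(\alpha_0,\beta_0)\, x = o(\epsilon).
\end{equation*}
Left-multiplying by $y^*$ and using $y^* P(\alpha_0,\beta_0) = 0$ eliminates $\Delta x$ and leaves the scalar constraint
\begin{equation*}
(y^* D_\alpha P(\alpha_0,\beta_0) x)\,\Delta\alpha_0 + (y^* D_\beta P(\alpha_0,\beta_0) x)\,\Delta\beta_0 = -\,y^*\Delta P(\alpha_0,\beta_0) x + o(\epsilon).
\end{equation*}

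Next I would handle the chordal distance. Since the eigenvalue is a line in $\mathbb{C}^2$, both $\chi$ and the candidate right-hand side of the theorem are invariant under rescaling $[\alpha_0,\beta_0]^T \mapsto c\,[\alpha_0,\beta_0]^T$ (the numerator and denominator each pick up a factor $|c|^k$ by a direct degree count). So I may normalize $|\alpha_0|^2+|\beta_0|^2=1$ and, moreover, I may choose the representative of the perturbed eigenvalue so that its first-order displacement is orthogonal to $[\alpha_0,\beta_0]^T$, that is,
\begin{equation*}
\overline{\alpha_0}\,\Delta\alpha_0 + \overline{\beta_0}\,\Delta\beta_0 = 0.
\end{equation*}
For this orthogonal representative, a short expansion of $\cos\theta = |v^*(v+\Delta v)|/(\|v\|_2\|v+\Delta v\|_2)$ with $v=[\alpha_0,\beta_0]^T$ gives $\chi = \sqrt{|\Delta\alpha_0|^2+|\Delta\beta_0|^2} + o(\epsilon)$. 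Pairing this orthogonality constraint with the scalar equation above yields a $2\times 2$ linear system in $(\Delta\alpha_0,\Delta\beta_0)$; Cramer's rule then produces
\begin{equation*}
\chi = \frac{|y^*\Delta P(\alpha_0,\beta_0) x|}{|y^*(\overline{\beta_0}D_\alpha P(\alpha_0,\beta_0) - \overline{\alpha_0}D_\beta P(\alpha_0,\beta_0))x|} + o(\epsilon).
\end{equation*}

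Finally, I would take the supremum over admissible perturbations. A triangle-inequality plus Cauchy-Schwarz bound gives
\begin{equation*}
|y^*\Delta P(\alpha_0,\beta_0) x| \leq \epsilon \left(\sum_{i=0}^k |\alpha_0|^i|\beta_0|^{k-i}\omega_i\right)\|y\|_2\|x\|_2,
\end{equation*}
and this bound is attained by taking $\Delta B_i = \epsilon\omega_i\, e^{\mathrm{i}\varphi_i}\, yx^*/(\|y\|_2\|x\|_2)$ with phases $\varphi_i$ aligned so that all $k+1$ terms add in modulus. Dividing by $\epsilon$, letting $\epsilon\to 0$, and undoing the normalization by scale invariance yields the stated formula. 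The main obstacle I expect is the careful justification of the orthogonal-gauge choice and of the first-order chordal expansion in the second step; everything else is routine linearization together with an attainability argument for the supremum.
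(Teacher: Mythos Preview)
The paper does not prove this theorem at all: it is quoted verbatim from \cite[Theorem 2.13]{comparison} and used as a black box, so there is no ``paper's own proof'' to compare against. Your sketch, by contrast, is a self-contained first-order perturbation argument, and it is essentially correct. The key steps---linearizing the eigenvalue equation, eliminating $\Delta x$ via $y^*$, fixing the projective gauge by the orthogonality constraint $\overline{\alpha_0}\Delta\alpha_0+\overline{\beta_0}\Delta\beta_0=0$, solving the resulting $2\times 2$ system, and the rank-one attainability construction---all go through as you describe. Two small points worth tightening in a full write-up: (i) you should note that the denominator $y^*(\overline{\beta_0}D_\alpha P-\overline{\alpha_0}D_\beta P)x$ is nonzero precisely because $(\alpha_0,\beta_0)$ is simple (this is what makes the $2\times 2$ system nonsingular and, at a deeper level, what justifies the smooth dependence of the perturbed eigenvalue on $\epsilon$ via the implicit function theorem); and (ii) the orthogonal-gauge step deserves one extra sentence explaining that the freedom to rescale the perturbed representative by a scalar close to $1$ is exactly a one-parameter freedom, which is used up by imposing the single linear constraint of orthogonality to first order.
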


It is important to note that the explicit expression for  $\kappa_{\theta}((\alpha_0,\beta_0),P)$ does not depend on the choice of representative of the eigenvalue  $(\alpha_0, \beta_0)$.

The definition of the Dedieu-Tisseur condition number is quite involved. For that reason, it is not included in this paper. For the interested reader, that definition can be found in \cite{DedTis2003}. An explicit formula for this eigenvalue condition number is available in \cite[Theorem 4.2]{DedTis2003}.

%\begin{theorem}
%	\label{teorhomcondnumb}{\rm}
%	Let $(\alpha_0,\beta_0)$ be a simple eigenvalue of a regular matrix polynomial $P(\alpha,\beta)=\sum_{i=0}^k\alpha^i\beta^{k-i}B_i$, and let  $y$ and $x$ be, respectively, a left and a right eigenvector of $P(\alpha, \beta)$  associated with $(\alpha_0, \beta_0)$. Then, the Stewart-Sun eigenvalue condition number of $(\alpha_0,\beta_0)$ is given by
%	\begin{equation}\label{hom-form}
%	\kappa_{\theta}((\alpha_0,\beta_0),P)=\left (\sum_{i=0}^k|\alpha_0|^{2i}|\beta_0|^{2(k-i)} \omega_i^2 \right)^{1/2} \frac{\|y\|_2\|x\|_2}{|y^*(\overline{\beta_0}D_{\alpha} P(\alpha_0, \beta_0)-\overline{\alpha_0}D_{\beta} P(\alpha_0, \beta_0))x|},
%	\end{equation}
%	where ???$D_{x} \equiv \frac{\partial}{\partial x}$??? denotes the partial derivative with respect to ???$x$???, and $\omega_i$, $i=0:k$, is a nonnegative weight that defines how the perturbation of the coefficient $B_i$ is measured.
%\end{theorem}

In \cite[Corollary 3.3]{comparison}, it was proven that the Stewart-Sun and the Dedieu-Tisseur eigenvalue condition numbers differ at most by a factor $\sqrt{k+1}$ and, so, that they are equivalent. Therefore, it is enough to focus on studying the influence of M\"{o}bius transformations on just one of these two  condition numbers. The corresponding results for the other one can be easily obtained from Corollary 3.3 in \cite{comparison}. We focus on the Stewart-Sun condition number in this paper for two reasons: 1) the Stewart-Sun condition number is  easier to define than  the Dedieu-Tisseur condition number and its definition provides a  geometric intuition of the change in the eigenvalue that it measures; 2) in a subsequent paper we will study the effect of M\"obius transformations on the Wilkinson-like condition  number of the simple eigenvalues of a non-homogeneous matrix polynomial \cite{Tis2000}. Theorem 3.5  in  \cite{comparison} provides a simple connection between this non-homogeneous  condition number and the Stewart-Sun condition number that can be used to obtain the results for the non-homogeneous case from the results obtained in this paper for the Stewart-Sun condition number in a more straightforward way.

In the explicit expression for $\kappa_{\theta}((\alpha_0,\beta_0),P)$, the weights $\omega_i$ can be chosen in different ways leading to different variants of this condition number. In the following definition, we introduce the three types of weights (and the corresponding condition numbers) considered in this paper.
\begin{definition}\label{weights} With the same notation and assumptions as in Theorem \ref{teorhomcondnumb}:
	\begin{enumerate}
		\item The \emph{absolute eigenvalue condition number} of $(\alpha_0,\beta_0)$ is defined by taking $\omega_i=1$ for $i=0:k$ in $\kappa_{\theta}((\alpha_0,\beta_0),P)$ and is denoted by $\kappa_{\theta}^{a}((\alpha_0,\beta_0),P)$. 
		%$$\kappa_{\theta}^{a}((\alpha_0,\beta_0),P):= \frac{\left %(\sum_{i=0}^k|\alpha_0|^{2i}|\beta_0|^{2(k-i)}  %\right)^{1/2}\|y\|_2\|x\|_2}{|y^*(\overline{\beta_0}D_{\alpha} P(\alpha_0, %\beta_0)-\overline{\alpha_0}D_{\beta} P(\alpha_0, \beta_0))x|},$$
		\item The \emph{relative with respect to the norm of $P$ eigenvalue condition number} of $(\alpha_0,\beta_0)$ is defined  by taking $\omega_i=\|P\|_\infty = \max \limits_{j=0:k} \{\|B_j\|_2\}$ for $i=0:k$ in $\kappa_{\theta} ((\alpha_0,\beta_0),P)$ and is denoted by $\kappa_{\theta}^{p}((\alpha_0,\beta_0),P)$.   
		%Another  matrix polynomial norm  used is $\|P\|_F:=\|[B_0,B_1,\dots, B_k]\|_F$. Since %these two norms are equivalent, the results that we prove for $\|P\|_\infty$ can be %easily extended to $\|P\|_\infty$. 
		%$$\kappa_{\theta}^{p}((\alpha_0,\beta_0),P):= \frac{\left %(\sum_{i=0}^k|\alpha_0|^{2i}|\beta_0|^{2(k-i)}  \right)^{1/2}\max_{i=0:k} %\{\|B_i\|_2\}\|y\|_2\|x\|_2}{|y^*(\overline{\beta_0}D_{\alpha} P(\alpha_0, %\beta_0)-\overline{\alpha_0}D_{\beta} P(\alpha_0, \beta_0))x|},$$
		\item The \emph{relative eigenvalue condition number} of $(\alpha_0,\beta_0)$ is defined by taking $\omega_i=\|B_i\|_2$ for $i=0:k$ in $\kappa_{\theta} ((\alpha_0,\beta_0),P)$ and is denoted by $\kappa_{\theta}^{r}((\alpha_0,\beta_0),P)$.
		%$$\kappa_{\theta}^{r}((\alpha_0,\beta_0),P):= \frac{\left %(\sum_{i=0}^k|\alpha_0|^{2i}|\beta_0|^{2(k-i)} \|B_i\|_2^2 %\right)^{1/2}\|y\|_2\|x\|_2}{|y^*(\overline{\beta_0}D_{\alpha} P(\alpha_0, %\beta_0)-\overline{\alpha_0}D_{\beta} P(\alpha_0, \beta_0))x|}.$$
	\end{enumerate}
\end{definition}	
\smallskip
The absolute eigenvalue condition number in Definition \ref{weights} does not correspond to perturbations in the coefficients of $P$ appearing in applications, but it is studied because its analysis is the simplest one. Quoting Nick Higham \cite[p. 56]{higham-function-book}, ``it is the relative condition number that is of interest, but it is more convenient to state results for the absolute condition number''. The relative with respect to the norm of $P$ eigenvalue condition number corresponds to perturbations in the coefficients of $P$ coming from the backward errors of solving PEPs by applying a backward stable generalized eigenvalue algorithm to any reasonable linearization of $P$ \cite{block-kron,VD-DW}. Observe that $\kappa_{\theta}^{p}((\alpha_0,\beta_0),P) = \|P\|_\infty \, \kappa_{\theta}^{a}((\alpha_0,\beta_0),P)$ and, therefore, one of these condition numbers can be easily computed from the other. Finally, the relative eigenvalue condition number corresponds to perturbations in the coefficients of $P$ coming from an ``ideal'' coefficientwise backward stable algorithm for the PEP. Unfortunately, nowadays, such ``ideal'' algorithm exists only for degrees $k=1$ (the QZ algorithm for generalized eigenvalue problems) and $k=2$, in this case via linearizations and delicate scalings of $P$ \cite{FanLinVD,hamarling,zeng-su}. The recent work \cite{VanBarel-Tisseur} shows that there is still some hope of finding an ``ideal'' algorithm for PEPs with degree $k >2$.

In this paper, we will also compare the backward errors of approximate right and left eigenpairs of  the M\"obius transform $M_A(P)$ of a homogeneous matrix polynomial $P$ with  the  backward errors of approximate right and left eigenpairs of $P$ constructed from those of $M_A(P)$. Next we introduce the definition of backward errors of approximate eigenpairs of a homogeneous matrix polynomial.

\begin{definition}
	\label{def-backw}
	Let $(\widehat x,(\widehat{\alpha_0},\widehat{\beta_0}))$ be an approximate right eigenpair of the regular matrix polynomial $P(\alpha,\beta)=\sum_{i=0}^k\alpha^i\beta^{k-i}B_i$. We define the backward error of $(\widehat x,(\widehat{\alpha_0},\widehat{\beta_0}))$ as
	\begin{equation*}
	\eta_P(\widehat x,(\widehat{\alpha_0},\widehat{\beta_0})):=\min\{\epsilon: (P(\widehat{\alpha_0},\widehat{\beta_0})+\Delta P(\widehat{\alpha_0},\widehat{\beta_0}))\widehat x=0, \|\Delta B_i\|_2\leq\epsilon\; \omega_i, i=0:k\},
	\end{equation*}
	where $\Delta P(\alpha,\beta)=\sum_{i=0}^k\alpha^i\beta^{k-i}\Delta B_i$ and $\omega_i, i=0:k$, are nonnegative weights that allow flexibility in how the perturbations of $P(\alpha,\beta)$ are measured. Similarly, for an approximate left eigenpair $(\widehat{y}^*, (\widehat{\alpha_0}, \widehat{\beta_0}))$, we define 
	\begin{equation*}
	\eta_P(\widehat{y}^*,(\widehat{\alpha_0},\widehat{\beta_0})) :=\min\{\epsilon: \widehat{y}^*(P(\widehat{\alpha_0},\widehat{\beta_0})+\Delta P(\widehat{\alpha_0},\widehat{\beta_0}))=0, \|\Delta B_i\|_2\leq\epsilon\; \omega_i, i=0:k\}.
	\end{equation*}
\end{definition}
Next we present an explicit formula to compute the backward error of an approximate eigenpair of a homogeneous matrix polynomial.

\begin{theorem}{\rm\cite{HigLiTis, Tis2000}}\label{back-def} 
	Let $(\widehat x,(\widehat{\alpha_0},\widehat{\beta_0}))$ and $(\widehat{y}^*, (\widehat{\alpha_0}, \widehat{\beta_0}))$ be, respectively,  an approximate right and an approximate left eigenpair of the regular matrix polynomial $P(\alpha,\beta)=\sum_{i=0}^k\alpha^i\beta^{k-i}B_i$. Then,
	\begin{enumerate}
		\item $\displaystyle \eta_P(\widehat x,(\widehat{\alpha_0},\widehat{\beta_0}))= \frac{\|P(\widehat{\alpha_0}, \widehat{\beta_0})\widehat x\|_2}{(\sum_{i=0}^k |\widehat{\alpha_0}|^i |\widehat{\beta_0}|^{k-i}\omega_i) \|\widehat x\|_2},$ and 
		\item $\displaystyle \eta_P(\widehat{y}^*,(\widehat{\alpha_0},\widehat{\beta_0}))= \frac{\|\widehat{y}^* P(\widehat{\alpha_0}, \widehat{\beta_0})\|_2}{(\sum_{i=0}^k |\widehat{\alpha_0}|^i |\widehat{\beta_0}|^{k-i}\omega_i) \|\widehat y\|_2}.$
	\end{enumerate}
\end{theorem}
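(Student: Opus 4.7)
The plan is the standard two-sided argument for backward errors: establish the displayed quantity as a lower bound on the set over which we minimize, and then exhibit an explicit perturbation $\Delta P$ that attains it. I will write the argument for the right eigenpair in detail; the left case is completely analogous after transposition.

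For the lower bound, fix any admissible perturbation, i.e., any $\Delta B_i$ with $\|\Delta B_i\|_2 \le \epsilon\,\omega_i$ for $i=0\!:\!k$ and $(P+\Delta P)(\widehat\alpha_0,\widehat\beta_0)\widehat x = 0$, so that $\Delta P(\widehat\alpha_0,\widehat\beta_0)\widehat x = -P(\widehat\alpha_0,\widehat\beta_0)\widehat x$. Taking $2$-norms and using the triangle inequality together with submultiplicativity gives
\begin{equation*}
\|P(\widehat\alpha_0,\widehat\beta_0)\widehat x\|_2 \;\le\; \sum_{i=0}^{k} |\widehat\alpha_0|^i|\widehat\beta_0|^{k-i}\,\|\Delta B_i\|_2\,\|\widehat x\|_2 \;\le\; \epsilon\left(\sum_{i=0}^{k}|\widehat\alpha_0|^i|\widehat\beta_0|^{k-i}\omega_i\right)\|\widehat x\|_2,
\end{equation*}
which immediately yields $\eta_P(\widehat x,(\widehat\alpha_0,\widehat\beta_0)) \ge \|P(\widehat\alpha_0,\widehat\beta_0)\widehat x\|_2 / \left[\left(\sum_i |\widehat\alpha_0|^i|\widehat\beta_0|^{k-i}\omega_i\right)\|\widehat x\|_2\right]$.

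For the reverse inequality, let $r := -P(\widehat\alpha_0,\widehat\beta_0)\widehat x$ and set $\sigma := \sum_{i=0}^k |\widehat\alpha_0|^i|\widehat\beta_0|^{k-i}\omega_i$. I will construct $\Delta B_i$ of rank at most one by the ansatz
\begin{equation*}
\Delta B_i \;=\; \frac{c_i}{\|\widehat x\|_2^2}\, r\, \widehat x^{\,*}, \qquad i=0\!:\!k,
\end{equation*}
so that $\Delta B_i \widehat x = c_i r$ and $\|\Delta B_i\|_2 = |c_i|\,\|r\|_2/\|\widehat x\|_2$. Choosing the unimodular phase $c_i := \omega_i\,\overline{\widehat\alpha_0^{\,i}\widehat\beta_0^{\,k-i}}/\bigl(\sigma\,|\widehat\alpha_0|^i|\widehat\beta_0|^{k-i}\bigr)$ whenever $\widehat\alpha_0^{\,i}\widehat\beta_0^{\,k-i}\ne 0$ and $c_i := 0$ otherwise turns each product $c_i\,\widehat\alpha_0^{\,i}\widehat\beta_0^{\,k-i}$ into the nonnegative real number $\omega_i|\widehat\alpha_0|^i|\widehat\beta_0|^{k-i}/\sigma$, so that $\sum_i c_i\,\widehat\alpha_0^{\,i}\widehat\beta_0^{\,k-i}=1$ and hence $\Delta P(\widehat\alpha_0,\widehat\beta_0)\widehat x = r$. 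Moreover $\|\Delta B_i\|_2 = \omega_i\|r\|_2/(\sigma\|\widehat x\|_2)$, which matches the lower bound with equality.

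The only point requiring a little extra care is the case in which some $|\widehat\alpha_0|^i|\widehat\beta_0|^{k-i}$ vanishes (that is, $\widehat\alpha_0=0$ or $\widehat\beta_0=0$); the coefficients $B_i$ with vanishing scalar factor simply do not enter $P(\widehat\alpha_0,\widehat\beta_0)$, and setting the corresponding $\Delta B_i=0$ causes no issue since their contribution to $\sigma$ is also zero and the normalisation $\sum_i c_i\,\widehat\alpha_0^{\,i}\widehat\beta_0^{\,k-i}=1$ still holds. I expect this bookkeeping to be the only mildly nontrivial step; the rest is direct. Finally, for the left eigenpair, replace the rank-one ansatz by $\Delta B_i = (c_i/\|\widehat y\|_2^2)\,\widehat y\, s^{*}$ with $s^{*} := -\widehat y^{\,*}P(\widehat\alpha_0,\widehat\beta_0)$ and repeat the argument verbatim to obtain the second formula.
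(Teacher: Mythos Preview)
The paper does not actually prove this theorem; it is stated with a citation to \cite{HigLiTis, Tis2000} and no proof is given. Your argument is the standard two-step proof (residual bound from the triangle inequality, then an explicit rank-one perturbation attaining it) that one finds in those references, and it is correct. One small wording slip: you call $c_i$ a ``unimodular phase'', but in fact $|c_i|=\omega_i/\sigma$; only the factor $\overline{\widehat\alpha_0^{\,i}\widehat\beta_0^{\,k-i}}/\bigl(|\widehat\alpha_0|^i|\widehat\beta_0|^{k-i}\bigr)$ is unimodular. This does not affect the argument.
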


As in the case of condition numbers, the weights in Definition \ref{def-backw} can be chosen in different ways. We will consider the same three choices as in Definition  \ref{weights}, which leads to the following definition.

\begin{definition} \label{def.weigths-back} With the same notation and assumptions as in Definition \ref{def-backw}:
	\begin{enumerate}
		\item The \emph{absolute backward errors} of $(\widehat x,(\widehat{\alpha_0},\widehat{\beta_0}))$ and  $(\widehat{y}^*,(\widehat{\alpha_0},\widehat{\beta_0}))$ are defined by taking  $\omega_i=1$ for $i=0:k$ in $\eta_P (\widehat x,(\widehat{\alpha_0},\widehat{\beta_0}))$ and $\eta_P (\widehat{y}^*,(\widehat{\alpha_0},\widehat{\beta_0}))$, and are denoted by $\eta_P^a (\widehat x,(\widehat{\alpha_0},\widehat{\beta_0}))$ and $\eta_P^a (\widehat{y}^*,(\widehat{\alpha_0},\widehat{\beta_0}))$.
		\item The \emph{relative with respect to the norm of $P$ backward errors} of $(\widehat x,(\widehat{\alpha_0},\widehat{\beta_0}))$ and  $(\widehat{y}^*,(\widehat{\alpha_0},\widehat{\beta_0}))$ are defined by taking  $\omega_i=\|P\|_\infty$ for $i=0:k$, and are denoted by $\eta_P^p (\widehat x,(\widehat{\alpha_0},\widehat{\beta_0}))$ and $\eta_P^p (\widehat{y}^*,(\widehat{\alpha_0},\widehat{\beta_0}))$.
		\item The \emph{relative backward errors} of $(\widehat x,(\widehat{\alpha_0},\widehat{\beta_0}))$ and  $(\widehat{y}^*,(\widehat{\alpha_0},\widehat{\beta_0}))$ are defined by taking $\omega_i=\|B_i\|_2$ for $i=0:k$, and are denoted by $\eta_P^r (\widehat x,(\widehat{\alpha_0},\widehat{\beta_0}))$ and $\eta_P^r (\widehat{y}^*,(\widehat{\alpha_0},\widehat{\beta_0}))$.
		%$$\eta_P^{a}(\widehat x,(\widehat{\alpha_0},\widehat{\beta_0})):= %\frac{\|P(\widehat{\alpha_0}, \widehat{\beta_0})\widehat x\|_2}{(\sum_{i=0}^k %|\widehat{\alpha_0}|^i |\widehat{\beta_0}|^{k-i}) \|\widehat x\|_2}.$$
		%\item Relative with respect to the norm of $P$ backward error: $\omega_i=\|P\|_\infty$, for %$i=0:k$.
		%$$\eta_P^{p}(\widehat x,(\widehat{\alpha_0},\widehat{\beta_0})):= %\frac{\|P(\widehat{\alpha_0}, \widehat{\beta_0})\widehat %x\|_2}{\max_{i=0:k}\{\|B_i\|_2\}(\sum_{i=0}^k |\widehat{\alpha_0}|^i %|\widehat{\beta_0}|^{k-i}) \|\widehat x\|_2}.$$
		% \item Relative backward error: $\omega_i=\|B_i\|_2$, for $i=0:k$.
		%  $$\eta_P^{r}(\widehat x,(\widehat{\alpha_0},\widehat{\beta_0})):= %\frac{\|P(\widehat{\alpha_0}, \widehat{\beta_0})\widehat x\|_2}{(\sum_{i=0}^k %|\widehat{\alpha_0}|^i |\widehat{\beta_0}|^{k-i}\|B_i\|_2) \|\widehat x\|_2}$$
	\end{enumerate}
\end{definition}

%%%%%%%%%%%%%%%%%%%%%%%%%%%%%%%%%%%
\section{Effect of M\"{o}bius transformations on  eigenvalue condition numbers}
\label{SecHom} 
This section contains  the most important results of this paper (presented in Subsection \ref{subsec.52}), which are obtained from the key and technical Theorem \ref{quotient-expr-1} (included in Subsection \ref{subsec.51}). In Subsection \ref{subsec.53} we present some additional results.

Throughout this section, $P(\alpha, \beta)\in \mathbb{C}[\alpha,\beta]^{n\times n}_k$ is a regular homogeneous matrix polynomial and $(\alpha_0,\beta_0)$ is a simple eigenvalue of $P(\alpha, \beta)$. Moreover, $M_A$ is a  M\"{o}bius transformation on $\mathbb{C}[\alpha,\beta]^{n\times n}_k$ and $\langle A^{-1}[\alpha_0, \beta_0]^T\rangle$ is the eigenvalue of $M_A(P)$ associated with $(\alpha_0,\beta_0)$ introduced in Definition \ref{def.associated}. We are interested in studying the influence of the M\"{o}bius transformation $M_A$ on the Stewart-Sun eigenvalue condition number, that is, we would like to compare the Stewart-Sun  condition numbers of $(\alpha_0,\beta_0)$ and $\langle A^{-1}[\alpha_0, \beta_0]^T\rangle$. More precisely, our goal is to determine sufficient conditions on $A$, $P$ and $M_A(P)$ so that  the condition number of $\langle A^{-1}[\alpha_0, \beta_0]^T\rangle$ is similar to that of $(\alpha_0,\beta_0)$, \emph{independently of the particular eigenvalue $(\alpha_0,\beta_0)$ that is considered}. With this goal in mind, we first obtain  upper and lower bounds on the quotient
\begin{equation}\label{notation-hom}
Q_{\theta}:=\frac{\kappa_{\theta}(\langle A^{-1}[\alpha_0, \beta_0]^T\rangle, M_A(P))}{\kappa_{\theta}((\alpha_0, \beta_0),P)}
\end{equation}
which are independent of $(\alpha_0, \beta_0)$ 
and, then, we find conditions that make these upper and lower bounds approximately equal to one or, more precisely, moderate numbers. 

In view of Definition \ref{weights}, three variants of the quotient \eqref{notation-hom}, denoted by $Q_{\theta}^a, Q_{\theta}^p,$ and $Q_{\theta}^r$, are considered, which correspond, respectively, to quotients of absolute, relative with respect to the norm of the polynomial, and relative eigenvalue condition numbers. \emph{The lower and upper bounds for $Q_{\theta}^a$ and $Q_{\theta}^p$ are presented in Theorems \ref{main-homo0} and \ref{main-homo1} and depend only on $A$ and the degree $k$ of $P$}. So, these bounds lead to very simple sufficient conditions, valid for \emph{all} polynomials and simple  eigenvalues, that allow us to identify some M\"{o}bius transformations which do not change significantly the condition numbers.  \emph{The lower and upper bounds for $Q_{\theta}^r$ are presented in Theorem \ref{main-homo} and depend only on $A$,  the degree $k$ of $P$, and some ratios of the norms of the matrix coefficients of $P$ and $M_A(P)$}. These bounds also lead to simple sufficient conditions, valid for \emph{all} simple eigenvalues but only for certain matrix polynomials, that allow us to identify some M\"{o}bius transformations which do not change significantly the condition numbers.

The first obstacle we have found in obtaining the results described in the previous paragraph is that a direct application of Theorem \ref{teorhomcondnumb} leads to a very complicated expression for the quotient $Q_{\theta}$ in \eqref{notation-hom}. Therefore, in Theorem \ref{quotient-expr-1} we deduce an expression for $Q_{\theta}$ that depends only on $(\alpha_0,\beta_0)$, the matrix $A$ inducing the M\"{o}bius transformation, and the weights  
$\widetilde{\omega}_i$ and $\omega_i$ used in $\kappa_{\theta}(\langle A^{-1}[\alpha_0, \beta_0]^T\rangle, M_A(P))$ and $\kappa_{\theta}((\alpha_0, \beta_0),P)$, respectively. Thus, this expression gets rid of  the partial derivatives of $P$ and $M_A(P)$.

%%%%%%%%%%%%%%%%%%%%%%%%%%%
\subsection{A derivative-free expression for the quotient of condition numbers} \label{subsec.51} The derivative-free expression for $Q_{\theta}$  obtained in this section is \eqref{Qrah}. Before diving into the details of its proof, we emphasize that, even though the formula for the Stewart-Sun condition number is  independent of the representative of the eigenvalue, the expression  \eqref{Qrah} is  independent of the particular representative $[\alpha_0 , \beta_0]^T$ chosen for the eigenvalue $(\alpha_0 , \beta_0)$ of $P$ but \emph{not} of the representative of the associated eigenvalue  of $M_A (P)$, which must be $A^{-1}[\alpha_0, \beta_0]^T$. %, i.e., the representative associated with $[\alpha_0 , \beta_0]^T$ introduced in Definition \ref{def.associated}, once $[\alpha_0 , \beta_0]^T$ is chosen. 
%This is the reason why we have decided to write \eqref{Qrah} without mentioning explicitly $(\gamma_0 , \delta_0)$. 
A second remarkable feature of \eqref{Qrah} is that  it depends on the determinant of the matrix $A$ inducing the M\"{o}bius transformation. Note also that $\mathrm{det} (A)$ cannot be removed by choosing a different representative of $\langle A^{-1}[\alpha_0, \beta_0]^T \rangle$. 

%Next, we state and prove the main result of this section.

\begin{theorem}\label{quotient-expr-1}
	Let $P(\alpha,\beta)= \sum_{i=0}^k \alpha^i \beta^{k-i} B_i \, \in \mathbb{C}[\alpha,\beta]^{n\times n}_k$ be a regular homogeneous matrix polynomial and let $A=\left[\begin{array}{cc} a & b \\ c & d\end{array} \right] \in GL(2, \mathbb{C})$. Let $M_A(P)(\gamma, \delta)= \sum_{i=0}^k \gamma^i \delta^{k-i}  \widetilde{B}_i  \in \mathbb{C}[\alpha,\beta]^{n\times n}_k$ be the M\"obius transform of $P(\alpha, \beta)$ under $M_A$. Let $(\alpha_0, \beta_0)$ be a simple eigenvalue of $P(\alpha, \beta)$ and let $[\alpha_0, \beta_0]^T$ be a representative of $(\alpha_0, \beta_0)$. Let $[\gamma_0, \delta_0]^T := A^{-1}[\alpha_0, \beta_0]^T$ be the representative of the eigenvalue of $M_A(P)$ associated with $[\alpha_0, \beta_0]^T$.
	%and let $(\gamma_0, \delta_0)$ be the eigenvalue of $M_A(P)$ associated with $(\alpha_0, \beta_0)$. Let  $[\gamma_0, \delta_0]=[d\alpha_0-b\beta_0, a\beta_0 - c \alpha_0]$ be the representative of  the eigenvalue of $M_A(P)$ associated with $[\alpha_0, \beta_0]$.
	Let $Q_{\theta}$ be as in (\ref{notation-hom}) and let $\omega_i$ and $\widetilde{w}_i$ be the weights in the definition of the Stewart-Sun eigenvalue condition number associated with  the eigenvalues $(\alpha_0,\beta_0)$ and $\langle A^{-1}[\alpha_0,\beta_0]^T \rangle$, respectively. Then, \begin{equation}\label{Qrah} Q_{\theta} =
	\frac{\sum_{i=0}^k \left | \gamma_0 \right |^{i} \left |\delta_0 \right |^{(k-i)}\widetilde{\omega}_i }{|\mathrm{det}(A)| \, \sum_{i=0}^k |\alpha_0|^{i}|\beta_0|^{(k-i)}\omega_i}
	\frac{|\alpha_0|^2 + |\beta_0|^2}{\left |\gamma_0 \right |^2 + \left |\delta_0 \right|^2}.
	\end{equation}
	Moreover, (\ref{Qrah}) is independent of the choice of representative for $(\alpha_0, \beta_0)$.
\end{theorem}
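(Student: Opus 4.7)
\textbf{Proof plan for Theorem \ref{quotient-expr-1}.}

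The plan is to apply the explicit formula from Theorem \ref{teorhomcondnumb} to both condition numbers appearing in $Q_\theta$ and then eliminate all partial-derivative terms via a combination of the chain rule, Lemma \ref{eig-hom}, and Euler's identity for homogeneous polynomials. First, by Lemma \ref{eig-hom} the right and left eigenvectors of $M_A(P)$ at $\langle A^{-1}[\alpha_0,\beta_0]^T\rangle$ can be taken to be the same vectors $x$ and $y$ used for $P$ at $(\alpha_0,\beta_0)$, so the factor $\|x\|_2\|y\|_2$ cancels in the quotient. What remains is the ratio of the weighted sums $\sum_i|\gamma_0|^i|\delta_0|^{k-i}\widetilde\omega_i$ over $\sum_i|\alpha_0|^i|\beta_0|^{k-i}\omega_i$, multiplied by the ratio of two scalar quantities of the form $|y^*(\overline{\beta_0}D_\alpha P-\overline{\alpha_0}D_\beta P)x|$ (numerator) and its analogue for $M_A(P)$ evaluated at $(\gamma_0,\delta_0)$ (in the denominator).

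The second step is to reduce the $M_A(P)$ derivatives to those of $P$. From $M_A(P)(\gamma,\delta)=P(a\gamma+b\delta,c\gamma+d\delta)$ and $A[\gamma_0,\delta_0]^T=[\alpha_0,\beta_0]^T$, the chain rule yields
\begin{equation*}
D_\gamma M_A(P)(\gamma_0,\delta_0)=aD_\alpha P(\alpha_0,\beta_0)+cD_\beta P(\alpha_0,\beta_0),
\end{equation*}
and the analogous expression with coefficients $b,d$ for $D_\delta M_A(P)(\gamma_0,\delta_0)$. The third step, which is the main algebraic trick, is to exploit Euler's identity $\alpha D_\alpha P+\beta D_\beta P=kP$: since $y^*P(\alpha_0,\beta_0)x=0$, the scalars $p:=y^*D_\alpha P(\alpha_0,\beta_0)x$ and $q:=y^*D_\beta P(\alpha_0,\beta_0)x$ satisfy $\alpha_0 p+\beta_0 q=0$. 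As $(\alpha_0,\beta_0)\neq(0,0)$, the vector $(p,q)$ lies in the one-dimensional kernel of $(\alpha_0,\beta_0)$, so there exists a unique $\lambda\in\mathbb{C}$ with $p=-\lambda\beta_0$ and $q=\lambda\alpha_0$; simplicity of $(\alpha_0,\beta_0)$ forces $\lambda\neq0$, as otherwise the denominator in Theorem \ref{teorhomcondnumb} would vanish.

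With this parametrization I compute directly
\begin{equation*}
y^*(\overline{\beta_0}D_\alpha P-\overline{\alpha_0}D_\beta P)x=-\lambda(|\alpha_0|^2+|\beta_0|^2),
\end{equation*}
and, using the chain-rule expressions together with $c\alpha_0-a\beta_0=-\det(A)\delta_0$ and $d\alpha_0-b\beta_0=\det(A)\gamma_0$ (from the Remark following Definition \ref{def.associated}), I find $y^*D_\gamma M_A(P)(\gamma_0,\delta_0)x=-\lambda\det(A)\delta_0$ and $y^*D_\delta M_A(P)(\gamma_0,\delta_0)x=\lambda\det(A)\gamma_0$, whence
\begin{equation*}
y^*(\overline{\delta_0}D_\gamma M_A(P)-\overline{\gamma_0}D_\delta M_A(P))(\gamma_0,\delta_0)x=-\lambda\det(A)(|\gamma_0|^2+|\delta_0|^2).
\end{equation*}
The factor $\lambda$ cancels in the ratio and \eqref{Qrah} follows immediately. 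Finally, invariance under the choice of representative is checked by noting that replacing $[\alpha_0,\beta_0]^T$ by $\mu[\alpha_0,\beta_0]^T$ forces $[\gamma_0,\delta_0]^T$ to be replaced by $\mu[\gamma_0,\delta_0]^T$, and then both weighted sums scale as $|\mu|^k$ while $|\alpha_0|^2+|\beta_0|^2$ and $|\gamma_0|^2+|\delta_0|^2$ both scale as $|\mu|^2$, so \eqref{Qrah} is homogeneous of degree zero in $\mu$.

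The main obstacle I expect is recognizing that Euler's identity is the right tool: a direct expansion of $a\overline{\delta_0}-b\overline{\gamma_0}$ and $c\overline{\delta_0}-d\overline{\gamma_0}$ via the inverse formula produces unwieldy quadratic expressions in $a,b,c,d$ that do not visibly simplify. The clean cancellation appears only after collapsing the two-dimensional space of $(y^*D_\alpha P x,\,y^*D_\beta P x)$ to the single scalar $\lambda$ via Euler's relation; everything else is routine bookkeeping.
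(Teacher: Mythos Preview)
Your proof is correct and takes a genuinely different route from the paper's. The paper computes $\kappa_\theta((\alpha_0,\beta_0),P)$ and $\kappa_\theta(\langle[\gamma_0,\delta_0]^T\rangle,M_A(P))$ separately by direct expansion of the partial derivatives, splitting into the cases $\beta_0\neq 0$ and $\beta_0=0$; in each case it manipulates the sums $\sum_i i\alpha_0^{i-1}\beta_0^{k-i-1}B_i$ until a common factor can be identified and cancelled in the quotient. Your approach bypasses this case analysis entirely: the chain rule reduces the $M_A(P)$ derivatives to those of $P$ in one line, and Euler's identity $\alpha_0 D_\alpha P(\alpha_0,\beta_0)+\beta_0 D_\beta P(\alpha_0,\beta_0)=kP(\alpha_0,\beta_0)$, sandwiched between $y^*$ and $x$, collapses the pair $(y^*D_\alpha P\,x,\,y^*D_\beta P\,x)$ to a single parameter $\lambda$. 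This is cleaner and more conceptual than the paper's computation; it also makes transparent why the case $\beta_0=0$ needs no separate treatment, since Euler's relation and the parametrization $(p,q)=\lambda(-\beta_0,\alpha_0)$ remain valid there. The paper's direct approach, on the other hand, has the minor advantage of being entirely self-contained (it does not invoke Euler's identity explicitly) and of producing along the way explicit simplified formulas for each condition number individually, such as \eqref{P-no)} and \eqref{MAP-no0}, which might be reused elsewhere.
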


\begin{proof} In order to prove the formula (\ref{Qrah}), we compute $\kappa_{\theta}(\langle A^{-1}[\alpha_0,\beta_0]^T\rangle, M_A(P))$ and $\kappa_{\theta}((\alpha_0,\beta_0),P)$ separately,  and then calculate their quotient. Since the definition of the Stewart-Sun eigenvalue condition number is independent of the choice of representative of the eigenvalue, when computing the condition numbers of $(\alpha_0, \beta_0)$ and $\langle A^{-1}[\alpha_0,\beta_0]^T\rangle$,  we have freedom to choose any representative. In this proof, we choose an arbitrary representative  $[\alpha_0,\beta_0]^T$  of $(\alpha_0,\beta_0)$ and, once $[\alpha_0,\beta_0]^T$ is fixed, we choose  $[\gamma_0, \delta_0]^T:=A^{-1}[\alpha_0, \beta_0]^T$ as the representative of the eigenvalue of $M_A(P)$ associated with $(\alpha_0, \beta_0)$.
	
	We first compute $\kappa_{\theta}((\alpha_0,\beta_0),P)$. Let $x$ and $y$ be, respectively, a right and a left eigenvector of $P(\alpha,\beta)$ associated with $(\alpha_0, \beta_0)$. We start by simplifying the denominator of \eqref{hom-form}. Note that { 
		\begin{align}\label{partialP}
		D_{\alpha} P(\alpha, \beta) &= \sum_{i=1}^k i \alpha^{i-1} \beta^{k-i} B_i, 
		\qquad \mathrm{and}  \\
		\label{partialP2}
		D_{\beta} P(\alpha, \beta) & = \sum_{i=0}^{k-1} (k-i) \alpha^{i} \beta^{k-i-1} B_i= \sum_{i=0}^{k} (k-i) \alpha^{i} \beta^{k-i-1} B_i.
		\end{align}}

	We consider two cases.
	
	Case I: Assume that $\beta_0\neq 0$. Evaluating (\ref{partialP}) and (\ref{partialP2})  at $[\alpha_0, \beta_0]^T$, we get
	\begin{align*}
	&\overline{\beta_0} D_{\alpha} P(\alpha_0, \beta_0) - \overline{\alpha_0} D_{\beta} P(\alpha_0, \beta_0) \\
	&= |\beta_0|^2 \sum_{i=1}^{k} i \alpha_0^{i-1} \beta_0^{k-i-1} B_i -\overline{\alpha_0} \sum_{i=0}^{k} (k-i) \alpha_0^{i} \beta_0^{k-i-1} B_i \\
	&=( |\beta_0|^2 + |\alpha_0|^2) \sum_{i=1}^{k} i \alpha_0^{i-1} \beta_0^{k-i-1}B_i - \overline{\alpha_0} k \sum_{i=0}^{k} \alpha_0^{i} \beta_0^{k-i-1} B_i.
	\end{align*}
	Moreover, { 
		\begin{align}\label{P1}
		&| y^*(\overline{\beta_0} D_{\alpha} P(\alpha_0, \beta_0) - \overline{\alpha_0} D_{\beta} P(\alpha_0, \beta_0)) x| \nonumber \\
		&=\left| y^* \left (( |\beta_0|^2 + |\alpha_0|^2) \sum_{i=1}^{k} i \alpha_0^{i-1} \beta_0^{k-i-1}B_i - \frac{\overline{\alpha_0} k}{\beta_0} \sum_{i=0}^{k} \alpha_0^{i} \beta_0^{k-i} B_i\right) x \right | \nonumber\\
		%	&= \left|y^* \left ( ( |\beta_0|^2 + |\alpha_0|^2)  \sum_{i=1}^{k} i \alpha_0^{i-1} %\beta_0^{k-i-1}B_i - \frac{\overline{\alpha_0} k}{\beta_0} P(\alpha_0, \beta_0) \right ) %x \right | \nonumber\\
		&= ( |\beta_0|^2 + |\alpha_0|^2) \left|y^* \left (  \sum_{i=1}^{k} i \alpha_0^{i-1} \beta_0^{k-i-1}B_i  \right) x \right |,
		\end{align}
		where the last equality follows from $P(\alpha_0, \beta_0) x = 0$. }
	Thus, if $\beta_0\neq 0$, 
	\begin{equation}\label{P-no)} \kappa_{\theta}((\alpha_0,\beta_0),P)= \frac{\left (\sum_{i=0}^k|\alpha_0|^{i}|\beta_0|^{(k-i)} \omega_i \right )\|y\|_2\|x\|_2}{( |\beta_0|^2 + |\alpha_0|^2) \left|y^* \left (  \sum_{i=1}^{k} i \alpha_0^{i-1} \beta_0^{k-i-1}B_i  \right) x \right |}.
	\end{equation}
	
	Case II: If $\beta_0=0$, evaluating (\ref{partialP2})  at $[\alpha_0, \beta_0]^T$, we get that the denominator of \eqref{hom-form} is $|\alpha_0|^k |y^* B_{k-1} x|$. Thus,
	%	\begin{equation}\label{P0}
	%	| y^*(\overline{\beta_0} D_{\alpha} P(\alpha_0, \beta_0) - \overline{\alpha_0} %D_{\beta} P(\alpha_0, \beta_0) )x |
	%	=| y^*( \overline{\alpha_0} \alpha_0^{k-1} B_{k-1})x | 
	%	= |\alpha_0|^k |y^* B_{k-1} x|.
	%	\end{equation}
	%	In this case, 
	\begin{equation}\label{P-0} \kappa_{\theta}((\alpha_0,\beta_0),P)=\left (\sum_{i=0}^k|\alpha_0|^{i}|\beta_0|^{(k-i)} \omega_i \right) \frac{\|y\|_2\|x\|_2}{|\alpha_0|^k |y^* B_{k-1} x|}.
	\end{equation}

	%Let us consider now the representative $[d\alpha_0-b\beta_0, a \beta_0-c \alpha_0]$ of  ???the simple eigenvalue $(\gamma_0, \delta_0)$ of $M_A (P)(\gamma, \delta)$ associated with $(\alpha_0, \beta_0)$. 
	
	\medskip	
	Next, we compute $\kappa_{\theta}(\langle [\gamma_0,\delta_0]^T\rangle,M_A(P))$ and express it in terms of the coefficients of $P$. As above, we start by simplifying the denominator of \eqref{hom-form} when $P(\alpha, \beta)$ is replaced by $M_A(P)(\gamma, \delta)$ and $[\alpha_0, \beta_0]^T$ is replaced by $[\gamma_0,\delta_0]^T$. Recall that, by Lemma \ref{eig-hom},  $x$ and $y$ are, respectively, a right and a left eigenvector of $M_A(P)$ associated with $\langle [\gamma_0, \delta_0]^T \rangle$.  Note that, since
	$M_A(P)(\gamma, \delta)= \sum_{i=0}^k (a \gamma + b\delta)^i (c \gamma + d \delta)^{k-i} B_i,$ we have
	\begin{align}
	D_{\gamma}M_A(P)(\gamma, \delta) 
	%&= \sum_{i=1}^k a i (a\gamma+b\delta)^{i-1}(c\gamma+d\delta)^{k-i}B_i + %\sum_{i=0}^{k-1} c (k-i) (a\gamma+b\delta)^i (c\gamma + d \delta)^{k-i-1} B_i %\nonumber\\
	&=\sum_{i=1}^k a i (a\gamma+b\delta)^{i-1}(c\gamma+d\delta)^{k-i}B_i \nonumber \\ & \phantom{=} + \sum_{i=0}^{k} c (k-i) (a\gamma+b\delta)^i (c\gamma + d \delta)^{k-i-1} B_i , \label{partialM1} \\
	D_{\delta}M_A(P)(\gamma, \delta)
	%&= \sum_{i=1}^k b i (a\gamma+b\delta)^{i-1}(c\gamma+d\delta)^{k-i}B_i + %\sum_{i=0}^{k-1} d (k-i) (a\gamma+b\delta)^i (c\gamma + d \delta)^{k-i-1} %B_i\nonumber \\
	&= \sum_{i=1}^k b i (a\gamma+b\delta)^{i-1}(c\gamma+d\delta)^{k-i}B_i \nonumber
	\\ & \phantom{=} + \sum_{i=0}^{k} d (k-i) (a\gamma+b\delta)^i (c\gamma + d \delta)^{k-i-1} B_i. \label{partialM2}
	\end{align}

	Again, we consider two cases.
	
	Case I: Assume that $\beta_0 \neq 0$.  We evaluate (\ref{partialM1}) and (\ref{partialM2}) at $ [\gamma_0,\delta_0]^T=\left [\frac{d\alpha_0-b\beta_0}{\mathrm{det}(A)},\frac{a\beta_0-c\alpha_0}{\mathrm{det}(A)}\right]$, %simplify the results using (\ref{agbl}), 
	and get
	\begin{align*}
	D_{\gamma} M_A(P)(\gamma_0, \delta_0) &=\left[a \sum_{i=1}^k  i \alpha_0^{i-1} \beta_0 ^{k-i} B_i +c  \sum_{i=0}^{k}  (k-i) \alpha_0^i \beta_0^{k-i-1} B_i \right]\\
	%&=s^{k-1}???det(A)^{k-1}??? \left [ a \beta_0 \sum_{i=1}^{k}  i \alpha_0^{i-1} \beta_0^{k-i-1} B_i + c\sum_{i=0}^{k} (k-i) \alpha_0^{i} \beta_0^{k-i-1} B_i\right]\\
	&= \left [(a \beta_0-c\alpha_0) \sum_{i=1}^{k}  i \alpha_0^{i-1} \beta_0^{k-i-1} B_i + c k \sum_{i=0}^{k}\alpha_0^{i} \beta_0^{k-i-1} B_i\right]\\
	&= \left [\mathrm{det}(A) \delta_0 \sum_{i=1}^{k}  i \alpha_0^{i-1} \beta_0^{k-i-1} B_i + \frac{c k}{\beta_0} P(\alpha_0, \beta_0)\right].
	\end{align*}
	An analogous computation shows that
	\begin{align*}
	D_{\delta} M_A(P)(\gamma_0, \delta_0)
	%		&=det(A)^{k-1}\left[b \sum_{i=1}^k  i \alpha_0^{i-1} \beta_0 ^{k-i} B_i + %d\sum_{i=0}^{k-1}  (k-i) \alpha_0^i \beta_0^{k-i-1} B_i \right]\\
	%&=det(A)^{k-1} \left [ a \beta_0 \sum_{i=1}^{k-1}  i \alpha_0^{i-1} \beta_0^{k-i-1} B_i + c\alpha_0 \sum_{i=1}^{k-1} (k-i) \alpha_0^{i-1} \beta_0^{k-i-1} B_i\right]\\
	%	&=det(A)^{k-1} \left [(b \beta_0-d\alpha_0) \sum_{i=1}^{k}  i \alpha_0^{i-1} %\beta_0^{k-i-1} B_i + d k \sum_{i=0}^{k}\alpha_0^{i} \beta_0^{k-i-1} B_i\right]\\
	&=\left [-\mathrm{det}(A) \gamma_0 \sum_{i=1}^{k}  i \alpha_0^{i-1} \beta_0^{k-i-1} B_i + \frac{d k}{\beta_0}P(\alpha_0, \beta_0)\right].
	\end{align*}
	This implies that,  
	\begin{align}\label{M1}
	&| y^* (\overline{\delta_0} D_{\gamma}M_A(P)(\gamma_0, \delta_0) - \overline{\gamma_0} D_{\delta}M_A(P)(\gamma_0, \delta_0))x|\nonumber\\
	%	&= |det(A)|^{k-1} |y^*(|\delta_0|^2+|\gamma_0|^2) \sum_{i=1}^{k} i \alpha_0^{i-1} %\beta_0^{k-i-1}B_i + k\frac{(c\overline{\delta_0}-d\overline{\gamma_0}) }{\beta_0} %P(\alpha_0,\beta_0))x | \nonumber\\
	&\phantom{aaaaaaa} =|\mathrm{det}(A)|(|\delta_0|^2+|\gamma_0|^2) \left |y^*\left (\sum_{i=1}^{k} i \alpha_0^{i-1} \beta_0^{k-i-1}B_i\right )x \right |.
	\end{align}
	Thus, if $\beta_0\neq 0$, 
	\begin{equation}\label{MAP-no0} \kappa_{\theta}(\langle [\gamma_0, \delta_0]^T \rangle,M_A(P) )= \frac{\left (\sum_{i=0}^k|\gamma_0|^{i}|\delta_0|^{(k-i)} \widetilde{\omega_i} \right)\|y\|_2\|x\|_2}{|\mathrm{det}(A)|( |\gamma_0|^2 + |\delta_0|^2) \left|y^* \left (  \sum_{i=1}^{k} i \alpha_0^{i-1} \beta_0^{k-i-1}B_i  \right) x \right |}.
	\end{equation}

	Case II: If $\beta_0=0$, since $A[\gamma_0, \delta_0]^T=[\alpha_0, \beta_0]^T$, we deduce that $c\gamma_0+d \delta_0 =0$. Moreover, by \eqref{rel-eig},  $\gamma_0= d\alpha_0/\mathrm{det}(A)$ and $\delta_0=-c\alpha_0/\mathrm{det}(A)$. Since $x$ is a right eigenvector of $P(\alpha, \beta)$ with eigenvalue $(\alpha_0,0)$, we have that $0=P(\alpha_0, 0) x = \alpha_0^k B_k x$ which implies $B_k x =0$ since $\alpha_0 \neq 0$. Using all this information and some algebraic manipulations, we get that, if $\beta_0 = 0$,
	%Thus, we get
	%\begin{align}\label{M0}
	%& \overline{\delta_0} D_{\gamma}M_A(P)(\gamma_0,\delta_0) - \overline{\gamma_0} %D_{\delta}M_A(P)({\gamma_0}, {\delta_0})\nonumber\\
	%&= \overline{\delta_0} [ (akB_k+cB_{k-1})(a\gamma_0+b\delta_0)^{k-1}]- %\overline{\gamma_0}[(bkB_k+dB_{k-1}) (a\gamma_0+b\delta_0)^{k-1}]\nonumber\\
	%&=det(A)^{k-1}  \alpha_0^{k-2}[ \overline{\delta_0}\alpha_0 %(akB_k+cB_{k-1})-\overline{\gamma_0}\alpha_0 (bkB_k+dB_{k-1})],
	%\end{align}
	%where the second equality follows from (\ref{agbl}).
	%This implies that 
	%\begin{align}\label{M00}
	%&|y^*(\overline{\delta_0} D_{\gamma}M_A(P)(\gamma_0,\delta_0) - \overline{\gamma_0} %D_{\delta}M_A(P)(\gamma_0, \delta_0) x|\nonumber\\
	%&= |det(A)|^{k-1} |\alpha_0|^{k-2}|y^* [ %\overline{\delta_0}\alpha_0cB_{k-1}-\overline{\gamma_0}\alpha_0dB_{k-1})]  %x|\nonumber\\
	%&= |det(A)|^{k-1} |\alpha_0|^{k-2}|y^*(-|\delta_0|^2  B_{k-1} -|\gamma_0|^2  %B_{k-1})x| \nonumber\\
	%&=|det(A)|^{k-1} (|\delta_0|^2+|\gamma_0|^2) |\alpha_0|^{k-2} |y^* B_{k-1} x|.
	%\end{align}
	%where the first equality follows from (\ref{M0}) and the fact that $x$ is a right %eigenvector of $P(\alpha, \beta)$ associated with $(\alpha_0, 0)$ (note that %$P(\alpha_0, 0)= \alpha_0^k B_k$ and $\alpha_0\neq 0$ and, therefore, $B_k x =0$).
	\begin{equation}\label{MAP-0}  \kappa_{\theta}(\langle [\gamma_0,\delta_0]^T\rangle,M_A (P))= \frac{\left (\sum_{i=0}^k|\gamma_0|^{i}|\delta_0|^{(k-i)} \widetilde{\omega_i} \right)\|y\|_2\|x\|_2}{|\det(A)|( |\gamma_0|^2 + |\delta_0|^2) |\alpha_0|^{k-2} |y^* B_{k-1} x|}.
	\end{equation}
	
	Finally we compute $Q_{\theta}$. Note that, from (\ref{P-no)}), (\ref{P-0}), (\ref{MAP-no0}) and (\ref{MAP-0}), we get (\ref{Qrah}), regardless of the value of $\beta_0$. Moreover, note that \eqref{Qrah} does not change if $[\alpha_0, \beta_0]^T$ is replaced by $[t \, \alpha_0, t\, \beta_0]^T$ for any complex number $t\ne 0$.
	%	\begin{align*} 
	%	Q_{\theta}
	%&= \frac{(\sum_{i=0}^k |\gamma_0|^{2i} |\delta_0|^{2(k-i)}\widetilde{\omega_i}^2)^{1/2}}{(\sum_{i=0}^k |\alpha_0|^{2i}|\beta_0|^{2(k-i)} \omega_i^2)^{1/2}} \frac{| y^*(\overline{\beta_0} D_{\alpha} P(\alpha_0, \beta_0) - \overline{\alpha_0} D_{\beta} P(\alpha_0, \beta_0)) x| }{| y^* (\overline{\widetilde{\delta_0}} D_{\gamma}M_A(P)(\widetilde{\gamma_0}, \widetilde{\delta_0}) - \overline{\widetilde{\gamma_0}} D_{\delta}M_A(P)(\widetilde{\gamma_0}, \widetilde{\delta_0}))x| }\\
	%	&= \frac{(\sum_{i=0}^k |\gamma_0|^{2i} |\delta_0|^{2(k-i)} \widetilde{\omega_i}^2)^{1/2}}{(\sum_{i=0}^k |\alpha_0|^{2i}|\beta_0|^{2(k-i)} \omega_i^2)^{1/2}}  \frac{|\beta_0|^2+|\alpha_0|^2}{ |det(A)|^{k-1} (|\delta_0|^2 + |\gamma_0|^2)},
	%&= \frac{\max_{i=0:k}\{\|\tilde B_i\|_2\}(\sum_{i=0}^k |\gamma_0|^{2i} |\delta_0|^{2(k-i)})^{1/2}}{\max_{i=0:k}\{\|B_i\|_2\}(\sum_{i=0}^k |\alpha_0|^{2i}|\beta_0|^{2(k-i)})^{1/2}}  \frac{|\beta_0|^2+|\alpha_0|^2}{ |det(A)|^{k-1} (|\delta_0|^2 + |\gamma_0|^2)}
	%	\end{align*}	
	%	where the second equality  follows from (\ref{P1}), (\ref{P0}), (\ref{M1}), and (\ref{M00}). Thus, the result follows.
\end{proof}

%\begin{remark}
%Note that the bounds $Q_{\theta}$ 
%Since the definition of the Stewart-Sun eigenvalue condition number does not depend on the choice of the representative of the eigenvalue, the quotient $Q_{\theta}$ does not depend on the choice of the representatives of $(\alpha_0,\beta_0)$ and $(\gamma_0,\delta_0)$. However, the expression in \eqref{Qrah} does depend on the representative of $(\gamma_0,\delta_0)$. This is due to the fact that to obtain \eqref{Qrah}, a specific value of $\gamma_0$ and $\delta_0$ have been needed and, consequently, we have chosen the representative $[d\alpha_0-b\beta_0,a\beta_0-c\alpha_0]$ for $(\gamma_0,\delta_0)$.
%\end{remark}

In the spirit of Definition \ref{weights}, when comparing the condition number of an eigenvalue $(\alpha_0, \beta_0)$ of $P$ and the associated eigenvalue of $M_A(P)$, we will consider the three quotients introduced in the next definition.
\begin{definition} \label{notation-hom3}
	With the same notation and assumptions as in Theorem \ref{quotient-expr-1}, we define the following three quotients of condition numbers:
	\begin{enumerate}
		\item $\displaystyle Q_{\theta}^{a}:=\frac{\kappa_{\theta}^{a}(\langle A^{-1}[\alpha_0, \beta_0]^T\rangle, M_A(P)) }{\kappa_{\theta}^{a}((\alpha_0, \beta_0),P)}$, which is called the absolute quotient.
		\item $\displaystyle Q_{\theta}^{p}:=\frac{\kappa_{\theta}^{p}(\langle A^{-1}[\alpha_0, \beta_0]^T\rangle, M_A(P)) }{\kappa_{\theta}^{p}((\alpha_0, \beta_0),P)}$, which is called the relative quotient with respect to the norms of $M_A(P)$ and $P$.
		\item $\displaystyle Q_{\theta}^{r}:=\frac{\kappa_{\theta}^{r}(\langle A^{-1}[\alpha_0, \beta_0]^T\rangle, M_A(P)) }{\kappa_{\theta}^{r}((\alpha_0, \beta_0),P)}$, which is called the relative quotient.
	\end{enumerate}
\end{definition}
\medskip
Combining Definition \ref{weights} and the expression \eqref{Qrah}, we obtain immediately expressions for $Q_{\theta}^{a}$, $Q_{\theta}^{p}$, and $Q_{\theta}^{r}$ as explained in the following corollary.
\begin{corollary} \label{quotient-expr-2} With the same notation and assumptions as in Theorem \ref{quotient-expr-1}:
	\begin{enumerate}
		\item $Q_{\theta}^{a}$ is obtained from \eqref{Qrah} by taking  $\omega_i=\widetilde{\omega}_i=1$ for $i=0:k$.
		\item $Q_{\theta}^{p}$ is obtained from \eqref{Qrah} by taking  $\omega_i=\displaystyle{\max_{j=0:k}}\{\|B_j\|_2\}$ and $\widetilde{\omega}_i=\displaystyle{\max_{j=0:k}}\{\|\widetilde B_j\|_2\}$ for $i=0:k$.
		\item  $Q_{\theta}^{r}$ is obtained from \eqref{Qrah} by taking $\omega_i=\|B_i\|_2$ and  $\widetilde{\omega}_i=\|\widetilde B_i\|_2$ for $ i=0:k$.
	\end{enumerate}
\end{corollary}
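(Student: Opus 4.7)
The plan is to observe that Corollary \ref{quotient-expr-2} is an immediate specialization of Theorem \ref{quotient-expr-1} once we record the weight choices fixed by Definition \ref{weights}. Theorem \ref{quotient-expr-1} supplies the identity \eqref{Qrah} for the general Stewart-Sun quotient $Q_\theta$, valid for any nonnegative weights $\omega_i$ (attached to the eigenvalue condition number of $P$) and $\widetilde{\omega}_i$ (attached to the eigenvalue condition number of $M_A(P)$). The three quotients $Q_\theta^a,\, Q_\theta^p,\, Q_\theta^r$ in Definition \ref{notation-hom3} are formed from the three particular condition numbers in Definition \ref{weights}, each of which is the Stewart-Sun condition number with a prescribed choice of weights. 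Hence all that is needed is to read off, for each of the three items, what the weights $\omega_i$ of $\kappa_\theta((\alpha_0,\beta_0),P)$ and $\widetilde{\omega}_i$ of $\kappa_\theta(\langle A^{-1}[\alpha_0,\beta_0]^T\rangle, M_A(P))$ become, and substitute them into \eqref{Qrah}.

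Concretely, first I would handle the absolute case: setting $\omega_i=1$ in Definition \ref{weights}(1.) gives $\kappa_\theta^a((\alpha_0,\beta_0),P)$, and the same prescription applied to $M_A(P)$ gives $\kappa_\theta^a(\langle A^{-1}[\alpha_0,\beta_0]^T\rangle, M_A(P))$, so item (1.) of the corollary is obtained from \eqref{Qrah} with $\omega_i=\widetilde{\omega}_i=1$ for every $i=0:k$. Second, for the relative-to-the-norm case, Definition \ref{weights}(2.) gives $\omega_i=\|P\|_\infty=\max_{j=0:k}\|B_j\|_2$ in the denominator of $Q_\theta$, and the same rule applied to $M_A(P)$, whose coefficients are $\widetilde{B}_j$, gives $\widetilde{\omega}_i=\|M_A(P)\|_\infty=\max_{j=0:k}\|\widetilde{B}_j\|_2$ in the numerator, yielding item (2.). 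Third, for the relative case, Definition \ref{weights}(3.) gives $\omega_i=\|B_i\|_2$ and, analogously, $\widetilde{\omega}_i=\|\widetilde{B}_i\|_2$, which inserted into \eqref{Qrah} produces item (3.).

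There is essentially no obstacle: the only thing to be careful about is that the weights attached to $M_A(P)$ must be written in terms of its own coefficients $\widetilde{B}_i$ (as given by Proposition \ref{cotaMobius}), not in terms of the $B_i$, so that one does not inadvertently apply Definition \ref{weights} with the coefficients of the wrong polynomial. Once that bookkeeping point is respected, each of the three identities is a direct substitution into \eqref{Qrah}, and no further computation is required.
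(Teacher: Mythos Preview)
Your proposal is correct and matches the paper's own treatment: the paper simply notes that Corollary \ref{quotient-expr-2} is obtained immediately by combining Definition \ref{weights} with the expression \eqref{Qrah} from Theorem \ref{quotient-expr-1}, which is exactly the direct substitution you describe.
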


%The next result follows immediately from  Theorem \ref{quotient-expr-1}. 

%[d\alpha_0-b\beta_0, a\beta_0 - c \alpha_0]
%\begin{corollary}
%	\label{quotient-expr-2}
%	Let $Q_{\theta}^{a}$, $Q_{\theta}^{p}$ and $Q_{\theta}^{r}$ be as in \eqref{notation-hom3}. Under the same %conditions as  in Theorem \ref{quotient-expr-1}, we have
%	{\footnotesize	\begin{equation} Q_{\theta}^{a} = 
%	\frac{(\sum_{i=0}^k |d\alpha_0-b\beta_0|^{2i} |a\beta_0 - c \alpha_0|^{2(k-i)} %)^{1/2}}{(\sum_{i=0}^k |\alpha_0|^{2i}|\beta_0|^{2(k-i)})^{1/2}}
%	\frac{(|\beta_0|^2+|\alpha_0|^2) }{|det(A)|^{k-1}(|a\beta_0 - c %\alpha_0|^2+|d\alpha_0-b\beta_0|^2)},
%	\end{equation}}

%{\footnotesize	\begin{equation} Q_{\theta}^{p} = 
%	\frac{\max\{\|\tilde B_i\|_2\}(\sum_{i=0}^k |d\alpha_0-b\beta_0|^{2i} |a\beta_0 - c %\alpha_0|^{2(k-i)} )^{1/2}}{\max\{\|B_i\|_2\}(\sum_{i=0}^k %|\alpha_0|^{2i}|\beta_0|^{2(k-i)})^{1/2}}
%	\frac{(|\beta_0|^2+|\alpha_0|^2) }{|det(A)|^{k-1}(|a\beta_0 - c %\alpha_0|^2+|d\alpha_0-b\beta_0|^2)},
%	\end{equation}}

%{\small	\begin{equation}\label{kappara}
%	Q_{\theta}^{r}= 
%	\frac{(\sum_{i=0}^k |d\alpha_0-b\beta_0|^{2i} |a\beta_0 - c \alpha_0|^{2(k-i)} %\|\tilde{B}_i\|_2^2)^{1/2}}{(\sum_{i=0}^k %|\alpha_0|^{2i}|\beta_0|^{2(k-i)}\|B_i\|_2^2)^{1/2}} \frac{(|\beta_0|^2+|\alpha_0|^2) %}{|det(A)|^{k-1}(|a\beta_0 - c \alpha_0|^2+|d\alpha_0-b\beta_0|^2)}.\end{equation}}
%	This result is independent of the representative of $(\alpha_0, \beta_0)$. 
%\end{corollary}

\subsection{Eigenvalue-free bounds on the quotients of condition numbers} \label{subsec.52}
{The first goal of this section is to find lower and upper bounds on the quotients $Q^a_{\theta}$, $Q^p_{\theta}$, and $Q^r_{\theta}$, introduced in Definition \ref{notation-hom3}, that are independent of the considered eigenvalues. The second goal is to provide simple sufficient conditions guaranteeing that the obtained bounds are moderate numbers, i.e., not far from one. The bounds on $Q^a_{\theta}$ are obtained from the expression \eqref{Qrah} in Theorem \ref{quotient-expr-1}. The proofs of the bounds on $Q^p_{\theta}$ and $Q^r_{\theta}$ also require \eqref{Qrah}, but, in addition, Proposition \ref{cotaMobius} is used.

	%Recall that, for an $m \times n$ matrix $A$, we have defined   $\|A\|_{M} := \max_{i, j} \{|A_{ij}|\}$ and, if $A \in GL(2, \mathbb{C})$,  then $\|A^{-1}\|_{M} =\frac{ \|A\|_{M}}{|\det(A)|}$ holds. This last equality will be used often in this section without being explicitly mentioned. Moreover, 
	The bounds on $Q_{\theta}^p$ and $Q_{\theta}^r$ can be expressed in terms of the condition number of the matrix  $A \in GL(2, \mathbb{C})$ that induces the M\"obius transformation. We will  use the infinite condition number of $A$, that is,
	$$\mathrm{cond}_{\infty}(A) := \|A\|_{\infty} \|A^{-1}\|_{\infty}.$$
	It is interesting to highlight that the bounds on the quotients $Q^a_{\theta}$, $Q^p_{\theta}$, and $Q^r_{\theta}$ in Theorems \ref{main-homo0}, \ref{main-homo1}, and \ref{main-homo} will require different types of proofs  for polynomials of degree $k=1$ and for polynomials of degree $k \geq 2$. In fact, this is related to actual differences in the behaviours of these quotients for  polynomials of degree $1$ and larger than $1$ when the matrix $A$ inducing the M\"{o}bius transformation is ill-conditioned. These questions are studied in Subsection \ref{subsec.53}.
	
	The next theorem presents the announced upper and lower bounds on $Q_{\theta}^{a}$.
	\begin{theorem}\label{main-homo0}
		Let $P(\alpha,\beta) \in \mathbb{C}[\alpha,\beta]_k^{n\times n}$ be a regular homogeneous matrix polynomial and let $A \in GL(2, \mathbb{C})$.  Let $(\alpha_0, \beta_0)$ be a simple eigenvalue of $P(\alpha, \beta)$ and let $\langle A^{-1}[\alpha_0, \beta_0]^T\rangle$ be the eigenvalue of $M_A(P) (\gamma, \delta)$ associated with $(\alpha_0, \beta_0)$. 
		%Let $\tilde{k}=deg(M_A(P))$. 
		Let $Q_{\theta}^{a}$ be the absolute quotient in Definition \ref{notation-hom3}(1.) and let $S_k:=4(k+1)$. 
		\begin{enumerate}
			\item If $k=1$, then
			$$\frac{1}{2 \|A\|_{\infty}} \leq Q_{\theta}^{a}\leq 2 \|A^{-1}\|_{\infty}.$$
			\item If $k\geq 2$, then
			$$
			\frac{\|A^{-1}\|_{\infty}}{S_k \,  \|A\|_{\infty}^{k-1}}\leq
			Q_{\theta}^{a} \leq S_k \, \frac{ \|A^{-1}\|_{\infty}^{k-1}}{\|A\|_{\infty} }.
			$$	
		\end{enumerate}
	\end{theorem}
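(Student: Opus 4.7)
The plan is to start from the derivative-free representation of Corollary \ref{quotient-expr-2}(1.), namely
\begin{equation*}
Q_{\theta}^a = \frac{1}{|\det(A)|}\cdot\frac{\sum_{i=0}^k |\gamma_0|^{i}|\delta_0|^{k-i}}{\sum_{i=0}^k |\alpha_0|^{i}|\beta_0|^{k-i}}\cdot\frac{|\alpha_0|^2+|\beta_0|^2}{|\gamma_0|^2+|\delta_0|^2},
\end{equation*}
where $[\gamma_0,\delta_0]^T = A^{-1}[\alpha_0,\beta_0]^T$, and to bound it uniformly in the eigenvalue. Writing $u=[\alpha_0,\beta_0]^T$ and $v=A^{-1}u$, and setting $m=\|u\|_\infty$ and $M=\|v\|_\infty$, the two standard operator bounds $m\le \|A\|_\infty M$ and $M\le \|A^{-1}\|_\infty m$, together with the determinantal identity \eqref{quotient-norm-eig}, will carry the argument.

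For $k=1$ the expression becomes $Q_\theta^a = \|v\|_1\|u\|_2^2/(|\det(A)|\|u\|_1\|v\|_2^2)$. I would chain three elementary facts valid in $\mathbb{C}^2$: Cauchy--Schwarz gives $\|v\|_1\le\sqrt{2}\,\|v\|_2$; monotonicity of $p$-norms gives $\|u\|_2^2/\|u\|_1\le\|u\|_2$; and the operator bound gives $\|u\|_2\le\|A\|_2\|v\|_2$. Telescoping yields $Q_\theta^a\le\sqrt{2}\,\|A\|_2/|\det(A)|$, and the key observation that, for $A\in GL(2,\mathbb{C})$, $\|A\|_2/|\det(A)|=1/\sigma_{\min}(A)=\|A^{-1}\|_2$, followed by $\|A^{-1}\|_2\le\sqrt{2}\,\|A^{-1}\|_\infty$ on $\mathbb{C}^{2\times 2}$, delivers the upper bound $2\|A^{-1}\|_\infty$.

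For $k\ge 2$ I would first apply the elementary estimates $M^k \le \sum_{i=0}^k |\gamma_0|^{i}|\delta_0|^{k-i} \le (k+1)M^k$, $m^k\le\sum_{i=0}^k |\alpha_0|^{i}|\beta_0|^{k-i}$, $\|u\|_2^2\le 2m^2$, and $\|v\|_2^2\ge M^2$, which collapse $Q_\theta^a$ to at most $2(k+1)(M/m)^{k-2}/|\det(A)|$. Since $M\le \|A^{-1}\|_\infty m$, the factor $(M/m)^{k-2}$ is bounded by $\|A^{-1}\|_\infty^{k-2}$ (the exponent is nonnegative by hypothesis). Finally, I would invoke \eqref{quotient-norm-eig} in the form $|\det(A)|^{-1} = \|A^{-1}\|_\infty/\|A\|_1$ and the standard $2\times 2$ equivalence $\|A\|_\infty \le 2\|A\|_1$ to convert $\|A\|_1$ to $\|A\|_\infty$, producing exactly $S_k\|A^{-1}\|_\infty^{k-1}/\|A\|_\infty$.

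The lower bounds come for free from symmetry: by Proposition \ref{prop-mobius}(2.,3.), $M_{A^{-1}}\circ M_A$ is the identity operator, so the absolute quotient associated with transforming $M_A(P)$ back to $P$ via $M_{A^{-1}}$ equals $1/Q_\theta^a$. Applying the upper bounds just proved, but with $A$ replaced by $A^{-1}$, gives $1/Q_\theta^a\le 2\|A\|_\infty$ for $k=1$ and $1/Q_\theta^a\le S_k\|A\|_\infty^{k-1}/\|A^{-1}\|_\infty$ for $k\ge 2$, which are precisely the lower bounds on $Q_\theta^a$ stated in the theorem. The main difficulty is not analytical but rather bookkeeping: threading estimates among the $1$-, $2$-, and $\infty$-norms on $\mathbb{C}^2$ and $\mathbb{C}^{2\times 2}$ and the degree-$k$ power sums without leaking suboptimal constants, so that the final answer stays as clean as $S_k=4(k+1)$.
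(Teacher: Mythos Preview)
Your proposal is correct and follows essentially the same approach as the paper: start from the derivative-free expression \eqref{Qrah}, bound the power sums by $\infty$-norms for $k\ge 2$, absorb the eigenvalue-dependent ratio via $\|A^{-1}\|_\infty$, convert $|\det(A)|^{-1}$ using \eqref{quotient-norm-eig}, and obtain the lower bounds by the symmetry $P=M_{A^{-1}}(M_A(P))$. The only noteworthy variation is in the $k=1$ upper bound: the paper stays in the $1$-norm throughout, bounding $\|u\|_1/\|v\|_1\le\|A\|_1$ and then using $\|A\|_1/|\det(A)|=\|A^{-1}\|_\infty$ directly, whereas you pass through the $2$-norm and exploit the $2\times 2$ singular-value identity $\|A\|_2/|\det(A)|=\|A^{-1}\|_2$ before converting back to the $\infty$-norm; both routes land on the same constant $2$.
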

	
	\begin{proof}
		Let $A =\left[ \begin{array}{cc} a & b \\c & d \end{array} \right]$.	As in the proof of Theorem \ref{quotient-expr-1}, we choose  an arbitrary representative $[\alpha_0 , \beta_0]^T$ of $(\alpha_0 , \beta_0)$, and the associated representative $[\gamma_0 , \delta_0]^T :=A^{-1}[\alpha_0, \beta_0]^T=\left [\frac{ d\alpha_0-b\beta_0}{\mathrm{det}(A)}, \frac{a\beta_0-c\alpha_0}{\mathrm{det}(A)}\right]$ of the eigenvalue of $M_A(P)$.  We obtain first the upper bounds.
		
		If $k=1$, then, from Corollary \ref{quotient-expr-2}(1.) and  \eqref{quotient-norm-eig}, and recalling that $\frac{1}{\sqrt{2}} \|x\|_1 \leq \|x\|_2\leq \|x\|_1$ for every $2\times 1$ vector $x,$ we get
		\begin{align}%\label{Qa-k1}
		Q_{\theta}^{a} &= \frac{1}{|\mathrm{det}(A)|}\frac{(|\gamma_0| +|\delta_0|)(|\alpha_0|^2 +|\beta_0|^2)}{(|\alpha_0|+|\beta_0|)(|\gamma_0|^2+|\delta_0|^2)} = \frac{1}{|\mathrm{det}(A)|} \frac{\|[\gamma_0, \delta_0]^T\|_1\|[\alpha_0, \beta_0]^T\|_2^2}{\|[\alpha_0, \beta_0]^T\|_1\|[\gamma_0, \delta_0]^T\|_2^2}\label{Qa-k1-1}\\
		& \leq \frac{2}{|\mathrm{det}(A)|} \frac{\|[\alpha_0, \beta_0]^T\|_1}{\|[\gamma_0, \delta_0]^T\|_1}\leq \frac{2\|A\|_1}{|\det(A)|}=2\|A^{-1}\|_{\infty}. \label{Qa-k1-2}
		%\frac{\sqrt{2}}{|\mathrm{det}(A)|} \frac{\|[\alpha_0, \beta_0]\|_{\infty}}{\|[\gamma_0, \delta_0]\|_{\infty}} \leq \frac{\sqrt{2} \|A\|_{\infty}}{|\mathrm{det}(A)|} \leq  2\sqrt{2}\, \|A^{-1}\|_{\infty}. 
		\end{align}
		
		%\begin{align*}
		%???Q_{h}^{a} &= \frac{1}{|det(A)|}\frac{(|\beta_0|^2 +|\alpha_0|^2)^{1/2}}{(|\gamma_0|^2+|\delta_0|^2)^{1/2}} = \frac{1}{|\mathrm{det}(A)|} \frac{\|[\alpha_0, \beta_0]\|_2}{\|[\gamma_0, \delta_0]\|_2}\\
		%& \leq \frac{\|A\|_2}{|\mathrm{det}(A)|} \leq  \sqrt{2}\, \|A^{-1}\|_{\infty}.???
		%\end{align*}
		
		%Notice that the quotient $\frac{\|( \alpha_0, \beta_0)\|_{\infty}}{\|[\gamma_0, \delta_0]\|_{\infty}}=f_A^{(\alpha_0, \beta_0)}$ is independent of the representative of $(\alpha_0, \beta_0)$.  
		
		If $k \geq 2$, then by using again Corollary \ref{quotient-expr-2}(1.) and \eqref{quotient-norm-eig}, and recalling that $ \|x\|_{\infty}\leq \|x\|_2\leq \sqrt{2}\|x\|_{\infty}$ for every $2\times 1$ vector $x,$ we have%and Lemma \ref{rel-eig}, we have
		\begin{align} %\label{eq.linkabstorel}
		Q_{\theta}^{a} & \leq \frac{(k+1)\ }{|\det(A)|}\frac{\max\{|\gamma_0|^k, |\delta_0|^k\}}{(|\alpha_0|^{k}+|\beta_0|^{k})}\frac{ \|[\alpha_0, \beta_0|^T\|_2^2}{\|[\gamma_0, \delta_0]^T\|_2^2}\label{casek2-2} \\
		& \leq  \frac{2(k+1)\ }{|\det(A)|} \frac{ \| [\gamma_0, \delta_0]^T\|_{\infty}^{k-2}}{\| [\alpha_0, \beta_0]^T \|_{\infty}^{k-2}} \label{casek2-3} \\
		& \leq 2(k+1) \, \frac{ \|A^{-1}\|_{\infty}^{k-2}}{|\mathrm{det}(A)| }= 2(k+1) \, \frac{ \|A^{-1}\|_{\infty}^{k-1}}{\|A\|_1 },\label{casek2-4}
		\end{align}
		and the upper bound for $Q_{\theta}^a$ follows taking into account that $\|A\|_1 \geq \frac{\|A\|_{\infty}}{2}.$	 	
		To obtain the lower bounds, note that Proposition \ref{prop-mobius}(2.) implies
		\begin{equation} \label{eq.invlowbounds}
		\frac{1}{Q^a_{\theta}} =	\frac{\kappa_{\theta}^{a}((\alpha_0, \beta_0),P)}{\kappa_{\theta}^{a}(\langle A^{-1}[\alpha_0, \beta_0]^T\rangle, M_A(P)) }=
		\frac{\kappa_{\theta}^{a}((\alpha_0, \beta_0),M_{A^{-1}}(M_A(P)))}{\kappa_{\theta}^{a}(\langle A^{-1}[\alpha_0, \beta_0]^T\rangle, M_A(P)) }.
		\end{equation}
		The previously obtained upper bounds can be applied to the right-most quotient in \eqref{eq.invlowbounds} with $A$ and $A^{-1}$ interchanged. This leads to the lower bounds for $Q^a_{\theta}$.
	\end{proof}

	\begin{remark} (Discussion on the bounds in Theorem \ref{main-homo0}) \label{rem.absolute1} %???delete comment about large factor depending on $k$??? It is obvious that the two conditions
		\begin{equation} \label{eq.suffabs}
		\|A^{-1}\|_{\infty}\approx 1\quad \textrm{ and} \quad  \|A\|_{\infty} \approx 1
		\end{equation}
		are sufficient to imply that all the bounds in Theorem \ref{main-homo0} are moderate numbers since the factor in the bounds depending on $k$ is small for moderate $k$. Therefore, the conditions \eqref{eq.suffabs}, which involve only $A$, guarantee that the M\"{o}bius transformation $M_A$ \emph{does not change significantly the absolute eigenvalue condition number of any simple eigenvalue  of any matrix polynomial}.
		Observe that \eqref{eq.suffabs} implies, in particular, that  $\mathrm{cond}_{\infty}(A) \approx 1$, although the reverse implication does not hold. 
		
		For $k=1$ and $k>2$, the conditions \eqref{eq.suffabs} are also necessary for the bounds in Theorem \ref{main-homo0} to be moderate numbers. This is obvious for $k=1$. For $k >2$, note that $\|A^{-1}\|_{\infty}/\|A\|_{\infty}^{k-1}\approx 1$ and $\|A^{-1}\|_{\infty}^{k-1}/\|A\|_{\infty} \approx 1$ imply $\|A\|_{\infty}^{k^2 -2k} \approx 1$ and $\|A^{-1}\|_{\infty}^{k^2 -2k} \approx 1$, and, thus,
		$\|A\|_{\infty} \approx 1$ and $\|A^{-1}\|_{\infty} \approx 1$.
		
		However, the quadratic case $k=2$ is different because  the bounds in Theorem \ref{main-homo0} can be moderate in cases in which  the conditions \eqref{eq.suffabs} are not satisfied. For $k=2$, the lower and upper bounds are  $\|A^{-1}\|_{\infty}/(12 \, \|A\|_{\infty})$ and $12 \, \|A^{-1}\|_{\infty}/\|A\|_{\infty}$, which are moderate under the unique necessary and sufficient condition $\|A^{-1}\|_{\infty} \approx \|A\|_{\infty}$.
		
		Notice that the very important  Cayley transformations introduced in Definition \ref{cayley} satisfy $\|A\|_{\infty}=2$ and $\|A^{-1}\|_{\infty} =1$ and, so, they satisfy \eqref{eq.suffabs}. The same happens for the reversal M\"obius transformation in Example \ref{ex-reversal} since $\|R\|_{\infty} = \|R^{-1}\|_{\infty} =1$. 
	\end{remark}
	\smallskip
	
	The  next theorem presents the bounds on $Q_{\theta}^{p}$. As explained in the proof, these bounds can be readily obtained from combining Theorem \ref{main-homo0} and Proposition \ref{cotaMobius}. 
	\begin{theorem} \label{main-homo1}
		Let $P(\alpha,\beta) \in \mathbb{C}[\alpha,\beta]_k^{n\times n}$ be a regular homogeneous matrix polynomial and let $A \in GL(2, \mathbb{C})$.  Let $(\alpha_0, \beta_0)$ be a simple eigenvalue of $P(\alpha, \beta)$ and let $\langle A^{-1}[\alpha_0, \beta_0]^T \rangle$ be the eigenvalue of $M_A(P) (\gamma, \delta)$ associated with $(\alpha_0, \beta_0)$. 
		%Let $\tilde{k}=deg(M_A(P))$. 
		Let $Q_{\theta}^{p}$ be the relative quotient with respect to the norms of $M_A(P)$ and $P$ in Definition \ref{notation-hom3}(2.) and let $Z_k:=4(k+1)^{2}{k \choose \lfloor k/2\rfloor}$. 
		\begin{enumerate}
			\item If $k=1$, then
			$$\frac{1}{4 \; \mathrm{cond}_{\infty}(A)}\leq  Q_{\theta}^{p}\leq  4\; \mathrm{cond}_{\infty}(A).$$
			\item If $k\geq 2$, then
			$$
			\frac{1}{Z_k \, \mathrm{cond}_{\infty}(A)^{k-1}}\leq 
			Q_{\theta}^{p} \leq  Z_k\, \mathrm{cond}_{\infty}(A)^{k-1}.
			$$
		\end{enumerate}
	\end{theorem}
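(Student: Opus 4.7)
The plan is to express $Q_{\theta}^{p}$ as a product of the already-bounded quotient $Q_{\theta}^{a}$ and the norm ratio $\|M_A(P)\|_{\infty}/\|P\|_{\infty}$, and then bound this ratio using Proposition \ref{cotaMobius}.

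First, by Definition \ref{weights}(2.), the relative-with-respect-to-the-norm condition number factors as $\kappa_{\theta}^{p}((\alpha_0,\beta_0),P) = \|P\|_{\infty}\,\kappa_{\theta}^{a}((\alpha_0,\beta_0),P)$, and analogously for $M_A(P)$. Substituting into Definition \ref{notation-hom3}(2.) gives immediately
\[
Q_{\theta}^{p} \;=\; \frac{\|M_A(P)\|_{\infty}}{\|P\|_{\infty}}\,Q_{\theta}^{a}.
\]
So the task reduces to bounding the norm ratio above and below and then multiplying by the bounds on $Q_{\theta}^{a}$ already proved in Theorem \ref{main-homo0}.

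Second, I would apply Proposition \ref{cotaMobius} (inequality \eqref{normBi}) to $P$: for every $\ell$, $\|\widetilde{B}_\ell\|_2 \le \|A\|_\infty^{k}\binom{k}{\lfloor k/2\rfloor}\sum_{i=0}^{k}\|B_i\|_2 \le (k+1)\binom{k}{\lfloor k/2\rfloor}\|A\|_\infty^{k}\|P\|_\infty$, so
\[
\|M_A(P)\|_{\infty} \;\le\; (k+1)\binom{k}{\lfloor k/2\rfloor}\|A\|_{\infty}^{k}\,\|P\|_{\infty}.
\]
For the reverse direction I use Proposition \ref{prop-mobius}(2.--3.) to write $P = M_{A^{-1}}(M_A(P))$ and apply the same inequality with $A$ replaced by $A^{-1}$, obtaining
\[
\|P\|_{\infty} \;\le\; (k+1)\binom{k}{\lfloor k/2\rfloor}\|A^{-1}\|_{\infty}^{k}\,\|M_A(P)\|_{\infty}.
\]
Combining,
\[
\frac{1}{(k+1)\binom{k}{\lfloor k/2\rfloor}\|A^{-1}\|_{\infty}^{k}} \;\le\; \frac{\|M_A(P)\|_{\infty}}{\|P\|_{\infty}} \;\le\; (k+1)\binom{k}{\lfloor k/2\rfloor}\|A\|_{\infty}^{k}.
\]

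Finally, multiplying these inequalities by the upper and lower bounds on $Q_{\theta}^{a}$ from Theorem \ref{main-homo0} yields the claimed result. For $k=1$, one has $\binom{1}{0}=1$ and $(k+1)=2$, so the norm-ratio bound is at most $2\|A\|_\infty$ and at least $1/(2\|A^{-1}\|_\infty)$; multiplying by $Q_{\theta}^{a}\le 2\|A^{-1}\|_\infty$ and $Q_{\theta}^{a}\ge 1/(2\|A\|_\infty)$ gives the upper and lower bounds $4\,\mathrm{cond}_\infty(A)$ and $1/(4\,\mathrm{cond}_\infty(A))$, respectively. For $k\ge 2$, multiplying $(k+1)\binom{k}{\lfloor k/2\rfloor}\|A\|_\infty^{k}$ by $S_k\,\|A^{-1}\|_\infty^{k-1}/\|A\|_\infty$ produces $4(k+1)^2\binom{k}{\lfloor k/2\rfloor}\|A\|_\infty^{k-1}\|A^{-1}\|_\infty^{k-1} = Z_k\,\mathrm{cond}_\infty(A)^{k-1}$, and the analogous computation for the lower bound yields $1/(Z_k\,\mathrm{cond}_\infty(A)^{k-1})$.

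I do not anticipate a real obstacle: the argument is a clean two-step reduction (relate $Q_{\theta}^{p}$ to $Q_{\theta}^{a}$, then control the norm ratio via Proposition \ref{cotaMobius}). The only point requiring care is bookkeeping of constants, in particular verifying that the factors $(k+1)\binom{k}{\lfloor k/2\rfloor}$ and $S_k=4(k+1)$ combine exactly into $Z_k = 4(k+1)^2\binom{k}{\lfloor k/2\rfloor}$, and that the powers of $\|A\|_\infty$ and $\|A^{-1}\|_\infty$ balance to give $\mathrm{cond}_\infty(A)^{k-1}$ symmetrically on both sides.
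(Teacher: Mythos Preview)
Your proposal is correct and follows essentially the same approach as the paper: write $Q_{\theta}^{p} = Q_{\theta}^{a}\cdot \|M_A(P)\|_\infty/\|P\|_\infty$, bound the norm ratio via Proposition~\ref{cotaMobius}, and combine with the bounds on $Q_{\theta}^{a}$ from Theorem~\ref{main-homo0}. The only cosmetic difference is that the paper derives just the upper bound this way and obtains the lower bound by the symmetry argument of \eqref{eq.invlowbounds}, whereas you derive both sides directly; the constant bookkeeping you carried out is correct.
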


	\begin{proof} We only prove the upper bounds, since the lower bounds can be obtained from the upper bounds using an argument similar to the one used in \eqref{eq.invlowbounds}.
		Notice that parts (1.) and (2.) in Corollary \ref{quotient-expr-2} and Proposition \ref{cotaMobius} imply
		\begin{equation} \label{eq.factatop}
		Q_{\theta}^{p}=Q_{\theta}^{a} \, \frac{\max \limits_{i=0:k}\{\|\widetilde B_i\|_2\}}{\max \limits_{i=0:k}\{\|B_i\|_2\}} \leq Q_{\theta}^{a} \,(k+1){k \choose \lfloor k/2 \rfloor}\|A\|_{\infty}^k.
		\end{equation}
		Now, the upper bounds follow from the upper bounds on $Q_{\theta}^{a}$ in Theorem \ref{main-homo0}. \end{proof}
	%	To find the lower bound, note that, using an argument similar to that used for %the upper bound, we have 
	%	\begin{equation}
	%	\label{eq-cota3-t2}
	%	\frac{\max_{i=0:k}\{\| B_i\|_2\}}{\max_{i=0:k}\{\|\tilde{B_i}\|_2\}}\leq
	%	(k+1){k \choose \lfloor k/2 \rfloor}\|A^{-1}\|_M^k.
	%	\end{equation}
	
	%	Then, combining the upper and lower  bound for $Q_{\theta}^a$ from Theorem %\ref{main-homo0} with \eqref{eq-cota-t2} and (\ref{eq-cota3-t2}), the  upper and %lower bounds for $Q_{\theta}^p$ follow for both  $k=1$ and $k\geq 2$. %To prove the %second upper bounds, ???we just need to use Lemma \ref{quotient-norm-eig}. %???%take %into account that for  $[\gamma_0, \delta_0]:=[d\alpha_0-b\beta_0, %a\beta_0-c\alpha_0]$, we have
	%	\begin{align}\label{cotaf}
	%	f_A^{(\alpha_0, \beta_0)} = \frac{\max\{|a\gamma_0+b\delta_0|, |c\gamma_0+d\delta_0|\}}{|det(A)|\max\{|\gamma_0|, |\delta_0|\}}\leq \frac{2\|A\|_M}{|det(A)|}.
	%	\end{align}
	%\end{proof}
	
	\begin{remark}\label{rem-hom-1} (Discussion on the bounds in Theorem \ref{main-homo1}) The first observation on the bounds presented in Theorem \ref{main-homo1} is that the factor $Z_k$, depending only on the degree $k$ of $P$, becomes very large even for moderate values of $k$ (consider, for instance, $k=15$). This fact makes the lower and upper bounds very different from each other, even for matrices $A$ whose condition number is  close to 1, and, so, Theorem \ref{main-homo1} is useless for large $k$. However, we will see in the numerical experiments in Section \ref{sec:num} that the factor $Z_k$ is very pessimistic, i.e., although there is an observable dependence of the true values of the quotient $Q^p_{\theta}$ (and also of  $Q^r_{\theta}$) on $k$, such dependence is much smaller than the one predicted by $Z_k$. Moreover, in many important applications of matrix polynomials, $k$ is very small and so is $Z_k$ \cite{NLEVP-2013}. For instance, the linear case $k=1$ (generalized eigenvalue problem) and the quadratic case $k=2$ (quadratic eigenvalue problem) are particularly important. Therefore, in the informal discussion in this remark the factor $Z_k$ is ignored, but the reader should bear in mind that the obtained conclusions only hold rigorously for small degrees $k$.
		
		In our opinion, Theorem \ref{main-homo1} is the most illuminating result in this paper because it refers to the comparison of condition numbers that are very interesting in numerical applications (recall the comments in the paragraph just after Definition \ref{weights}) and also because it delivers a very clear sufficient condition that guarantees that the M\"{o}bius transformation $M_A$ \emph{does not change significantly the relative-with-respect-to-the-norm-of-the-polynomial eigenvalue condition number of any simple eigenvalue  of any matrix polynomial}. This sufficient condition is simply that the matrix $A$ is well-conditioned, since $\mathrm{cond}_{\infty}(A) \approx 1$ if and only if the lower and upper bounds in Theorem \ref{main-homo1} are moderate numbers. Notice that the very important  Cayley transformations in Definition \ref{cayley} satisfy $\mathrm{cond}_{\infty}(A) =2$ and that the reversal M\"obius transformation in Example \ref{ex-reversal} satisfies $\mathrm{cond}_{\infty}(R) =1$.
	\end{remark}
	\smallskip
	
	In the last part of this subsection, we present and discuss the bounds on $Q_{\theta}^r$. As previously announced, these bounds depend on $A$, $P$, and $M_A(P)$ and, so, are qualitatively different from the bounds on $Q_{\theta}^a$ and  $Q_{\theta}^p$ presented in Theorems \ref{main-homo0} and \ref{main-homo1}, which only depend on $A$ and the degree $k$ of $P$. In order to simplify the bounds, we will assume that the matrix coefficients with indices 0 and $k$   of $P$  and $M_A(P)$ (i.e. $B_0$,  $B_k$, $\widetilde{B}_0$ and $\widetilde{B}_k$) are different from zero, which covers the most interesting cases in applications.
	
	\begin{theorem}\label{main-homo}
		Let $P(\alpha,\beta)=\sum_{i=0}^k \alpha^i \beta^{k-i}B_i \in \mathbb{C}[\alpha , \beta]^{n \times n}_k$ be a regular homogeneous matrix polynomial and let $A \in GL(2, \mathbb{C})$. Let $(\alpha_0, \beta_0)$ be a simple eigenvalue of $P(\alpha,\beta)$ and let $\langle A^{-1}[\alpha_0, \beta_0]^T \rangle$ be the eigenvalue of $M_A(P) (\gamma,\delta)=\sum_{i=0}^k \gamma^i \delta^{k-i} \widetilde{B}_i$ associated with $(\alpha_0, \beta_0)$. Let $Q_{\theta}^{r}$ be the relative quotient in Definition \ref{notation-hom3}(3.) and let $Z_k:=4(k+1)^{2}{k \choose \lfloor k/2\rfloor}$. Assume that $B_0 \ne 0, B_k \ne 0, \widetilde{B}_0 \ne 0$, and $\widetilde{B}_k \ne 0$ and define
		\begin{equation}\label{rho} \rho:= \frac{ \max \limits_{i=0:k}\{\|B_i\|_2\}}{ \min\{\|B_0\|_2, \|B_k\|_2\}}, \ \ \
		\widetilde{\rho}:=\frac{ \max \limits_{i=0:k}\{\|\widetilde B_i\|_2\}}{ \min\{\|\widetilde B_0\|_2, \|\widetilde B_k\|_2\}}.
		\end{equation}
		\begin{enumerate}
			\item If $k=1$, then
			$$\frac{1}{4 \; \mathrm{cond}_{\infty}(A)\, \widetilde{\rho}} \leq Q_{\theta}^{r}\leq  4\, \mathrm{cond}_{\infty}(A) \, \rho.$$
			%	$\bullet$ If $k =2$, then
			%	$$\frac{1}{12\sqrt{3}\;  cond_M(A)}\leq Q_{\theta}^{c} \leq 12 \sqrt{3} \; cond_M(A).$$
			\item If $k\geq 2$, then
			$$\frac{1}{Z_k \, \mathrm{cond}_{\infty}(A)^{k-1} \, \widetilde{\rho}}\leq
			Q_{\theta}^{r}\leq  Z_k\; \mathrm{cond}_{\infty}(A)^{k-1}\, \rho.$$
		\end{enumerate}
	\end{theorem}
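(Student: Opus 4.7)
My plan is to start from the derivative-free formula \eqref{Qrah} in Theorem~\ref{quotient-expr-1}, specialized with $\omega_i=\|B_i\|_2$ and $\widetilde{\omega}_i=\|\widetilde B_i\|_2$ as in Corollary~\ref{quotient-expr-2}(3), and to combine it with the coefficient-norm bound from Proposition~\ref{cotaMobius} and the purely geometric estimates already developed in the proof of Theorem~\ref{main-homo0}. For the upper bound, I would dominate the numerator $\sum_i|\gamma_0|^i|\delta_0|^{k-i}\|\widetilde B_i\|_2$ by pulling out $\max_i\|\widetilde B_i\|_2\le(k+1)\binom{k}{\lfloor k/2\rfloor}\|A\|_\infty^k\max_j\|B_j\|_2$ (which follows from Proposition~\ref{cotaMobius} after using $\sum_j\|B_j\|_2\le(k+1)\max_j\|B_j\|_2$), and bound the denominator from below by retaining only the $i=0$ and $i=k$ terms, so that $\sum_i|\alpha_0|^i|\beta_0|^{k-i}\|B_i\|_2\ge\min\{\|B_0\|_2,\|B_k\|_2\}(|\alpha_0|^k+|\beta_0|^k)$. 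The ratio of these two coefficient-norm factors is exactly $\rho$, so $\rho$ drops out of the analysis naturally.

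For $k\ge 2$, what remains after this reduction is the geometric factor
\[
\frac{1}{|\det(A)|}\cdot\frac{\sum_i|\gamma_0|^i|\delta_0|^{k-i}}{|\alpha_0|^k+|\beta_0|^k}\cdot\frac{|\alpha_0|^2+|\beta_0|^2}{|\gamma_0|^2+|\delta_0|^2}.
\]
I would dominate it in the infinity norm, mimicking the chain \eqref{casek2-2}--\eqref{casek2-4}: use $\sum_i|\gamma_0|^i|\delta_0|^{k-i}\le(k+1)\|[\gamma_0,\delta_0]^T\|_\infty^k$, $|\alpha_0|^k+|\beta_0|^k\ge\|[\alpha_0,\beta_0]^T\|_\infty^k$, $|\alpha_0|^2+|\beta_0|^2\le 2\|[\alpha_0,\beta_0]^T\|_\infty^2$, and $|\gamma_0|^2+|\delta_0|^2\ge\|[\gamma_0,\delta_0]^T\|_\infty^2$, together with $\|[\gamma_0,\delta_0]^T\|_\infty=\|A^{-1}[\alpha_0,\beta_0]^T\|_\infty\le\|A^{-1}\|_\infty\|[\alpha_0,\beta_0]^T\|_\infty$. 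Finally, $1/|\det(A)|\le 2\|A^{-1}\|_\infty/\|A\|_\infty$, which comes from \eqref{quotient-norm-eig} together with $\|A\|_1\ge\|A\|_\infty/2$, caps the geometric factor by $4(k+1)\|A^{-1}\|_\infty^{k-1}/\|A\|_\infty$. Multiplying this by the $(k+1)\binom{k}{\lfloor k/2\rfloor}\|A\|_\infty^k\rho$ from the coefficient step produces exactly $Z_k\,\mathrm{cond}_\infty(A)^{k-1}\rho$. The case $k=1$ uses the same ideas directly on the two-term sums; the geometric factor is controlled, as in the proof of Theorem~\ref{main-homo0}(1), by $\frac{(|\gamma_0|+|\delta_0|)(|\alpha_0|^2+|\beta_0|^2)}{(|\alpha_0|+|\beta_0|)(|\gamma_0|^2+|\delta_0|^2)}\le 2\|A^{-1}\|_\infty|\det(A)|$, which combined with the coefficient step delivers the sharper $4\,\mathrm{cond}_\infty(A)\,\rho$.

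The lower bounds then follow by the same duality argument used in \eqref{eq.invlowbounds}: since $P=M_{A^{-1}}(M_A(P))$ by Proposition~\ref{prop-mobius}(3), the reciprocal $1/Q_\theta^r$ is itself a relative quotient of condition numbers for the pair $(M_A(P),A^{-1})$, with the roles of $\rho$ and $\widetilde{\rho}$ interchanged; since $\mathrm{cond}_\infty(A^{-1})=\mathrm{cond}_\infty(A)$, applying the upper bound already proved to $1/Q_\theta^r$ yields the stated lower bounds in both the $k=1$ and $k\geq 2$ cases. The main obstacle I anticipate is keeping the constants sharp enough to recover the exact factor $Z_k$: a naive argument that routes $Q_\theta^r$ through $Q_\theta^p$ in the spirit of the proof of Theorem~\ref{main-homo1} loses an extra factor of $k+1$, because the lower bound $\min\{\|B_0\|_2,\|B_k\|_2\}(|\alpha_0|^k+|\beta_0|^k)$ only picks up two of the $k+1$ terms in $\sum_i|\alpha_0|^i|\beta_0|^{k-i}\|B_i\|_2$. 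Handling the geometric factor directly in the infinity norm, as outlined above, is what prevents this loss.
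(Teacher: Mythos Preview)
Your proposal is correct and follows essentially the same route as the paper's proof: both extract the factor $\max_i\|\widetilde B_i\|_2/\min\{\|B_0\|_2,\|B_k\|_2\}$ from the weighted sums in \eqref{Qrah}, bound the residual geometric factor exactly via the chain \eqref{Qa-k1-1}--\eqref{Qa-k1-2} for $k=1$ and \eqref{casek2-2}--\eqref{casek2-4} for $k\ge 2$, invoke Proposition~\ref{cotaMobius} to introduce $\rho$, and obtain the lower bounds by the duality argument \eqref{eq.invlowbounds}. Your remark that routing through $Q_\theta^p$ would cost an extra $(k+1)$ is also on point and explains why the paper, like you, handles the geometric factor directly.
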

	
	\begin{proof}We only prove the upper bounds, since the lower bounds can be obtained
		from the upper bounds using a similar argument to that used in \eqref{eq.invlowbounds}.
		
		Let $A =\left[ \begin{array}{cc} a & b \\c & d \end{array} \right]$. Select an arbitrary representative $[\alpha_0 , \beta_0]^T$ of $(\alpha_0 , \beta_0)$, and consider the representative $[\gamma_0 , \delta_0]^T :=A^{-1}[\alpha_0, \beta_0]^T=\left [ \frac{d\alpha_0-b\beta_0}{\mathrm{det}(A)}, \frac{a\beta_0-c\alpha_0}{\mathrm{det}(A)} \right ]$ of the eigenvalue of $M_A(P)$ associated with $(\gamma_0, \delta_0)$.

		If $k=1$, then, from Corollary \ref{quotient-expr-2}(3.), \eqref{Qa-k1-1} and \eqref{Qa-k1-2}, we obtain
		\[
		Q_{\theta}^{r}  \leq \frac{ \max \limits_{i=0:k}\{\|\widetilde{B}_i\|_2\}}{ \min\{\|B_0\|_2, \|B_k\|_2\}} \, Q_{\theta}^a \leq \frac{ \max \limits_{i=0:k}\{\|\widetilde{B}_i\|_2\}}{ \min\{\|B_0\|_2, \|B_k\|_2\}} \, 2\, \|A^{-1}\|_{\infty}.
		\]
		Proposition \ref{cotaMobius} implies $\max \limits_{i=0:k}\{\|\widetilde{B}_i\|_2\} \leq 2\, \|A\|_{\infty}\,\max \limits_{i=0:k}\{\|B_i\|_2\}$, which combined with the previous inequality yields the upper bound for $k=1$.
		
		If $k\geq 2$, then, from Corollary \ref{quotient-expr-2}(3.) and the inequalities \eqref{casek2-2} and \eqref{casek2-4}, we get
		\begin{align} \nonumber
		Q_{\theta}^{r} & \leq \frac{ \max \limits_{i=0:k}\{\|\widetilde{B}_i\|_2\}}{ \min\{\|B_0\|_2, \|B_k\|_2\}} \, \frac{(k+1)\ }{|\mathrm{det}(A)|}\frac{\max\{|\gamma_0|^k, |\delta_0|^k\}}{(|\alpha_0|^{k}+|\beta_0|^{k})}\frac{2 \max\{|\beta_0|^2,|\alpha_0|^2\}}{\max\{|\delta_0|^2,|\gamma_0|^2\}} \\ & \leq \frac{ \max \limits_{i=0:k}\{\|\widetilde{B}_i\|_2\}}{ \min\{\|B_0\|_2, \|B_k\|_2\}} \,
		2(k+1) \, \frac{ \|A^{-1}\|_{\infty}^{k-1}}{\|A\|_{1} }, \nonumber
		\end{align}
		which combined with Proposition \ref{cotaMobius} and $\|A\|_1 \geq \|A\|_\infty/2$ yields the upper bound for $k\geq 2$.
	\end{proof}

	\begin{remark}\label{bounds2-Qhr} (Discussion on the bounds in Theorem \ref{main-homo})
		The only difference between the bounds in Theorem \ref{main-homo} and those in Theorem \ref{main-homo1} is that the former can be obtained from the latter  by   multiplying the upper bounds by $\rho$ and  dividing the lower bounds by $\widetilde \rho$. Moreover, since $\rho \geq 1$ and $\widetilde\rho \geq 1$, the bounds in Theorem \ref{main-homo} are moderate numbers if and only if the ones in Theorem \ref{main-homo1} are and $\rho \approx 1 \approx \widetilde\rho$. Thus, ignoring again the factor $Z_k$, the three conditions $\mathrm{cond}_{\infty} (A) \approx 1$,  $\rho \approx 1$, and $\widetilde\rho \approx 1$ are sufficient to imply that all the bounds in Theorem \ref{main-homo} are moderate numbers and guarantee that the M\"{o}bius
		transformation $M_A$ \emph{does not change significantly the relative eigenvalue condition numbers of any eigenvalue of a matrix polynomial $P$ satisfying   $\rho \approx 1$ and $\widetilde\rho \approx 1$}. Note that the presence of $\rho$ and $\widetilde\rho$ is natural, since $\rho$ has appeared previously in a number of results that compare the relative eigenvalue condition numbers of a matrix polynomial and of some of its linearizations \cite{HigMacTis,bueno-calcolo-2018}.
	\end{remark}
	
	%%%%%%%%%%%%%%%%%%%%%%%%%%
	
	\subsection{Bounds involving eigenvalues for M\"obius transformations induced by ill-conditioned matrices} \label{subsec.53} The bounds in Theorem \ref{main-homo0} on $Q_{\theta}^a$ are very satisfactory under the sufficient conditions $\|A\|_{\infty} \approx \|A^{-1}\|_{\infty} \approx 1$ since, then, the lower and upper bounds are moderate numbers not far from one for moderate values of $k$. The same happens with the bounds on $Q_{\theta}^p$ in Theorem \ref{main-homo1} under the sufficient condition $\mathrm{cond}_{\infty} (A) \approx 1$, and for the bounds in Theorem \ref{main-homo} on $Q_{\theta}^r$, with the two additional conditions $\rho \approx \widetilde\rho \approx 1$. Obviously, these bounds are no longer satisfactory if $\mathrm{cond}_{\infty} (A) \gg 1$, i.e., if the M\"obius transformation is induced by an ill-conditioned matrix, since the lower and upper bounds are very different from each other and do not give any information about the true values of $Q_{\theta}^a$, $Q_{\theta}^p$, and $Q_{\theta}^r$. Note, in particular, that, for any ill-conditioned $A$, the upper bounds in Theorems \ref{main-homo1} and \ref{main-homo} are much larger than $1$, while the lower bounds are much smaller than $1$. 
	
	Although we do not know any M\"obius transformation $M_A$ with $\mathrm{cond}_{\infty} (A) \gg 1$ that is useful in applications and we do not see currently any reason for using such transformations, we consider them in this section for completeness and in connection with the attainability of the bounds in such situation.
	
	For brevity, we limit our discussion to the bounds on the quotients $Q_{\theta}^a$ and $Q_{\theta}^p$, since the presence of $\rho$ and $\widetilde\rho$ in Theorem \ref{main-homo} complicates the discussion on $Q_{\theta}^r$. 
	
	We start by obtaining in Theorem \ref{th.sharperquotient} 
	sharper upper and lower bounds on $Q_{\theta}^a$ and $Q_{\theta}^p$ at the cost of involving the eigenvalues in the expressions of the new  bounds. The reader will notice that in Theorem \ref{th.sharperquotient}, we are using the 1-norm for degree $k=1$ and the $\infty$-norm for degree $k\geq 2$. The reason for these different choices of norms is that they lead to sharper bounds in each case. Obviously, in the case $k=1$, we can also use the  $\infty$-norm at the cost of worsening somewhat the bounds on $Q_{\theta}^a$ and $Q_{\theta}^p$.
	
	\begin{theorem}\label{th.sharperquotient}
		Let $P(\alpha,\beta)= \sum_{i=0}^k \alpha^i \beta^{k-i} B_i \, {\in \mathbb{C}[\alpha,\beta]^{n\times n}_k}$ be a regular homogeneous matrix polynomial and let $A=\left[\begin{array}{cc} a & b \\ c & d\end{array} \right] \in GL(2, \mathbb{C})$. Let $M_A(P)(\gamma, \delta)= \sum_{i=0}^k \gamma^i \delta^{k-i} {\widetilde{B}_i  \in \mathbb{C}[\alpha,\beta]^{n\times n}_k}$ be the M\"obius transform of $P(\alpha, \beta)$ under $M_A$. Let $(\alpha_0, \beta_0)$ be a simple eigenvalue of $P(\alpha, \beta)$ and let $\langle A^{-1} [\alpha_0, \beta_0]^T \rangle $ be the eigenvalue of $M_A(P)$ associated with $(\alpha_0, \beta_0)$. Let $[\alpha_0, \beta_0]^T$ be an arbitrary representative of   $(\alpha_0, \beta_0)$ and let $[\gamma_0, \delta_0]^T  :=A^{-1}[\alpha_0, \beta_0]^T$  be the associated representative of $\langle A^{-1} [\alpha_0, \beta_0]^T \rangle$. Let $Q_{\theta}^a$ and $Q_{\theta}^p$ be the quotients in Definition \ref{notation-hom3}(1.) and (2.), respectively. 
		\begin{enumerate}
			\item If $k=1$, then 
			\begin{eqnarray*}
				\frac{1}{2|\mathrm{det}(A)|}  \frac{\|[ \alpha_0, \beta_0]^T\|_{1}}{\|[\gamma_0, \delta_0]^T\|_{1}}  \leq &  Q_{\theta}^{a} &\leq   \frac{2}{|\mathrm{det}(A)|} \frac{\|[ \alpha_0, \beta_0]^T\|_{1}}{\|[\gamma_0, \delta_0]^T\|_{1}}, \\
				\frac{1}{2|\mathrm{det}(A)|}  \frac{\|[ \alpha_0, \beta_0]^T\|_{1}}{\|[\gamma_0, \delta_0]^T\|_{1}}  \, \frac{\max \limits_{i=0:k}\{\|\widetilde B_i\|_2\}}{\max \limits_{i=0:k}\{\|B_i\|_2\}} \leq &  Q_{\theta}^{p} &\leq   \frac{2}{ |\mathrm{det}(A)|} \frac{\|[ \alpha_0, \beta_0]^T\|_{1}}{\|[\gamma_0, \delta_0]^T\|_{1}} \, \frac{\max \limits_{i=0:k}\{\|\widetilde B_i\|_2\}}{\max \limits_{i=0:k}\{\|B_i\|_2\}}.
			\end{eqnarray*}
			\item If $k\geq 2$, then 
		\end{enumerate}
		\begin{align*}
		& \frac{1}{2(k+1)\, |\mathrm{det}(A)|}  \left(\frac{\|[ \gamma_0, \delta_0]^T\|_{\infty}}{\|[\alpha_0, \beta_0]^T\|_{\infty}}\right)^{k-2}  \leq   Q_{\theta}^{a} \leq   \frac{2(k+1)\ }{|\det(A)|} \left(\frac{\|[ \gamma_0, \delta_0]^T\|_{\infty}}{\|[\alpha_0, \beta_0]^T\|_{\infty}}\right)^{k-2}, \\
		& \phantom{aaaaaaaaa} \frac{1}{2(k+1)\, |\det(A)|}  \left(\frac{\|[ \gamma_0, \delta_0]^T\|_{\infty}}{\|[\alpha_0, \beta_0]^T\|_{\infty}}\right)^{k-2}    \, \frac{\max \limits_{i=0:k}\{\|\widetilde B_i\|_2\}}{\max \limits_{i=0:k}\{\|B_i\|_2\}} \leq Q_{\theta}^{p},\\
		&  \phantom{aaaaaaaaa} Q_{\theta}^{p} \leq  \frac{2(k+1)\ }{|\det(A)|} \left(\frac{\|[ \gamma_0, \delta_0]^T\|_{\infty}}{\|[\alpha_0, \beta_0]^T\|_{\infty}}\right)^{k-2}  \, \frac{\max \limits_{i=0:k}\{\|\widetilde B_i\|_2\}}{\max \limits_{i=0:k}\{\|B_i\|_2\}}. 
		\end{align*}
		Moreover, the bounds in this theorem are sharper than those in Theorems \ref{main-homo0} and \ref{main-homo1}. That is, each upper (resp. lower) bound in the previous inequalities is smaller (resp. larger) than or equal to the corresponding upper (resp. lower) bound in Theorems \ref{main-homo0} and \ref{main-homo1}.%, and  each lower bound in the previous inequalities is larger than or equal to the corresponding lower bound in Theorems \ref{main-homo0} and \ref{main-homo1}. 
		%That is, the bounds in this theorem are sharper than those in Theorems \ref{main-homo0} and \ref{main-homo1}.	    
	\end{theorem}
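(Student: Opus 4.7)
The plan is to derive the bounds from the derivative-free formula \eqref{Qrah} for $Q_\theta$ in Theorem \ref{quotient-expr-1}, interpreting the sums and the factor $|\alpha_0|^2+|\beta_0|^2$ (respectively $|\gamma_0|^2+|\delta_0|^2$) as norms of the vectors $[\alpha_0,\beta_0]^T$ and $[\gamma_0,\delta_0]^T$, and then invoking the standard norm equivalences in $\mathbb{C}^2$.

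For $k=1$, I would specialize \eqref{Qrah} with the absolute weights $\omega_i=\widetilde\omega_i=1$ to obtain the identity already appearing in \eqref{Qa-k1-1}, namely
\[
Q_\theta^a=\frac{1}{|\mathrm{det}(A)|}\,\frac{\|[\gamma_0,\delta_0]^T\|_1\,\|[\alpha_0,\beta_0]^T\|_2^{2}}{\|[\alpha_0,\beta_0]^T\|_1\,\|[\gamma_0,\delta_0]^T\|_2^{2}}.
\]
Applying the two-sided inequality $\tfrac{1}{\sqrt 2}\|x\|_1\le\|x\|_2\le\|x\|_1$ to both 2-norms in the numerator and denominator yields simultaneously the claimed upper and lower bounds for $Q_\theta^a$. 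The bounds for $Q_\theta^p$ then follow at once from the identity $Q_\theta^p=Q_\theta^a\,(\max_i\|\widetilde B_i\|_2/\max_i\|B_i\|_2)$, already used in \eqref{eq.factatop}.

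For $k\ge 2$, using again \eqref{Qrah} with $\omega_i=\widetilde\omega_i=1$, I would sandwich the sums appearing in \eqref{Qrah} between $\|\cdot\|_\infty^k$ and $(k+1)\|\cdot\|_\infty^k$ via the elementary estimate
\[
\max\{|\gamma_0|^k,|\delta_0|^k\}\;\le\;\sum_{i=0}^k|\gamma_0|^i|\delta_0|^{k-i}\;\le\;(k+1)\max\{|\gamma_0|^k,|\delta_0|^k\},
\]
and analogously for $[\alpha_0,\beta_0]^T$. The remaining quotient $(|\alpha_0|^2+|\beta_0|^2)/(|\gamma_0|^2+|\delta_0|^2)$ is controlled by $\|x\|_\infty\le\|x\|_2\le\sqrt 2\,\|x\|_\infty$. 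Combining these estimates produces the bounds for $Q_\theta^a$ in the case $k\ge 2$; the $Q_\theta^p$ bounds again follow by multiplication with the coefficient-norm ratio.

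To establish the final sharpness claim I would apply the vector-matrix bounds induced by $[\gamma_0,\delta_0]^T=A^{-1}[\alpha_0,\beta_0]^T$ and $[\alpha_0,\beta_0]^T=A[\gamma_0,\delta_0]^T$. For $k=1$, these give $\|[\alpha_0,\beta_0]^T\|_1/\|[\gamma_0,\delta_0]^T\|_1\le\|A\|_1$ and, reciprocally, $\ge 1/\|A^{-1}\|_1$; combined with \eqref{quotient-norm-eig}, this converts the present bounds into those of Theorem \ref{main-homo0}(1.). For $k\ge 2$, the analogous inequalities $\|[\gamma_0,\delta_0]^T\|_\infty/\|[\alpha_0,\beta_0]^T\|_\infty\le\|A^{-1}\|_\infty$ and $\ge 1/\|A\|_\infty$, together with $\|A\|_1\ge\|A\|_\infty/2$ and \eqref{quotient-norm-eig}, recover Theorem \ref{main-homo0}(2.); finally, multiplying by $\max_i\|\widetilde B_i\|_2/\max_i\|B_i\|_2$ and using the bound \eqref{normBi} of Proposition \ref{cotaMobius} yields Theorem \ref{main-homo1}. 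The main (and only mild) obstacle is bookkeeping the norm equivalences carefully so that the constants line up with those in Theorems \ref{main-homo0} and \ref{main-homo1}; no new ideas beyond \eqref{Qrah}, Proposition \ref{cotaMobius}, and the two-dimensional norm equivalences are required.
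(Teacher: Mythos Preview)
Your proposal is correct and follows essentially the same route as the paper: the paper's proof also derives everything from the derivative-free expression \eqref{Qrah} (via the intermediate inequalities \eqref{Qa-k1-1}--\eqref{Qa-k1-2} and \eqref{casek2-2}--\eqref{casek2-3} already established in the proof of Theorem \ref{main-homo0}), the identity in \eqref{eq.factatop}, and the two-dimensional norm equivalences, and handles the sharpness claim exactly as you describe by bounding the norm ratios via $[\gamma_0,\delta_0]^T=A^{-1}[\alpha_0,\beta_0]^T$ together with \eqref{quotient-norm-eig} and Proposition \ref{cotaMobius}. The only cosmetic difference is that the paper refers back to those labeled inequalities rather than rewriting them, whereas you spell out the norm sandwiching directly.
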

	%$\left [\frac{d\alpha_0 - b \beta_0}{\mathrm{det}(A)}, \frac{a \beta_0 - c \alpha_0}{\mathrm{det}(A)} \right]$
	
	\begin{proof}
		We only need to prove the bounds for $Q^a_{\theta}$. The bounds for $Q^p_{\theta}$ follow immediately from the bounds for $Q^a_{\theta}$ and the  equality in \eqref{eq.factatop}. For $k=1$, the upper bound on $Q^a_{\theta}$ can be obtained from \eqref{Qa-k1-2}; %and the fact that $\|x\|_{\infty} \leq \|x\|_2\leq  \sqrt{2} \|x\|_{\infty}$ for all vectors $x$???;  
		the lower bound follows easily from  \eqref{Qa-k1-1} through an argument similar to the one leading to the upper bound. For $k\geq 2$, the upper bound on $Q^a_{\theta}$ is just \eqref{casek2-3}, and the lower bound follows easily from Corollary \ref{quotient-expr-2}(1.) and \eqref{Qrah} through an argument similar to the one leading to the upper bound.
		
		Next, we prove that the bounds in this theorem are sharper than those in Theorems \ref{main-homo0} and \ref{main-homo1}. For the upper bounds in Theorem \ref{main-homo0}, this follows from \eqref{Qa-k1-2} for $k=1$ and the inequality  \eqref{casek2-4} for $k\geq 2$. The corresponding results for the lower bounds in Theorem \ref{main-homo0} follow from a similar argument. Note that the bounds in Theorem \ref{main-homo1} can be obtained from the ones in this theorem in two steps: first bounds on $\|[ \alpha_0, \beta_0]^T\|_{1} / \|[\gamma_0, \delta_0]^T\|_{1}$, for $k=1$, and on $\|[ \gamma_0, \delta_0]^T\|_{\infty}/\|[\alpha_0, \beta_0]^T\|_{\infty}$, for $k\geq 2$, are obtained  and, then, upper and lower bounds on  \\$\max \limits_{i=0:k}\{\|\widetilde B_i\|_2\} / \max \limits_{i=0:k}\{\|B_i\|_2\}$ are obtained from Proposition \ref{cotaMobius} (the lower bounds are obtained by interchanging the roles of $B_i$ and $\widetilde{B}_i$ and by replacing $A$ by $A^{-1}$, since $P = M_{A^{-1}} (M_A (P))$). This proves that the bounds in this theorem are sharper than those in Theorems \ref{main-homo0} and \ref{main-homo1}.
	\end{proof}
	
	Observe that, from Theorem \ref{th.sharperquotient}, we obtain that, disregarding the (pessimistic and moderate) factors in the bounds depending only on the degree $k$,
	\begin{equation} \label{eq.1approxseillcond}
	Q_{\theta}^{a} \approx \frac{1}{|\mathrm{det}(A)|}\frac{\|[ \alpha_0, \beta_0]^T\|_{1}}{\|[\gamma_0, \delta_0]^T\|_{1}}, \:  Q_{\theta}^{p} \approx  \frac{1}{|\mathrm{det}(A)|}\frac{\|[ \alpha_0, \beta_0]^T\|_{1}}{\|[\gamma_0, \delta_0]^T\|_{1}} \, \frac{\max \limits_{i=0:k}\{\|\widetilde B_i\|_2\}}{\max \limits_{i=0:k}\{\|B_i\|_2\}}, \quad \mbox{for $k=1$},
	\end{equation}
	and
	\begin{align} \label{eq.2approxseillcond}
	Q_{\theta}^{a} &\approx   \frac{1}{|\det(A)|} \left(\frac{\|[ \gamma_0, \delta_0]^T\|_{\infty}}{\|[\alpha_0, \beta_0]^T\|_{\infty}}\right)^{k-2},  \qquad \mbox{for $k\geq 2$}, \\ \label{eq.3approxseillcond} Q_{\theta}^{p} & \approx  \frac{1}{|\det(A)|} \left(\frac{\|[ \gamma_0, \delta_0]^T\|_{\infty}}{\|[\alpha_0, \beta_0]^T\|_{\infty}}\right)^{k-2}  \, \frac{\max \limits_{i=0:k}\{\|\widetilde B_i\|_2\}}{\max \limits_{i=0:k}\{\|B_i\|_2\}},  \qquad \mbox{for $k\geq 2$}.
	\end{align}
	
	The approximate equalities \eqref{eq.1approxseillcond}, \eqref{eq.2approxseillcond}, and \eqref{eq.3approxseillcond} are much simpler than the exact expressions for the quotients $Q^a_{\theta}$ and $Q^p_{\theta}$ given by (\ref{Qrah}), using the appropriate weights (see Corollary \ref{quotient-expr-2}), and reveal clearly when the lower and upper bounds in Theorems \ref{main-homo0} and \ref{main-homo1} are attained. An analysis of these approximate expressions leads to some interesting conclusions that are informally discussed below. Throughout this  discussion, we often use expressions similar to ``this bound is essentially attained'' with the meaning that it is attained disregarding the factors $S_k$ and $Z_k$ appearing in Theorems \ref{main-homo0} and \ref{main-homo1}. Also, for brevity, when comparing the bounds for $k=1$ and $k\geq 2$ in our analysis,  we  use the fact 
	$$\frac{\|[\alpha_0, \beta_0]^T\|_1}{\|[\gamma_0, \delta_0]^T\|_1} \approx \frac{\|[\alpha_0, \beta_0]^T\|_{\infty}}{\|[\gamma_0, \delta_0]^T\|_{\infty}}$$
	without saying it explicitly.

	\begin{enumerate}[leftmargin=*]
		\item \emph{The bounds in Theorem \ref{main-homo0} on $Q_{\theta}^a$ are essentially optimal} in the following sense: for a fixed matrix $A$ (which is otherwise arbitrary, and so, it may be very ill-conditioned), it is always possible to find  regular matrix polynomials with simple eigenvalues for which the upper bounds are essentially attained; the same happens with the lower bounds. Next we show these facts. %Note that, although $A$ is fixed, it is otherwise arbitrary and, so, it may be very ill-conditioned. 

		For $k=1$, \eqref{eq.1approxseillcond} implies that the upper (resp. lower) bound in Theorem \ref{main-homo0} is essentially attained for any regular pencil with a simple eigenvalue $(\alpha_0, \beta_0)$ satisfying 
		\begin{equation} \label{eq.attainb1}
		\frac{\|[ \alpha_0, \beta_0]^T\|_{1}}{\|[\gamma_0, \delta_0]^T\|_{1}}  = \frac{\|A A^{-1}[ \alpha_0, \beta_0]^T\|_{1}}{\| A^{-1}[\alpha_0, \beta_0]^T\|_{1}} = \|A\|_{1}.
		\end{equation}
		%  the upper bound in Theorem \ref{main-homo0} is essentially attained for any regular pencil with a simple eigenvalue $(\alpha_0, \beta_0)$ satisfying \begin{equation}\label{condexact1}\|A A^{-1} [ \alpha_0, \beta_0]^T\|_{\infty}/\allowbreak \|A^{-1}[\alpha_0, \beta_0]^T\|_{\infty} = \|A\|_\infty.\end{equation} %because $\|\mathrm{adj} (A)^{-1}\|_\infty =\|A^{-1}\|_1$ and $\|A^{-1}\|_M /\sqrt{2}\leq \|A^{-1}\|_1 \leq 2\sqrt{2} \|A^{-1}\|_M$. %= \|A\|_\infty/|\det (A)| \allowbreak \leq 2 \|A^{-1}\|_M$. 
		In contrast, the lower bound is essentially attained by any regular pencil with a simple eigenvalue $(\alpha_0, \beta_0)$ such that \begin{equation}\label{condexact2}\frac{\| A^{-1} [ \alpha_0, \beta_0]^T\|_{1}}{ \|[\alpha_0, \beta_0]^T\|_{1}} = \|A^{-1}\|_1.
		\end{equation}
		Note that,  for any positive integer $n$, a regular pencil of size $n\times n$ can be easily constructed satisfying \eqref{eq.attainb1} (resp. \eqref{condexact2}): just take a diagonal pencil with a main-diagonal entry having the desired eigenvalue as a root.
		
		For $k=2$, \eqref{eq.2approxseillcond} implies that $Q_{\theta}^a \approx 1/|\det(A)| = \|A^{-1}\|_{\infty}/\|A\|_1$. So, the quotient $Q_{\theta}^a$ is  independent of the eigenvalue and the polynomial's matrix coefficients, and is essentially always equal to both the lower and upper bounds.
		
		Finally, for $k>2$, \eqref{eq.2approxseillcond} implies that the upper (resp. lower) bound in Theorem \ref{main-homo0} is essentially attained if  the  right (resp. left) inequality in
		$$\frac{1}{\|A\|_{\infty}} \leq \frac{\|[\gamma_0, \delta_0]^T\|_{\infty}}{\|[\alpha_0, \beta_0]^T\|_{\infty}} \leq \|A^{-1}\|_{\infty}$$
		is an equality. %These two conditions hold for any regular matrix polynomial of grade $k\geq 3$ with a simple eigenvalue $(\alpha_0,\beta_0)$ satisfying \eqref{condexact1} (resp. \eqref{condexact2}). 
		Again,  for any size $n \times n$,  regular matrix polynomials with simple eigenvalues satisfying either of the two conditions can be easily constructed as diagonal matrix polynomials of degree $k$ having a main diagonal entry with the desired eigenvalue as a root.
		
		\item  From \eqref{eq.1approxseillcond} and \eqref{eq.2approxseillcond}, and the discussion above, we see that, for a fixed ill-conditioned matrix $A$ (which implies that  the upper and  lower bounds on $Q_{\theta}^a$ in Theorem \ref{main-homo0} are very far apart), \emph{the behaviours of $Q^a_{\theta}$ for $k=1$, $k=2$, and $k>2$ are very different from each other} in the following sense:  %and consider three matrix polynomials $P_1$, $P_2$ and $P_3$ of degree 1, 2, and larger than 2, respectively,   such that $(\alpha_0, \beta_0)$ is a simple eigenvalue of each of them.
		If the lower (resp. upper) bound on $Q^a_{\theta}$ given in Theorem \ref{main-homo0}  is essentially attained for an eigenvalue $(\alpha_0, \beta_0)$ when $k=1$, then the upper (resp. lower) bound on $Q^a_{\theta}$ is attained for the same eigenvalue when $k>2$ (recall that the expression for $Q_{\theta}^a$ only depends on $A$, the eigenvalue $(\alpha_0, \beta_0)$ and the degree $k$ of the matrix polynomial; also recall that for any $k$ we can construct a matrix polynomial of degree $k$ with a simple eigenvalue equal to $(\alpha_0, \beta_0)$). When $k=2$, the true value of $Q^a_{\theta}$ does not depend (essentially) on  $(\alpha_0, \beta_0)$, according to \eqref{eq.2approxseillcond}.  In this sense, the behaviours for $k=1$ and $k >2$ are opposite from each other, while the one for $k=2$ can be seen as ``neutral''.
		
		\item    \emph{The bounds in Theorem \ref{main-homo1} on $Q_{\theta}^p$ are essentially optimal} in the following sense: if the matrix $A$ is fixed, then it is always possible to find  regular matrix polynomials with a simple eigenvalue for which the upper bounds on $Q_{\theta}^p$ are essentially attained; the same happens with the lower bounds. 
		
		Here we only discuss our claim for the upper bounds on $Q_{\theta}^p$. Then, to show that the lower bounds on $Q_{\theta}^p$ can be attained,  an argument similar to that in \eqref{eq.invlowbounds} can be used.
		
		From \eqref{eq.1approxseillcond} and \eqref{eq.3approxseillcond}, for the upper bound on $Q_{\theta}^p$ to be attained, both the upper bound on $Q_{\theta}^a$ (in Theorem \ref{main-homo0}) and the upper bound on \\$\max \limits_{i=0:k}\{\|\widetilde B_i\|_2\}/\max \limits_{i=0:k}\{\|B_i\|_2\}$ (in Proposition \ref{cotaMobius}) must be attained. Thus, we need to construct a regular matrix polynomial with a simple eigenvalue for which both bounds are attained simultaneously. 
		In our discussion in item 1. above we discussed how to find an eigenvalue $(\alpha_0, \beta_0)$ attaining the upper bound on $Q_{\theta}^a$ for each value of $k$.
		In order to construct a regular matrix polynomial $P$ with $(\alpha_0, \beta_0)$ as a simple eigenvalue and such that $\max \limits_{i=0:k}\{\|\widetilde B_i\|_2\}/\max \limits_{i=0:k}\{\|B_i\|_2\} \approx \|A\|_{\infty}^k$ we proceed as follows:
		
		Let $q(\alpha,\beta)$ be any nonzero scalar polynomial of degree $k$ such that $q(\alpha_0,\beta_0) = 0$ and define $P(\alpha, \beta) = \mathrm{diag}(\varepsilon q(\alpha,\beta), Q(\alpha, \beta))=: \sum_{i=0}^k \alpha^i \beta^{k-i} B_i$,  where $\varepsilon>0$ is an arbitrarily small parameter and $Q(\alpha, \beta) = \sum_{i=0}^k \alpha^i \beta^{k-i} C_i \in \allowbreak \mathbb{C}[\alpha,\beta]^{(n-1) \times (n-1)}_k$ is a regular matrix polynomial. Then, $P(\alpha , \beta)$ is regular, and has $(\alpha_0, \beta_0)$ as a simple eigenvalue if $(\alpha_0, \beta_0)$ is not an eigenvalue of $Q(\alpha, \beta)$. Moreover, if $\varepsilon$ is sufficiently small and $\|C_\ell \|_2 := \max \limits_{i=0:k}\{\|C_i\|_2\}$, then $\max \limits_{i=0:k}\{\|B_i\|_2\} = \|B_\ell\|_2$. Let us assume, for simplicity, that  $(\alpha_0, \beta_0) \ne (1,0)$ and $(\alpha_0, \beta_0) \ne (0,1)$, although such assumption is not essential. Next, we explain how to construct $Q(\alpha, \beta)$ depending on which entry of $A$  has the largest modulus. 
		
		$\bullet$ If $\|A\|_M = |a|$, let  $Q(\alpha, \beta) := \alpha^k C_k$, where  $C_k$   is an arbitrary $(n-1) \times (n-1)$ nonsingular matrix such that,  for $\varepsilon$ small enough, $P(\alpha , \beta)$ satisfies $\|B_k\|_2 \gg \|B_i\|_2$ for $i\ne k$, and  so, \eqref{Bl} implies $\widetilde{B}_k  \approx a^k B_k$. Hence $\max_{i=0:k}\{\|\widetilde{B}_i\|_2\} \geq  \|\widetilde{B}_k\|_2 \approx |a|^k \|B_k\|_2$. By \eqref{normBi}, we have 
		$$\frac{1}{2^k} \, \|A\|_{\infty}^k \leq \|A\|_M^k \leq \frac{\max \limits_{i=0:k}\{\|\widetilde B_i\|_2\}}{\max \limits_{i=0:k}\{\|B_i\|_2\}} \leq \|A\|_{\infty}^k (k+1) {k \choose \lfloor k/2 \rfloor}. $$
		Thus, we deduce that $\max \limits_{i=0:k}\{\|\widetilde B_i\|_2\}/\max \limits_{i=0:k}\{\|B_i\|_2\} \approx \|A\|_{\infty}^k$, up to a factor depending on $k$.
		Note that $(\alpha_0, \beta_0)$ is not an eigenvalue of $Q(\alpha, \beta)$ by construction.
		
		$\bullet$ If $\|A\|_M = |b|$, then the same conclusion follows by taking again $Q(\alpha, \beta) = \alpha^k C_k$, since \eqref{Bl} implies $\widetilde{B}_0 \approx b^k B_k$. 
		
		$\bullet$ If $\|A\|_M  = |c|$, we get the desired result from taking $Q(\alpha, \beta) = \beta^k C_0$ , since \eqref{Bl} implies $\widetilde{B}_k \approx c^k B_0$. 
		
		$\bullet$ If $\|A\|_M = |d|$, take again $Q(\alpha, \beta) = \beta^k C_0$ , since \eqref{Bl} implies $\widetilde{B}_0 \approx d^k B_0$.

		\item  From \eqref{eq.1approxseillcond} and \eqref{eq.3approxseillcond}, we see that, for a fixed ill-conditioned $A$, \emph{the behaviours of $Q^p_{\theta}$ for $k=1$ and $k > 2$ are very different from each other} in the following sense:  the eigenvalues $(\alpha_0 , \beta_0)$ for which $Q_{\theta}^p$ essentially attains the upper (resp. lower) bound  given in Theorem \ref{main-homo1} for $k>2$, do not attain the upper (resp. lower) bound on $Q^p_{\theta}$ for $k=1$. Notice, for example, that if $Q_{\theta}^p$ attains the upper bound for some polynomial of degree $k>2$ having $(\alpha_0, \beta_0)$ as a simple eigenvalue, then $\frac{\|[\gamma_0, \delta_0]^T\|_{\infty}}{\|[\alpha_0, \beta_0]^T\|_{\infty}}\approx \|A^{-1}\|_{\infty}$ and $\max \limits_{i=0:k}\{\|\widetilde B_i\|_2\}/\max \limits_{i=0:k}\{\|B_i\|_2\}\approx \|A\|_{\infty}^{k}$, which implies that
		$Q_{\theta}^p \approx \mbox{cond}_{\infty}(A)^{k-1}$ by \eqref{quotient-norm-eig} while, in this case, the value of $Q_{\theta}^p$ associated with a polynomial of degree 1 is  of order 1 (by \eqref{eq.1approxseillcond}), which is not close to the upper bound $4\, \mbox{cond}_{\infty}(A)$ since $A$ is ill-conditioned. 
		Note also that, in contrast to the discussion for $Q^a_{\theta}$, we cannot state that such behaviours are opposite from each other. In our example, the lower bound for $Q_{\theta}^p$ with $k=1$ is much smaller than 1 when $\mbox{cond}_{\infty}(A)$ is very large while $Q_{\theta}^p$ may be of order 1. % since the factor $\max \limits_{i=0:k}\{\|\widetilde B_i\|_2\}/\max \limits_{i=0:k}\{\|B_i\|_2\}$, which is not present in the approximate expression of $Q^a_h$, contributes also to the behaviour of $Q^p_h$. 
		These different behaviours have been very clearly observed in the numerical experiments presented in Section \ref{sec:num} as it is explained in the next paragraph.
		
		\item  For a fixed ill-conditioned $A$, we have observed numerically that the eigenvalues $(\alpha_0, \beta_0)$ of randomly generated matrix polynomials $P(\alpha , \beta)$ of any degree satisfy, almost always, that 
		\begin{equation} \label{eq.attainb2}
		\frac{\|[\gamma_0, \delta_0]^T\|_{\infty}}{\|[ \alpha_0, \beta_0]^T\|_{\infty}} = \theta \| A^{-1} \|_{\infty}, %\approx \theta \|A \|_{\infty},
		\end{equation}
		with $\theta$ not too close to $0$. This is naturally expected because ``random'' vectors $[ \alpha_0, \beta_0]^T$, when expressed in the (orthonormal) basis of right singular vectors of $A^{-1}$, have non-negligible components on the vector corresponding to the largest singular value. We have also observed that  randomly generated polynomials $P(\alpha , \beta)$ of moderate degree satisfy, almost always,
		\begin{equation} \label{eq.attainb3}
		\frac{\max \limits_{i=0:k}\{\|\widetilde B_i\|_2\}}{\max \limits_{i=0:k}\{\|B_i\|_2\}} = \xi \|A \|_{\infty}^k,
		\end{equation}
		with $\xi$ not far from $1$. Combining \eqref{eq.attainb2}, \eqref{eq.attainb3}, \eqref{eq.1approxseillcond}, and \eqref{eq.3approxseillcond} we get that, for randomly generated polynomials, the following conditions almost always hold: $Q_{\theta}^p \approx \xi/\theta \approx 1$ for $k=1$; $Q_{\theta}^p \approx \xi \|A \|_{\infty}^2 /|\det(A)| \approx \mathrm{cond}_{\infty} (A)$ for $k=2$; and $Q_{\theta}^p \approx \xi  \theta^{k-2} \|A^{-1}\|_{\infty}^{k-2} \|A \|_{\infty}^{k}  /|\det(A)| \approx \mathrm{cond}_{\infty} (A)^{k-1}$ for $k>2$. This explains why in random numerical tests for $k=1$ the quotient $Q_{\theta}^p$ is almost 
		always close to $1$ and seems to be insensitive to the conditioning of $A$, as we will check numerically in Section \ref{sec:num}. However, remember, that both the upper and lower bounds in Theorem \ref{main-homo1} can be essentially attained for any fixed $A$.
	\end{enumerate}

	We finish this section by remarking that the differences mentioned above between the degrees $k=1$ and $k\geq 2$ are also observed numerically for the relative quotients $Q_{\theta}^r$ as shown in Section \ref{sec:num}, although the differences are somewhat less clear. It is also possible to argue that the lower and upper bounds in Theorem \ref{main-homo} can be essentially attained, but the arguments need to take into account the factors $\rho$ and $\widetilde\rho$ and are more complicated. We have performed numerical tests that confirm that those bounds are approximately attainable.
}

\section{Effect of M\"{o}bius transformations on backward errors of approximate eigenpairs} \label{sec.backwerrors} 

The scenario in this section is the following: we want to compute eigenpairs of a regular homogeneous matrix polynomial $P(\alpha,\beta)\in \mathbb{C}[\alpha , \beta]_k^{n\times n}$, but, for some reason, it is advantageous to compute eigenpairs of its M\"{o}bius transform $M_A (P) (\gamma , \delta)$, where $A =\left[ \begin{array}{cc} a & b \\c & d \end{array} \right]\in GL(2, \mathbb{C})$. A motivation for this might be, for instance, that $P(\alpha,\beta)$ has a certain structure that can be used for computing very efficiently and/or accurately its eigenpairs, but there are no specific algorithms available for such structure, although there are for the structured polynomial $M_A (P) (\gamma , \delta)$. Note that if $(\widehat{x}, (\widehat{\gamma_0}, \widehat{\delta_0}))$ and $(\widehat{y}^*,(\widehat{\gamma_0},\widehat{\delta_0}))$ are computed   \emph{approximate} right and left eigenpairs of $M_A(P)$, and $(\widehat{\alpha_0},\widehat{\beta_0}) : = (a\widehat{\gamma_0} + b \widehat{\delta_0},c\widehat{\gamma_0} + d \widehat{\delta_0})$ then, because of Proposition \ref{prop-mobius} %\ref{rel-eig} 
and Lemma \ref{eig-hom}, $(\widehat{x},(\widehat{\alpha_0},\widehat{\beta_0}))$ and $(\widehat{y^*},(\widehat{\alpha_0},\widehat{\beta_0}))$ can be considered  approximate right and left eigenpairs of $P(\alpha, \beta)$. Assuming that $(\widehat{x}, (\widehat{\gamma_0}, \widehat{\delta_0}))$ and $(\widehat{y}^*,(\widehat{\gamma_0},\widehat{\delta_0}))$ have been computed with small backward errors in the sense of Definition \ref{def-backw}, a natural question in this setting is whether $(\widehat{x},(\widehat{\alpha_0},\widehat{\beta_0}))$ and $(\widehat{y}^*,(\widehat{\alpha_0},\widehat{\beta_0}))$ are also approximate eigenpairs of $P$ with small backward errors. This would happen if the quotients
\begin{equation}
\label{quot-backw} Q_{\eta , \mathrm{right}} :=
\frac{\eta_{P} (\widehat{x}, (\widehat{\alpha_0}, \widehat{\beta_0}) )}{\eta_{M_A(P)}(\widehat{x}, (\widehat{\gamma_0}, \widehat{\delta_0}))},\quad  Q_{\eta , \mathrm{left}} :=
\frac{\eta_{P} (\widehat{y}^*,(\widehat{\alpha_0}, \widehat{\beta_0}) )}{\eta_{M_A(P)}(\widehat{y}^*, (\widehat{\gamma_0}, \widehat{\delta_0}))} \, 
\end{equation}
are moderate numbers not much larger than one. In this section we provide upper bounds on the quotients in \eqref{quot-backw} that allow us to determine simple sufficient conditions that guarantee that such quotients are not large numbers. For completeness, we also provide lower bounds for these quotients, although they are less interesting than the upper ones in the scenario described above.

Note that, from Theorem \ref{back-def}, we can easily deduce that the backward error is independent of the choice of  representative of the approximate eigenvalue.

The first result in this section is Theorem \ref{eta-exact}, which proves that the  quotients in \eqref{quot-backw} are equal and provides an explicit expression for them.

\begin{theorem}
	\label{eta-exact}
	Let $P(\alpha,\beta)=\sum_{i=0}^k \alpha^i \beta^{k-i}B_i \in \mathbb{C} [\alpha , \beta]_k^{n\times n}$ be a regular homogeneous matrix polynomial, let $A =\left[ \begin{array}{cc} a & b \\c & d \end{array} \right]\in GL(2, \mathbb{C})$, and let $M_A(P) (\gamma, \delta) = \sum_{i=0}^k \gamma^i \delta^{k-i} \widetilde{B}_i$ be the M\"obius transform of $P(\alpha , \beta)$ under $M_A$.  Let $(\widehat{x}, (\widehat{\gamma_0}, \widehat{\delta_0}))$ and $(\widehat{y}^*, (\widehat{\gamma_0}, \widehat{\delta_0}))$ be approximate right and left eigenpairs of $M_A(P)$, and let $[\widehat{\alpha_0},\widehat{\beta_0}]^T : = A [ \widehat{\gamma_0},  \widehat{\delta_0}]^T$. Let $Q_{\eta , \mathrm{right}}$ and $Q_{\eta , \mathrm{left}}$ be as in \eqref{quot-backw} and let $\omega_i$ and $\widetilde{\omega_i}$ be the weights used in the definition of the backward errors for $P$ and $M_A(P)$, respectively. Then, 
	\begin{equation} \label{eq.exactbackquot}
	Q_{\eta , \mathrm{right}} = Q_{\eta , \mathrm{left}} =\frac{\sum_{i=0}^k|\widehat{\gamma_0}|^i|\widehat{\delta_0}|^{k-i}\widetilde{\omega_i}}{\sum_{i=0}^k |\widehat{\alpha_0}|^i | \widehat{\beta_0}|^{k-i}\omega_i} .
	\end{equation}
	Moreover, \eqref{eq.exactbackquot} is independent of the choice of representative for $(\widehat{\gamma_0}, \widehat{\delta_0})$.
\end{theorem}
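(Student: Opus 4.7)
The plan is to derive the formula \eqref{eq.exactbackquot} by simply combining the explicit backward-error formula from Theorem \ref{back-def} with the key observation that evaluating $M_A(P)$ at $(\widehat{\gamma_0},\widehat{\delta_0})$ produces exactly the same matrix as evaluating $P$ at $(\widehat{\alpha_0},\widehat{\beta_0})$. More precisely, from the very definition \eqref{mobiusP} of the M\"obius transform,
\[
M_A(P)(\widehat{\gamma_0},\widehat{\delta_0}) \;=\; \sum_{i=0}^k (a\widehat{\gamma_0}+b\widehat{\delta_0})^{i}(c\widehat{\gamma_0}+d\widehat{\delta_0})^{k-i} B_i \;=\; \sum_{i=0}^k \widehat{\alpha_0}^{\,i}\widehat{\beta_0}^{\,k-i} B_i \;=\; P(\widehat{\alpha_0},\widehat{\beta_0}),
\]
because $[\widehat{\alpha_0},\widehat{\beta_0}]^T = A[\widehat{\gamma_0},\widehat{\delta_0}]^T$. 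Consequently $\|P(\widehat{\alpha_0},\widehat{\beta_0})\widehat{x}\|_2 = \|M_A(P)(\widehat{\gamma_0},\widehat{\delta_0})\widehat{x}\|_2$ and, analogously, $\|\widehat{y}^{\,*}P(\widehat{\alpha_0},\widehat{\beta_0})\|_2 = \|\widehat{y}^{\,*}M_A(P)(\widehat{\gamma_0},\widehat{\delta_0})\|_2$.

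Next I would write down the four backward errors using Theorem \ref{back-def}: the numerators (resp.\ denominators) of $Q_{\eta,\mathrm{right}}$ and $Q_{\eta,\mathrm{left}}$ share the factor $\|P(\widehat{\alpha_0},\widehat{\beta_0})\widehat{x}\|_2/\|\widehat{x}\|_2$ (resp.\ $\|\widehat{y}^{\,*}M_A(P)(\widehat{\gamma_0},\widehat{\delta_0})\|_2/\|\widehat{y}\|_2$) with their counterpart for $M_A(P)$. By the equality of matrices above, the residual norms and the eigenvector norms cancel between numerator and denominator of each quotient, and the only surviving terms are the weighted sums in the denominators of the backward-error formulas. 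This immediately yields
\[
Q_{\eta,\mathrm{right}} \;=\; Q_{\eta,\mathrm{left}} \;=\; \frac{\sum_{i=0}^k |\widehat{\gamma_0}|^{i}|\widehat{\delta_0}|^{k-i}\widetilde{\omega}_i}{\sum_{i=0}^k |\widehat{\alpha_0}|^{i}|\widehat{\beta_0}|^{k-i}\omega_i},
\]
proving both the common value and the explicit formula \eqref{eq.exactbackquot}.

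Finally, I would verify independence of the representative. If $[\widehat{\gamma_0},\widehat{\delta_0}]^T$ is replaced by $t[\widehat{\gamma_0},\widehat{\delta_0}]^T$ with $t\in\mathbb{C}\setminus\{0\}$, then by linearity $[\widehat{\alpha_0},\widehat{\beta_0}]^T = A[\widehat{\gamma_0},\widehat{\delta_0}]^T$ is simultaneously replaced by $t[\widehat{\alpha_0},\widehat{\beta_0}]^T$. Both the numerator and the denominator of \eqref{eq.exactbackquot} are then multiplied by $|t|^{k}$, so the quotient is unchanged.

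There is essentially no obstacle here: the entire content of the result is the identity $M_A(P)(\widehat{\gamma_0},\widehat{\delta_0}) = P(\widehat{\alpha_0},\widehat{\beta_0})$, and once this is observed the proof reduces to bookkeeping with the formulas in Theorem \ref{back-def}. The only mild care needed is to keep the right and left arguments straight (so that the common cancellation of $\|\widehat{x}\|_2$, or $\|\widehat{y}\|_2$, and of the residual norm is visible), but this is a matter of notation rather than a real difficulty.
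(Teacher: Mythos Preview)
Your proposal is correct and follows essentially the same approach as the paper: the key observation $M_A(P)(\widehat{\gamma_0},\widehat{\delta_0}) = P(\widehat{\alpha_0},\widehat{\beta_0})$ followed by direct application of Theorem~\ref{back-def} is exactly what the paper does, only stated more tersely there. Your explicit verification of representative-independence via the $|t|^k$ scaling is a small addition that the paper omits (relying instead on the general remark that backward errors are representative-independent), but this is a cosmetic difference rather than a substantive one.
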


\begin{proof}
	Since the backward error does not depend on the choice of representative of approximate eigenvalues, we choose an arbitrary representative $[\widehat{\gamma_0},\widehat{\delta_0}]^T$ of $(\widehat{\gamma_0},\widehat{\delta_0})$, and, once $[\widehat{\gamma_0},\widehat{\delta_0}]^T$ is fixed, we choose  $[\widehat{\alpha_0},\widehat{\beta_0} ]^T: = A[\widehat{\gamma_0}, \widehat{\delta_0}]^T$ as representative of the approximate eigenvalue of $P$. For these representatives note that
	$M_A(P)(\widehat{\gamma_0}, \widehat{\delta_0}) = \sum_{i=0}^k (a \widehat{\gamma_0}+ b \widehat{\delta_0})^i (c \widehat{\gamma_0}+d \widehat{\delta_0})^{k-i} B_i = P(\widehat{\alpha_0}, \widehat{\beta_0})$. Thus, Theorem \ref{back-def} implies \eqref{eq.exactbackquot}.
\end{proof}

Analogously to the quotients of condition numbers in Definition \ref{notation-hom3}, we can consider absolute, relative with respect to the norm of the polynomial, and relative quotients of backward errors. They are defined, taking into account Definition \ref{def.weigths-back}, as 
\begin{equation}
\label{quot-backw-weights}
Q_{\eta, \mathrm{right}}^{s}:=\frac{\eta_{P}^{s} (\widehat{x}, \langle A[\widehat{\gamma_0}, \widehat{\delta_0}]^T\rangle )}{\eta_{M_A(P)}^{s}(\widehat{x}, (\widehat{\gamma_0}, \widehat{\delta_0}))}, \quad 
Q_{\eta , \mathrm{left}}^{s}:=\frac{\eta_{P}^{s} (\widehat{y}^*,  \langle A[\widehat{\gamma_0}, \widehat{\delta_0}]^T\rangle )}{\eta_{M_A(P)}^{s}(\widehat{y}^*, (\widehat{\gamma_0}, \widehat{\delta_0}))},\; \mbox{for $s = a, p, r$}.
%Q_{\eta, \mathrm{right}}^{r}:=\frac{\eta_{P}^{r} (\widehat{x}, (\widehat{\alpha_0}, %\widehat{\beta_0}) )}{\eta_{M_A(P)}^{r}(\widehat{x}, (\widehat{\gamma_0}, %\widehat{\delta_0}))}
\end{equation}

Theorem \ref{eta-bounds} provides upper and lower bounds on the quotients in \eqref{quot-backw-weights}.

\begin{theorem}
	\label{eta-bounds}
	With the same notation and hypotheses of Theorem \ref{eta-exact}, let  $Y_k :=  (k+1)^2 {k \choose \lfloor k/2 \rfloor}$. Then
	\begin{enumerate}
		\item $\displaystyle \frac{1}{(k+1) \, \|A\|_{\infty}^k}  \leq Q_{\eta,\mathrm{right}}^{a} = Q_{\eta,\mathrm{left}}^{a} \leq (k+1) \, \|A^{-1}\|_{\infty}^k$.
		\medskip
		\item $\displaystyle \frac{1}{Y_k \, \mathrm{cond}_{\infty}(A)^k} \leq Q_{\eta,\mathrm{right}}^{p} = Q_{\eta,\mathrm{left}}^{p}  \leq Y_k \, \mathrm{cond}_{\infty}(A)^k$.
		\medskip
		\item If $B_0 \ne 0, B_k \ne 0, \widetilde{B}_0 \ne 0$, and $\widetilde{B}_k \ne 0$, and $\rho$ and $\widetilde{\rho}$ are defined as in \eqref{rho}, then  $$\displaystyle \frac{1}{Y_k \, \mathrm{cond}_{\infty}(A)^k \, \widetilde{\rho}} \leq Q_{\eta,\mathrm{right}}^{r} = Q_{\eta,\mathrm{left}}^{r} \leq Y_k \, \mathrm{cond}_{\infty}(A)^k \, \rho .$$
	\end{enumerate}
\end{theorem}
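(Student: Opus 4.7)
The plan is to derive all bounds from the exact, derivative-free expression \eqref{eq.exactbackquot} of Theorem \ref{eta-exact}, which already takes care of the equality $Q_{\eta,\mathrm{right}}^s = Q_{\eta,\mathrm{left}}^s$ for $s\in\{a,p,r\}$. So throughout I only need to bound the single scalar ratio
\[
\frac{\sum_{i=0}^k|\widehat{\gamma_0}|^i|\widehat{\delta_0}|^{k-i}\widetilde{\omega}_i}{\sum_{i=0}^k|\widehat{\alpha_0}|^i|\widehat{\beta_0}|^{k-i}\omega_i},
\]
with the appropriate choice of weights from Definition \ref{def.weigths-back}. Since $[\widehat{\alpha_0},\widehat{\beta_0}]^T=A[\widehat{\gamma_0},\widehat{\delta_0}]^T$ and $[\widehat{\gamma_0},\widehat{\delta_0}]^T=A^{-1}[\widehat{\alpha_0},\widehat{\beta_0}]^T$, the operator-norm inequality in the $\infty$-norm yields
\[
\max\{|\widehat{\alpha_0}|,|\widehat{\beta_0}|\}\leq \|A\|_\infty\max\{|\widehat{\gamma_0}|,|\widehat{\delta_0}|\},\qquad
\max\{|\widehat{\gamma_0}|,|\widehat{\delta_0}|\}\leq \|A^{-1}\|_\infty\max\{|\widehat{\alpha_0}|,|\widehat{\beta_0}|\},
\]
and these two inequalities are the basic bridge between the two representatives.

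For part (1), I would take $\omega_i=\widetilde{\omega}_i=1$ and bound the numerator from above by $(k+1)\max\{|\widehat{\gamma_0}|,|\widehat{\delta_0}|\}^k$ and the denominator from below by $\max\{|\widehat{\alpha_0}|,|\widehat{\beta_0}|\}^k$ (retaining only the largest monomial). Combining with the bridge inequality above produces the upper bound $(k+1)\|A^{-1}\|_\infty^k$. The lower bound is obtained by the symmetric argument: apply Proposition \ref{prop-mobius}(2.) to write $P = M_{A^{-1}}(M_A(P))$, so the reciprocal $1/Q_{\eta,\mathrm{right}}^a$ is of the same form but with $A$ replaced by $A^{-1}$, and apply the upper-bound calculation.

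For part (2), I would use $\omega_i=\|P\|_\infty$ and $\widetilde{\omega}_i=\|M_A(P)\|_\infty$, which gives
\[
Q_{\eta,\mathrm{right}}^p = Q_{\eta,\mathrm{right}}^a\cdot\frac{\|M_A(P)\|_\infty}{\|P\|_\infty}.
\]
The new ingredient is a bound on the polynomial-norm quotient, which comes directly from Proposition \ref{cotaMobius}: from \eqref{normBi} and $\sum_{i=0}^k\|B_i\|_2\leq(k+1)\|P\|_\infty$, one gets $\|M_A(P)\|_\infty\leq (k+1)\binom{k}{\lfloor k/2\rfloor}\|A\|_\infty^k\|P\|_\infty$. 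Multiplying this by the upper bound of part (1) yields $Y_k\,\mathrm{cond}_\infty(A)^k$, and again the lower bound follows from the $P = M_{A^{-1}}(M_A(P))$ trick.

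For part (3), with $\omega_i=\|B_i\|_2$ and $\widetilde{\omega}_i=\|\widetilde{B}_i\|_2$, the upper-bound numerator is bounded by $\max_i\|\widetilde{B}_i\|_2\cdot(k+1)\max\{|\widehat{\gamma_0}|,|\widehat{\delta_0}|\}^k$, while the denominator is bounded \emph{from below} by keeping only the two corner terms:
\[
\sum_{i=0}^k|\widehat{\alpha_0}|^i|\widehat{\beta_0}|^{k-i}\|B_i\|_2\;\geq\;|\widehat{\alpha_0}|^k\|B_k\|_2+|\widehat{\beta_0}|^k\|B_0\|_2\;\geq\;\min\{\|B_0\|_2,\|B_k\|_2\}\max\{|\widehat{\alpha_0}|,|\widehat{\beta_0}|\}^k.
\]
This is precisely where the hypothesis $B_0, B_k\neq 0$ is crucial, and it also explains the appearance of $\rho$; the factor $\max_i\|\widetilde{B}_i\|_2/\max_i\|B_i\|_2$ that appears after inserting and factoring $\max_i\|B_i\|_2$ is again handled by Proposition \ref{cotaMobius}. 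Multiplying everything out gives $Y_k\,\mathrm{cond}_\infty(A)^k\,\rho$. The lower bound arises from the $M_{A^{-1}}$ inversion argument, which swaps the roles of $(B_i,\widetilde{B}_i)$ and of $(A,A^{-1})$, thereby converting $\rho$ into $\widetilde{\rho}$. The main (though mild) obstacle is just the bookkeeping in part (3) to ensure that the $\min\{\|B_0\|_2,\|B_k\|_2\}$ term correctly produces $\rho$ after multiplying and dividing by $\max_i\|B_i\|_2$; everything else reduces to the same elementary manipulations used in parts (1) and (2).
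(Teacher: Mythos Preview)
Your proposal is correct and follows essentially the same approach as the paper: both start from the exact expression \eqref{eq.exactbackquot}, bound the numerator by $(k+1)\max\{|\widehat{\gamma_0}|,|\widehat{\delta_0}|\}^k$ (times the appropriate weight maximum) and the denominator from below by the dominant monomial (or the two corner terms in part (3)), apply the $\infty$-norm inequality $\|[\widehat{\gamma_0},\widehat{\delta_0}]^T\|_\infty \leq \|A^{-1}\|_\infty \|[\widehat{\alpha_0},\widehat{\beta_0}]^T\|_\infty$, invoke Proposition \ref{cotaMobius} for the norm ratio $\max_i\|\widetilde{B}_i\|_2/\max_i\|B_i\|_2$, and then obtain the lower bounds by the $P=M_{A^{-1}}(M_A(P))$ swap. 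The only cosmetic difference is that the paper writes the bridge inequality as $\|A^{-1}A[\widehat{\gamma_0},\widehat{\delta_0}]^T\|_\infty/\|A[\widehat{\gamma_0},\widehat{\delta_0}]^T\|_\infty\leq\|A^{-1}\|_\infty$, which is your inequality in disguise.
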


\begin{proof} We only prove the upper bounds since the lower bounds can be obtained in a similar way. Moreover, we only need to pay attention to the quotients for right eigenpairs, taking into account \eqref{eq.exactbackquot}. Let us start with the absolute quotients. From \eqref{eq.exactbackquot} with $\omega_i = \widetilde{\omega}_i = 1$, we obtain
	\begin{align} \nonumber
	Q_{\eta , \mathrm{right}}^a  & \leq  (k+1) \frac{\|[\widehat{\gamma_0},\widehat{\delta_0}]^T\|_\infty^k}{\|A  [\widehat{\gamma_0},\widehat{\delta_0}]^T \|_\infty^k} \\ &=
	(k+1) \frac{\| A^{-1} A [\widehat{\gamma_0},\widehat{\delta_0}]^T\|_\infty^k}{\|A  [\widehat{\gamma_0},\widehat{\delta_0}]^T\|_\infty^k} 
	\leq (k+1) \|A^{-1} \|_\infty^k. \label{eq.auxxback}%\leq 2^k \, (k+1) \,  \|A^{-1} \|_M^k \, . 
	\end{align}
	
	The upper bound on $Q_{\eta,\mathrm{right}}^{p}$ follows from combining
	$Q_{\eta,\mathrm{right}}^{p}=Q_{\eta,\mathrm{right}}^{a} \frac{\max \limits_{i=0:k}\{\|\widetilde B_i\|_2\} } {\max \limits_{i=0:k}\{\|B_i\|_2\} }$, which is obtained from \eqref{eq.exactbackquot}, the upper bound on $Q_{\eta,\mathrm{right}}^{a}$ obtained above, and \eqref{normBi}.
	
	The upper bound on $Q_{\eta,\mathrm{right}}^{r}$ can be obtained noting that \eqref{eq.exactbackquot} and \eqref{normBi} imply
	\begin{align*}
	Q_{\eta, \mathrm{right}}^{r}
	&\leq  \frac{\max \limits_{i=0:k}\{\|\widetilde{B}_i\|_2\}}{\min  \{\|B_0 \|_2, \|B_k\|_2\}} \frac{\sum_{i=0}^k |\widehat{\gamma_0}|^i|\widehat{\delta_0}|^{k-i}}{ | a\widehat{\gamma_0} + b \widehat{\delta_0}|^k+ |c\widehat{\gamma_0} + d \widehat{\delta_0}|^{k}}\\
	&\leq  (k+1)^2 {k \choose \lfloor k/2\rfloor} \|A\|_{\infty}^k \,  \frac{\|[\widehat{\gamma_0},\widehat{\delta_0}]^T\|_\infty^k}{\|[ a\widehat{\gamma_0} + b \widehat{\delta_0}, c\widehat{\gamma_0} + d \widehat{\delta_0}]^T \|_\infty^k} \; \rho\\
	& \leq (k+1)^2 {k \choose \lfloor k/2\rfloor} \|A\|_{\infty}^k  \|A^{-1}\|_{\infty}^k \, \rho,
	\end{align*}
	where the last inequality is obtained as in \eqref{eq.auxxback}.
\end{proof}

\begin{remark} 
	\label{remark-back}  The bounds in Theorem \ref{eta-bounds} on the quotients of backward errors have the same flavor as those in Theorems \ref{main-homo0}, \ref{main-homo1}, and \ref{main-homo} on the quotients of condition numbers. However, note that in Theorem \ref{eta-bounds} there is no need to make a distinction between the bounds for $k=1$ and $k\geq 2$, in contrast with Theorems \ref{main-homo0}, \ref{main-homo1}, and \ref{main-homo}, since the bounds for the quotients of backward errors are obtained in the same way for all $k$. This has numerical consequences since the differences discussed in Section \ref{subsec.53}, and shown in practice in some of the tests in Section \ref{sec:num}, between the quotients of condition numbers for $k=1$ and $k\geq 2$ when $\mathrm{cond}_{\infty} (A) \gg 1$ do not exist for the quotients of backward errors.  
	
	Ignoring the factors depending only on the degree $k$, Theorem \ref{eta-bounds} guarantees that the quotients of backward errors are moderate numbers under the same sufficient conditions under which Theorems \ref{main-homo0}, \ref{main-homo1}, and \ref{main-homo} guarantee that the quotients of condition numbers are moderate numbers. That is: $\|A\|_{\infty} \approx \|A^{-1}\|_{\infty} \approx 1$ implies  that $Q_{\eta, \mathrm{right}}^{a} = Q_{\eta, \mathrm{left}}^{a}$ is a moderate number,  $\mathrm{cond}_{\infty} (A) \approx 1$ implies that $Q_{\eta, \mathrm{right}}^{p} = Q_{\eta, \mathrm{left}}^{p}$ is a moderate number, and $\mathrm{cond}_{\infty} (A) \approx 1$ and $\rho \approx \widetilde{\rho} \approx 1$ imply that $Q_{\eta, \mathrm{right}}^{r} = Q_{\eta, \mathrm{left}}^{r}$ is a moderate number.
\end{remark}

%%%%%%%%%%%%%%%%%%%%%%%%%%

\section{Numerical experiments}\label{sec:num} 
In this section, we present a few numerical experiments that compare the exact values of the quotients $Q_\theta^p$, $Q_\theta^r$, $Q_{\eta,\mathrm{right}}^{p}$, and $Q_{\eta,\mathrm{right}}^{r}$ with the bounds on these quotients  obtained in Sections \ref{SecHom} and \ref{sec.backwerrors}. Observe that, implicitly, these experiments also compare the exact values of $Q_{\eta,\mathrm{left}}^{p}$ and $Q_{\eta,\mathrm{left}}^{r}$ with the bounds on these quotients as a consequence of Theorem \ref{eta-exact}. We do not present experiments on $Q_\theta^a$ and $Q_{\eta,\mathrm{right}}^{a}$ for brevity and also because the weights corresponding to these quotients are not interesting in applications, as it was explained after Definition \ref{weights}. We remark that many other numerical tests have been performed, in addition to the ones presented in this section, and that all of them confirm the theory developed in this paper.

The results in Sections \ref{SecHom} and \ref{sec.backwerrors} prove that eigenvalue condition numbers and backward errors of approximate eigenpairs can change significantly under M\"{o}bius transformations induced by ill-conditioned matrices. Therefore, the use of such M\"{o}bius transformations is not recommended in numerical practice. As a consequence most of our numerical experiments consider M\"{o}bius transformations induced by matrices $A$ such that $\mathrm{cond}_2 (A) =1$, which implies $1 \leq \mathrm{cond}_\infty (A) \leq 2$. The only exception is Experiment 3. 

Next we explain the goals of each of the numerical experiments in this section. Experiment 1 illustrates that the factor $Z_k$ appearing in the bounds on $Q_\theta^p$ and $Q_\theta^r$ in Theorems \ref{main-homo1} and \ref{main-homo} is very pessimistic in practice. This is a very important fact since $Z_k$ is very large for moderate values of $k$ and, if its effect was observed in practice, then even M\"{o}bius transformations induced by well-conditioned matrices would not be recommendable for  matrix polynomials with moderate degree. Experiment 2 illustrates that $Q_{\theta}^{r}$ indeed depends on the factor $\rho$ defined in \eqref{rho} and, so, that the bounds in Theorem \ref{main-homo} reflect often the behaviour of $Q_{\theta}^{r}$ when $\rho$ is large. Experiment 3 is mainly of academic interest, since it considers M\"{o}bius transformations induced by ill-conditioned matrices. The goal of this experiment is to illustrate the results presented in Subsection \ref{subsec.53}, in particular, the different typical behaviors of the quotients $Q_{\theta}^p$ for $k=1$ and $k\geq 2$ when the polynomials are randomly generated. Experiments 4 and 5 are the counterparts of Experiments 1 and 2, respectively, for the quotients of backward errors. 

All the experiments have been run on MATLAB-R2018a. Since in these experiments we have  sometimes encountered badly scaled matrix polynomials (that is, polynomials with matrix coefficients whose norms vary widely), ill-conditioned eigenvalues have appeared. These eigenvalues could potentially be computed inaccurately and spoil the comparison between the results in the experiments and the theory. To avoid this problem,  all the computations in Experiments 1, 2, and 3 have been done using variable precision arithmetic with 40 decimal digits of precision. To obtain the eigenvalues of each matrix polynomial $P$ in these experiments, the function {\texttt eig} in MATLAB has been applied to the first Frobenius companion form of $P$. In Experiments 4 and 5, we have also used variable precision arithmetic with 40 decimal digits of precision for computing the M\"{o}bius transforms of the generated polynomials, but, since we are dealing with backward errors, the eigenvalues have been computed in the standard double precision of MATLAB with the command {\texttt polyeig}. 

\textbf{Experiment 1.} In this experiment, we generate random matrix polynomials $P(\alpha,\beta)=\sum_{i=0}^k\alpha^i\beta^{k-i}B_i$ by using the MATLAB's command {\texttt randn} to generate the matrix coefficents $B_i$. Then, for each polynomial $P(\alpha,\beta)$, a random $2\times 2$ matrix $A$ is constructed as the unitary $Q$ matrix produced by the command {\texttt qr(randn(2))}, which guarantees that $\mathrm{cond}_{2}(A)=\|A\|_2\|A^{-1}\|_2=1$. Finally, the M\"{o}bius transform $M_A (P)$ is computed. We have worked with degrees $k=1:15$ and, for each degree $k$, we have generated $n_k$ matrix polynomials of size $5\times 5$, where the values of $n_k$ can be found in the following table:
\begin{equation}
\label{table}
\begin{array}{|c|c|c|c|c|c|c|c|c|c|c|c|c|c|c|}
\hline 
n_1& n_2 & n_3 & n_4 & n_5 & n_6 & n_7 & n_8 & n_9 & n_{10} & n_{11} & n_{12} & n_{13} & n_{14} & n_{15} \\ 
\hline 
75 & 37 & 25 & 18 & 15 & 12 & 10 & 9 & 8 & 7 & 7 & 6 & 5 & 5 & 5 \\ 
\hline 
\end{array}
\end{equation}

For each pair $(P, A)$ and each (simple) eigenvalue $(\alpha_0, \beta_0)$ of $P(\alpha,\beta)$, we compute two quantities: the exact value of $Q_\theta^{p}$ (through the formula \eqref{Qrah} with the weights in Definition \ref{weights}(2)) and the upper bound on this quotient given in Theorem \ref{main-homo1}, which depends only on $\mathrm{cond}_{\infty}(A)$ and $Z_k$. These quantities are shown in the left plot of Figure \ref{fig1} as a function of $k$: the exact values of $Q_\theta^{p}$ are represented with the marker $*$  while the upper bounds use the marker $\circ$. Note that in this plot the scale of the vertical axis is logarithmic. This experiment confirms the (anticipated in Remark \ref{rem-hom-1}) fact that the factor $Z_k$ is very pessimistic, since we observe in the plot that, although the quotients $Q_\theta^p$ typically increase slowly with the degree $k$, they are much smaller than the corresponding upper bounds. A closer look at the exact values of $Q_\theta^{p}$ shows that most of them are larger than one, some considerably larger, and that the very few which are smaller than one are very close to one. We have observed this typical behavior of $Q_\theta^{p}$ (and also of $Q_\theta^{r}$) in all our {\em random} numerical experiments, but we stress that it is easy to produce tests with the  opposite behavior by interchanging the roles of $P$ and $M_A(P)$ and of $A$ and $A^{-1}$, respectively. Note that, in this case, the set of random matrix polynomials $M_A(P)$ is very different that the one produced by generating the matrix coefficients with the command {\texttt randn}.

We have performed an experiment similar to the one described in the previous paragraphs for confirming that $Z_k$ is also pessimistic in the bounds in Theorem \ref{main-homo} on $Q_\theta^r$. In this case, we have scaled the coefficients of the randomly generated matrix polynomials in such a way that the factor $\rho$ in \eqref{rho} is always equal to $10^3$. The plot for the obtained exact values of $Q_\theta^{r}$ and their upper bounds is essentially the one on the left of Figure \ref{fig1} with the vertical coordinates of all the   markers multiplied by $10^3$. For brevity, this plot is omitted.

\begin{figure}[ht]
	\centering
	\includegraphics[width=0.8\textwidth]{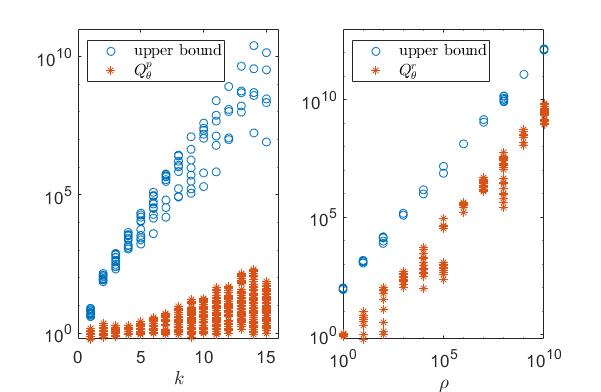}
	\caption{On the left results of Experiment 1, i.e., plot of $Q_\theta^p$ versus the degree $k$ for M\"{o}bius transformations induced by matrices $A$ with $\mathrm{cond}_{2}(A)=1$. On the right results of Experiment 2, i.e., plot of $Q_\theta^r$ versus $\rho$ for M\"{o}bius transformations  of matrix polynomials with degree $2$ induced by matrices $A$ with $\mathrm{cond}_{2}(A)=1$.}
	\label{fig1}
\end{figure}

\textbf{Experiment 2.} In this experiment, we have generated $30$ random matrix polynomials of size $5\times 5$ and degree $2$ for which the factor $\rho$ defined in \eqref{rho} equals  $10^t$, where $t$ has been randomly chosen for each polynomial by using the MATLAB's command {\texttt randi([0 10])}. More precisely, the matrix coefficients $B_0, B_1, B_2$ of these matrix polynomials  with $\rho =10^t$ have been generated with the next procedure. First, we generated matrix polynomials of size $5\times 5$ and degree $2$ by generating the matrix coefficients $B'_0, B'_1$, and $B'_2$ with MATLAB's command {\texttt randn}. For each of these polynomials, we determined $\rho_T:=\max \limits_{i=0:2}\{\|B'_i\|_2\}/\min\{\|B'_0\|_2,\|B'_2\|_2\}$ and the coefficient $B'_s$ such that $\|B'_s\|_2=\max \limits_{i=0:2}\{\|B'_i\|_2\}$. Then, the matrix coefficients $B_0', B_1'$ and $B_2'$ were scaled (obtaining new coefficients $B_0, B_1, B_2$) to get a new polynomial with the desired $\rho$, using the following criteria: If $\min\{\|B'_0\|_2,\|B'_2\|_2\}=\|B'_0\|_2$ and
\begin{itemize}
	\item[(a)]  $\|B'_0\|_2=\|B'_1\|_2=\|B'_2\|_2$, then  {\texttt q := randi([0 2])}, $B_q := \rho B'_q$ and $B_i := B'_i$ for $i\ne q.$
	\item[(b)]  $\|B'_0\|_2=\|B'_1\|_2=\|B'_2\|_2$ does not hold and $s=1$, then:
	\begin{itemize}
		\item[(b1)] If $\rho_T\leq\rho$, then $B_0: = \rho_T B'_0$, $B_1 := \rho B'_1$, and $B_2 := \rho_T B'_2$.
		\item[(b2)] If $\rho_T>\rho$, then $B_0 :=\rho_T B'_0$, $B_1 :=\rho B'_1$, and $B_2:= \rho (\|B'_1\|_2/\|B'_2\|_2) \,B'_2$.
	\end{itemize}
	\item[(c)] $\|B'_0\|_2=\|B'_1\|_2=\|B'_2\|_2$ does not hold and $s\ne 1$ (which means $s=2$), then:
	\begin{itemize}
		\item[(c1)] If $\rho_T\leq\rho$, then $B_0 := B'_0$, $B_1 := B'_1$, and $B_2 := (\rho/\rho_T) \, B'_2$.
		\item[(c2)] If $\rho_T>\rho$, then $B_0 := \rho_T B'_0$, $B_1 := \rho(\|B'_2\|_2/\|B'_1\|_2) \, B'_1$, and $B_2 := \rho B'_2$.
	\end{itemize}
\end{itemize}
If $\min\{\|B'_0\|_2,\|B'_2\|_2\}=\|B'_2\|_2$, then one proceeds in the same way but interchanging the roles of $B'_0$ and $B'_2$.

For each matrix polynomial $P$ generated as above, a random $2\times 2$ matrix $A$ with $\mathrm{cond}_{2}(A)=1$ was constructed as in Experiment 1 and, then,   $M_A (P)$ was computed. Finally, for each pair $(P,A)$ and each (simple) eigenvalue $(\alpha_0,\beta_0)$ of $P$, we computed two quantities: the exact value of $Q_\theta^r$, from the formula \eqref{Qrah} with the weights in Definition \ref{weights}(3), and the upper bound for this quotient in Theorem \ref{main-homo}, which depends only on $\mathrm{cond}_{\infty}(A)$, $\rho$, and $Z_2$. These quantities are shown in the right plot of Figure \ref{fig1} as a function of $\rho$: the markers of the exact values of $Q_\theta^{r}$ are $*$  and the markers of the upper bounds are $\circ$. Note that, in this plot, the scale of both the horizontal and vertical axes are logarithmic. It can be observed that many of the exact values of $Q_\theta^r$ essentially attain the upper bounds (recall that here $Z_2 = 72$), and, so, that $Q_\theta^r$ typically increases proportionally to $\rho$ for the random matrix polynomials that we have generated. We report that, if in this set of random polynomials the roles of $P$ and $M_A(P)$ and the roles of $A$ and $A^{-1}$ are interchanged, and the results are graphed against the factor $\widetilde{\rho}$ in \eqref{rho}, then the exact values of $Q_\theta^r$ essentially attain the {\em lower} bounds in Theorem \ref{main-homo}. This plot is omitted for brevity.

\textbf{Experiment 3.} In this experiment, we generated random matrix polynomials $P$ by generating their coefficients with MATLAB's command {\texttt randn}. In particular, we generated $30$ matrix polynomials of degree $k=1$ and sizes $5\times 5$, $10\times 10$, and $15\times 15$;  $20$ matrix polynomials of degree $k=2$ and sizes $5\times 5$ and $10\times 10$; and $20$ matrix polynomials of degree $k=3$ and sizes $5\times 5$ and $8\times 8$ (more precisely, $10$ matrix polynomials of each pair degree-size). For each polynomial $P$, a random $2\times 2$ matrix $A:=U\mbox{diag}(r , r/10^s)W$ was constructed, where
$U$ and $W$ are random orthogonal matrices generated as the unitary  Q matrices produced by the application of the MATLAB command {\texttt qr(randn(2))} twice; $r=${\texttt randn}, and $s$={\texttt randi([0 10])}, which implies $\mathrm{cond}_{2}(A)=10^s$.  Then the M\"{o}bius transform $M_A (P)$ of each polynomial $P$ was computed.

For each pair $(P,A)$ and each (simple) eigenvalue $(\alpha_0,\beta_0)$ of $P$, we computed two quantities: the exact value of $Q_\theta^{p}$ (from the formula \eqref{Qrah} with the weights in Definition \ref{weights}(2)) and $\mathrm{cond}_{\infty}(A)$. The quotients $Q_\theta^{p}$ are graphed (using the marker $*$) in the plots in Figure \ref{fig2} as a function of $\mathrm{cond}_{\infty}(A)$: the figure on the left  corresponds to the polynomials of degree $1$, the figure in the middle  corresponds to the polynomials of degree $2$, and the figure on the right  corresponds to the polynomials of degree $3$. Observe that in these plots the scales of both axes are logarithmic and that solid lines corresponding to the upper bounds in Theorem \ref{main-homo1} are also drawn. As announced and explained in Section \ref{subsec.53}, (recall, in particular, the fourth and fifth points) the differences between the behaviours of $Q_\theta^{p}$ for degrees $k = 1$ and $k \geq 2$ and the considered random polynomials are striking: typically, when $k=2$ or $k=3$, the exact values of $Q_\theta^p$ grow proportionally to $\mathrm{cond}_{\infty}(A)^{k-1}$  and are close to the upper bounds in Theorem \ref{main-homo1}, but, for $k=1$, $Q_\theta^p$  remains close to $1$ even when the matrix $A$ is extremely ill-conditioned. However, the reader should bear in mind that for any given matrix $A$, it is always possible (and easy) to construct regular matrix polynomials of degree $1$ (pencils) with eigenvalues for which the upper bound on $Q_\theta^p$ in Theorem \ref{main-homo1} is essentially attained, as it was explained in the third point in Section \ref{subsec.53}. We have generated pencils of this type but the results are not shown for brevity. Again, we report that, for degrees $2$ and $3$, if in these sets of random polynomials the roles of $P$ and $M_A(P)$ and the roles of $A$ and $A^{-1}$ are interchanged, then the exact values of $Q_\theta^p$ essentially attain the {\em lower} bounds in Theorem \ref{main-homo1}. These plots are also omitted for brevity.

\begin{figure}[ht]
	\centering
	\includegraphics[width=0.9\textwidth]{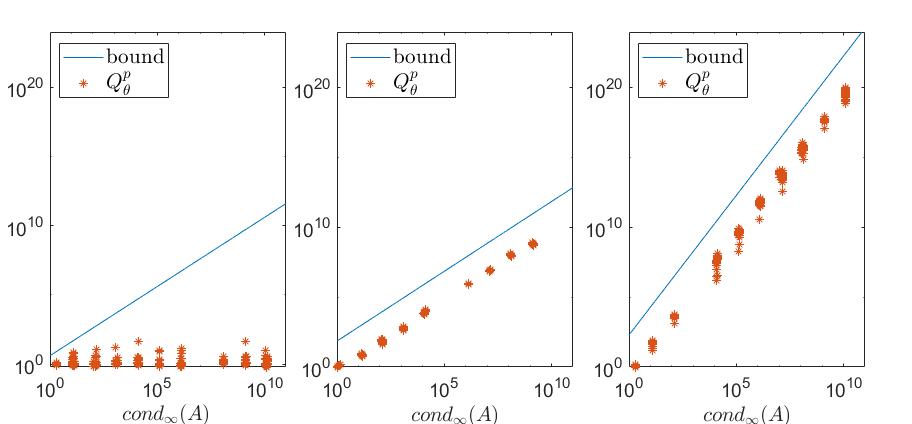}
	\caption{Results of Experiment 3: plots of $Q_\theta^p$ versus $\mathrm{cond}_{\infty}(A)$ for degrees $k=1$ (on the left), $k=2$ (on the middle), and $k=3$ (on the right).}
	\label{fig2}
\end{figure}

We performed an experiment analogous to Experiment 3 but where all matrix polynomials were generated so that the value of $\rho$ (as in \eqref{rho}) equaled  $10^3$.  The exact values of the quotients $Q_\theta^r$ and the upper bounds in Theorem \ref{main-homo} were then computed. The obtained plots are essentially the ones in Figure \ref{fig2} with the vertical coordinates of the quotients and the upper bounds multiplied by $10^3$.

\textbf{Experiment 4.} This experiment is the counterpart for backward errors of Experiment 1 and, as a consequence, is described very briefly. We generated a set of random matrix polynomials $P$ and  their M\"{o}bius transforms $M_A (P)$ exactly as in Experiment 1. Therefore, $\textrm{cond}_2(A)=1$ for all the matrices $A$ in this test. Then, for each pair $(P,A)$, we computed the (approximate) right eigenpairs  %$(\widehat{x},(\widehat{\gamma_0},\widehat{\delta_0}))$ 
of $M_A(P)(\gamma,\delta)$ in floating point arithmetic with the command {\tt polyeig}. For each of these computed eigenpairs, we computed two quantities:  $Q_{\eta,\mathrm{right}}^{p}$ (from the expression \eqref{eq.exactbackquot} with the weights in Definition \ref{def.weigths-back}(2)) and the upper bound on this quotient obtained in Theorem \ref{eta-bounds}, which depends only on  $\textrm{cond}_\infty(A)$ and $Y_k$. These quantities are shown in the left plot of Figure \ref{fig4} as functions of the degree $k$ of $P$. We observe the same behaviour as in the left plot of Figure \ref{fig1} and similar comments are valid. Therefore, it can be deduced that the factor $Y_k$ in the bounds on the quotients of the backward errors is very pessimistic. 

\textbf{Experiment 5.} This experiment is the counterpart of Experiment 2 for backward errors.  We generated a set of random matrix polynomials $P$ of degree $2$ and their M\"{o}bius transforms $M_A (P)$ exactly as in Experiment 2. For each pair $(P,A)$ and each right eigenpair  %$(\widehat{x},(\widehat{\gamma_0},\widehat{\delta_0}))$ 
of $M_A(P)(\gamma,\delta)$, computed in floating point arithmetic with {\tt polyeig}, two quantities are computed:  $Q_{\eta,\mathrm{right}}^{r}$ (from the expression \eqref{eq.exactbackquot} with the weights in Definition \ref{def.weigths-back}(3)) and the upper bound for this quotient in Theorem \ref{eta-bounds}, which depends only on $\textrm{cond}_\infty(A)$, $\rho$, and $Y_2$. These two quantities are shown in the right plot of Figure \ref{fig4} as functions of $\rho$. The same behaviour as in the right plot of Figure \ref{fig1} is observed and similar comments remain valid. Therefore, it can be deduced that the quotients $Q_{\eta,\mathrm{right}}^{r}$ of the backward errors  typically grow proportionally to $\rho$. 

Finally, we report that, for the quotients of backward errors $Q_{\eta,\mathrm{right}}^{p}$, we have also performed an experiment analogous to the Experiment 3. The corresponding plots are not presented in this paper for brevity. However, we stress that the plot corresponding to the degree $k=1$ is remarkably different from the left plot in Figure \ref{fig2}, since it shows that $Q_{\eta,\mathrm{right}}^{p}$ typically increases proportionally to $\mathrm{cond}_{\infty}(A)$  and, therefore, no difference of behavior is observed in this respect between the quotients of backward errors for degrees $k=1$ and $k\geq 2$. This fact was pointed out and explained in Remark \ref{remark-back}.

\begin{figure}[ht]
	\centering
	\includegraphics[width=0.8\textwidth]{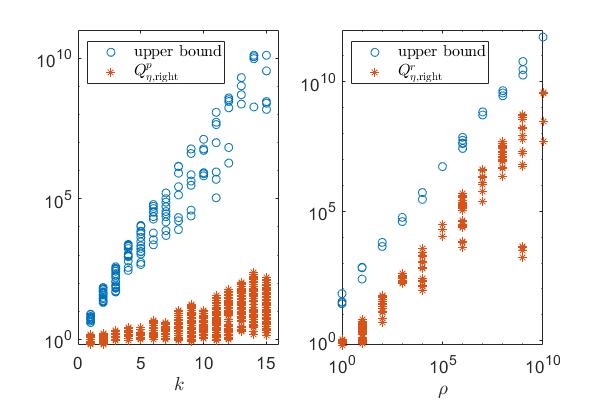}
	\caption{On the left results of Experiment 4, i.e., plot of $Q_{\eta,\mathrm{right}}^{p}$ versus the degree $k$ for M\"{o}bius transformations induced by matrices $A$ such that $\mathrm{cond}_2(A)=1$. On the right results of Experiment 5, i.e., plot of $Q_{\eta,\mathrm{right}}^{r}$ versus $\rho$ for M\"{o}bius transformations of matrix polynomials with degree $2$ induced by matrices $A$ such that $\mathrm{cond}_2(A)=1$.}
	\label{fig4}
\end{figure}

\section{Conclusions and future work} \label{sec.conclusions}

In this paper, we have studied the influence of M\"obius transformations on the (Stewart-Sun) eigenvalue condition number and backward errors of approximate eigenpairs of regular homogeneous matrix polynomials. More precisely,  we have given sufficient conditions, independent of the  eigenvalue, for the  condition number of a simple eigenvalue of a polynomial $P$ and the condition number of the associated eigenvalue   of a M\"obius transform of $P$ to be close. Similarly, we have given sufficient conditions for the backward error of an approximate eigenpair of a M\"obius transform of $P$ and the associated approximate eigenpair of $P$ to be close. In doing this analysis, we considered three variants of the Stewart-Sun condition number and of backward errors, depending on the selection of  weights involved in their definitions, that we called absolute, relative with respect to the norm of the polynomial, and relative. 

The most important conclusion of our study is that in the  relative-to-the-norm-of-the-polynomial case, if the matrix $A$ that defines the M\"obius transformation is well-conditioned and the degree of $P$ is moderate, then the M\"obius transformation preserves approximately the conditioning of the simple eigenvalues of $P$, and  the backward errors of the computed eigenpairs of $P$ are similar to the backward errors of the computed eigenpairs of $M_A(P)$. In the relative case, these conclusions  hold as well if, additionally, we assume that the matrix coefficients of $P$ (resp., the matrix coefficients  of   $M_A(P)$) have similar norms. Furthermore, we have provided some insight on the behavior of the quotients of eigenvalue condition numbers when the matrix $A$ defining the M\"obius transformation is ill-conditioned. Our study shows that, in this case, a significantly different typical behavior of the quotients of eigenvalue condition numbers can be expected when the matrix polynomial has degree 1, 2 or larger than 2. 

We must point out that the simple sufficient conditions for the approximate preservation of the eigenvalue condition numbers after the application of a M\"obius transformation to a homogeneous matrix polynomial cannot be immediately extrapolated to the non-homogeneous case, which will be studied in a separate paper. In this case, special attention must be paid to  eigenvalues with very large modulus or modulus close to $0$.

In this paper, we have only considered the effect of M\"obius transformations on the condition numbers of simple eigenvalues of a matrix polynomial. An interesting future line of research may be to extend our results to multiple eigenvalues by taking as starting point the condition numbers defined in \cite{mult1,mult2}. 

As explained in the introduction, in some relevant applications, the M\"obius transformations are used to compute invariant or deflating subspaces associated with eigenvalues with 
certain properties. Thus, studying how a M\"obius transformation affects the condition numbers of eigenvectors and invariant/deflating subspaces is an interesting problem that we will also address separately.

\end{document}